\documentclass{article}

\usepackage{arxiv}

\usepackage[utf8]{inputenc} % allow utf-8 input
\usepackage[T1]{fontenc}    % use 8-bit T1 fonts
\usepackage[colorlinks,citecolor=blue,urlcolor=blue,linkcolor=magenta]{hyperref}
\usepackage{url}            % simple URL typesetting
\usepackage{booktabs}       % professional-quality tables
\usepackage{nicefrac}       % compact symbols for 1/2, etc.
\usepackage{microtype}      % microtypography
\usepackage{lipsum}		% Can be removed after putting your text content
\usepackage{graphicx}
\usepackage{natbib}
\usepackage{doi}

\usepackage{mathrsfs}
\usepackage{amsthm,amssymb,amsfonts}
\usepackage{mathtools}
\usepackage{array}
\usepackage{bbm}
\usepackage{bm,upgreek}
\usepackage{enumerate}
\usepackage{caption} 
\usepackage{subcaption}
\usepackage{longtable}
\usepackage{indentfirst}
\usepackage[mathscr]{eucal}
\usepackage{xifthen}
\usepackage{nomencl}
\usepackage{comment}
\usepackage{tagging}
\usepackage[dvipsnames]{xcolor}
\usepackage{systeme}

\usepackage{mathrsfs}%
\usepackage[title]{appendix}%
\usepackage{textcomp}%
\usepackage{manyfoot}%
\usepackage{listings}%

\mathtoolsset{showonlyrefs}
%\renewcommand{\(}{\begin{math} \,}
%\renewcommand{\)}{\,\end{math}}

% \DeclareMathSymbol{}

\newcommand{\eqset}{\coloneqq}

\renewcommand{\Gamma}{\varGamma}
\renewcommand{\Pi}{\varPi}
\renewcommand{\Delta}{\varDelta}
\renewcommand{\Psi}{\varPsi}
\renewcommand{\Phi}{\varPhi}
\renewcommand{\Theta}{\varTheta}
\renewcommand{\Omega}{\varOmega}
\renewcommand{\Xi}{\varXi}
\renewcommand{\Upsilon}{\varUpsilon}

\let\norm\relax
\DeclarePairedDelimiter{\norm}{\lVert}{\rVert}
\let\abs\relax
\DeclarePairedDelimiter{\abs}{\lvert}{\rvert}

\newcommand*{\R}{\mathbb{R}}

\renewcommand*{\H}{\mathbb{H}}

\DeclareMathOperator{\diag}{diag}
\DeclareMathOperator{\tr}{tr}

\renewcommand*{\P}{\mathbb{P}}
\DeclareMathOperator{\E}{\mathbb{E}}

\newcommand*{\F}{\mathscr{F}}

\DeclareMathOperator{\ind}{\mathbb{I}}
\DeclareMathOperator{\sgn}{sign}

\newcommand*{\eps}{\varepsilon}
\newcommand*{\lmax}{\lambda_{\max}}

\newcommand*{\Q}{\mathbf{Q}}

\renewcommand{\bar}[1]{\overline{#1}}
\renewcommand{\tilde}[1]{\widetilde{#1}}

\newcommand{\pa}{\psi_{\alpha}}
\newcommand{\lm}{\lambda}
\newcommand{\sg}{\sigma}
\newcommand{\lr}[1]{\left(#1 \right)}
\newcommand{\lrc}[1]{\left\{#1 \right\}}
\newcommand{\lrs}[1]{\left[#1 \right]}
\newcommand{\elr}[1]{\exp\left\{#1 \right\}}

\newcommand{\x}{\mathrm{x}}

\newcommand{\az}{\alpha z^{\alpha}}
\newcommand{\azh}{2 \alpha z^{\alpha}}

\newcommand{\kz}{Kz}

\newcommand{\kzss}{\left( \frac{Kz}{\sigma}\right)^2}

\newcommand{\xx}{\mathrm{x}}
\newcommand{\logg}{\hspace{0.2em}\underline{\log}\hspace{0.2em}}

\newcommand{\X}{\mathbf{X}}
\newcommand{\A}{\mathbf{A}}
\newcommand{\Y}{\mathbf{Y}}
\newcommand{\Ii}{\mathbf{I}}
\newcommand{\Z}{\mathbf{Z}}
\newcommand{\Hh}{\mathbf{H}}
\newcommand{\V}{\mathbf{V}}
\newcommand{\Sg}{\boldsymbol{\Sigma}}

\newcommand{\Sm}{\mathbf{S}}

\newtheorem{theorem}{Theorem}[section]% meant for section-wise numbers
%% optional argument [theorem] produces theorem numbering sequence instead of independent numbers for Proposition
\newtheorem{proposition}[theorem]{Proposition}% 
% \newtheorem{proposition}{Proposition}[section]
% to get separate numbers for theorem and proposition, etc.

\newtheorem{corollary}[theorem]{Corollary}
\newtheorem{lemma}[theorem]{Lemma}

\newtheorem{remark}[theorem]{Remark}%

\title{Bernstein-type and Bennett-type inequalities for unbounded matrix martingales}

%\date{September 9, 1985}	% Here you can change the date presented in the paper title
%\date{} 					% Or removing it

\author{\hspace{1mm}Alexey Kroshnin \\
    Weierstrass Institute for\\
    Applied Analysis and Stochastics\\
	HSE University\\
	\texttt{kroshnin@wias-berlin.de} \\
	%% examples of more authors
	\And
    \hspace{1mm}Alexandra Suvorikova \\
	Weierstrass Institute for\\
    Applied Analysis and Stochastics\\
	HSE University\\
	\texttt{suvorikova@wias-berlin.de} \\
	%% \AND
	%% Coauthor \\
	%% Affiliation \\
	%% Address \\
	%% \texttt{email} \\
	%% \And
	%% Coauthor \\
	%% Affiliation \\
	%% Address \\
	%% \texttt{email} \\
	%% \And
	%% Coauthor \\
	%% Affiliation \\
	%% Address \\
	%% \texttt{email} \\
}

% Uncomment to remove the date
%\date{}

% Uncomment to override  the `A preprint' in the header
%\renewcommand{\headeright}{Technical Report}
%\renewcommand{\undertitle}{Technical Report}

%%% Add PDF metadata to help others organize their library
%%% Once the PDF is generated, you can check the metadata with
%%% $ pdfinfo template.pdf
\hypersetup{
pdftitle={Bernstein and Bennett-type inequalities for unbounded matrix martingales},
pdfsubject={math.PR, math.ST},
pdfauthor={Alexey Kroshnin, Alexandra Suvorikova},
pdfkeywords={concentration, Bernstein inequality, Bennett inequality, martingales, Orlicz norm, empirical Bernstein inequality, McDiarmid inequality},
}

\begin{document}
\maketitle

\begin{abstract}
    We derive explicit Bernstein-type and Bennett-type concentration inequalities for matrix-valued martingale processes with unbounded observations from the Hermitian space $\H(d)$. Specifically, we assume that the $\pa$-Orlicz~(quasi-)norms of their difference process are bounded for some $\alpha > 0$. Further, we generalize the obtained result by replacing the ambient dimension $d$ with the effective rank of the covariance of the observations. To illustrate the applicability of the results, we prove several corollaries, including an empirical version of Bernstein's inequality and an extension of the bounded difference inequality, also known as McDiarmid's inequality.
\end{abstract}

% keywords can be removed
\keywords{Bernstein inequality \and Bennett inequality \and martingales \and Orlicz norm \and effective rank \and empirical Bernstein inequality \and McDiarmid inequality}
% \and empirical Bernstein

\section{Introduction}\label{sec:intro}

Non-asymptotic concentration inequalities play an essential role in a wide variety of fields, including probability theory, statistics \citep{arcones1995bernstein}, graph theory \citep{krebs2018bernstein}, machine learning \citep{lopez2014randomized}, theoretical computer science \citep{tolstikhin2013pac}, quantum statistics \citep{girotti2023concentration}, etc. 
These inequalities provide crucial probabilistic bounds that facilitate rigorous analysis in both theoretical and applied contexts. 
Key references for comprehensive surveys include the works by \citet{ledoux1991probability, koltchinskii2011oracle, boucheron2013concentration}.

This paper explores Bernstein-type and Bennett-type inequalities, which are pivotal in various research domains. 
These concentration inequalities play a crucial role in the analysis of weakly dependent observations \citep{merlevede2009bernstein, banna2016bernstein}, martingales \citep{dzhaparidze2001bernstein, tropp2011freedman}, stochastic and empirical processes \citep{bechar2009bernstein, baraud2010a, hanyuan2017a}, and the concentration of matrices and operators \citep{mackey2014matrix, minsker2017some}.

\subsection{Related works}

In the literature survey, we aim to highlight the significant milestones in the development of Bernstein-type bounds. 
The survey is structured chronologically, providing a comprehensive understanding of the field's evolution.

\paragraph{Scalar case.} 
The early results, dating back to the beginning of the 20th century, deal mainly with bounded observations. 

The celebrated Bernstein's inequality---formulated by Sergei Bernstein in the late 1920s \citep{bernvstein1927theory}---stands as a cornerstone in the theory of concentration inequalities. 
It guarantees an exponential decay rate for the tail probabilities of the sum of independent bounded random variables.
 
\begin{proposition}[Bernstein's inequality (bounded case)]
\label{prop:Bernst}
    Let $X_1, \ldots, X_n$ be independent random variables such that
    \[
    \E X_i = 0, 
    \quad
    X_i \le K~\text{a.s.,}
    \quad
    \sigma^2 \eqset \sum_{i=1}^n \E X^2_i.
    \]
    Then for all $t > 0$
    \[
    \P\lr{\sum_{i=1}^n X_i \ge t} \le 
    \elr{- \frac{t^2}{2 \left(\sg^2 + \frac{K t}{3}\right)}}.
    \]
\end{proposition}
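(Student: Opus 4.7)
The plan is to prove Proposition \ref{prop:Bernst} by the standard Chernoff/exponential-moment method combined with a controlled bound on the moment generating function that uses only the one-sided bound $X_i \le K$.

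First I would apply Markov's inequality to $\exp(\lambda S_n)$, where $S_n = \sum_{i=1}^n X_i$ and $\lambda > 0$ is a free parameter to be optimized at the end. By independence this gives
\[
\P(S_n \ge t) \le e^{-\lambda t} \prod_{i=1}^n \E e^{\lambda X_i}.
\]
So the whole task reduces to a sharp upper bound on each $\E e^{\lambda X_i}$ under the assumptions $\E X_i = 0$ and $X_i \le K$ almost surely.

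The core lemma I would prove is: for any centered random variable $X$ with $X \le K$ a.s., and for every $0 < \lambda < 3/K$,
\[
\E e^{\lambda X} \le \exp\lr{\frac{\lambda^2\,\E X^2/2}{1 - \lambda K/3}}.
\]
The idea is to write $e^y - 1 - y = y^2 \varphi(y)$ with $\varphi(y) = (e^y - 1 - y)/y^2$ (extended by continuity to $\varphi(0) = 1/2$), and to use the well-known fact that $\varphi$ is nondecreasing on $\R$. Since $\lambda X \le \lambda K$ almost surely, pointwise monotonicity gives
\[
e^{\lambda X} - 1 - \lambda X \le (\lambda X)^2 \varphi(\lambda K).
\]
Taking expectations and using $\E X = 0$ yields $\E e^{\lambda X} \le 1 + \lambda^2 \sigma_i^2 \varphi(\lambda K)$. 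The final ingredient is the elementary estimate $\varphi(y) \le \frac{1}{2(1 - y/3)}$ for $0 \le y < 3$, which follows by comparing the series $\sum_{k\ge 0} y^k/(k+2)!$ term-by-term with the geometric series $\sum_{k\ge 0} y^k/(2 \cdot 3^k)$. Using $1 + u \le e^u$ then gives the claimed exponential form.

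Combining the lemma with the Chernoff step leads to
\[
\P(S_n \ge t) \le \exp\lr{- \lambda t + \frac{\lambda^2 \sigma^2/2}{1 - \lambda K/3}},
\]
valid for all $0 < \lambda < 3/K$. The last step is to optimize in $\lambda$; setting the derivative of the exponent to zero yields the explicit choice $\lambda^\star = t / (\sigma^2 + Kt/3)$, which automatically satisfies $\lambda^\star K < 3$, and substituting it gives exactly the stated bound $\exp\bigl(-t^2 / (2(\sigma^2 + Kt/3))\bigr)$. I expect no serious obstacle here: the only delicate point is the monotonicity of $\varphi$, which is a classical one-line calculus fact, and the rest is algebraic optimization.
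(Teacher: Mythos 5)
Your proof is correct, but note that the paper itself does not prove Proposition~\ref{prop:Bernst}: it is stated in the literature survey as a classical result with a citation to Bernstein, so there is no in-paper argument to compare against. What you give is the standard Chernoff--Cram\'er proof, and every step checks out: the monotonicity of $\varphi(y)=(e^y-1-y)/y^2$ is exactly the paper's Proposition~\ref{prop:phi_properties}, the one-sided bound $X\le K$ suffices precisely because you only evaluate $\varphi$ at $\lambda K$ from above, the series comparison $(k+2)!\ge 2\cdot 3^k$ is valid, and substituting $\lambda=t/(\sigma^2+Kt/3)$ into $-\lambda t+\frac{\lambda^2\sigma^2/2}{1-\lambda K/3}$ does yield exactly $-t^2/\bigl(2(\sigma^2+Kt/3)\bigr)$. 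It is also worth observing that your argument is the scalar prototype of the machinery the paper actually develops: the matrix Chernoff step via Lieb's theorem in Proposition~\ref{prop:mart_bound} and the MGF control in Lemma~\ref{lemma:mgf_X_bound} generalize precisely this decomposition $e^{\lambda X}=1+\lambda X+(\lambda X)^2\varphi(\lambda X)$.

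One small inaccuracy: $\lambda^\star=t/(\sigma^2+Kt/3)$ is \emph{not} the stationary point of the exponent $-\lambda t+\frac{\lambda^2\sigma^2/2}{1-\lambda K/3}$ (the true minimizer is strictly smaller and yields the sharper bound $\exp\lr{-\frac{\sigma^2}{c^2}h_1(ct/\sigma^2)}$ with $h_1(u)=1+u-\sqrt{1+2u}$ and $c=K/3$). This does not affect validity --- the Chernoff bound holds for every admissible $\lambda$, and your choice satisfies $\lambda^\star K<3$ and evaluates to exactly the claimed exponent --- but the phrase ``setting the derivative to zero yields'' should be replaced by ``choosing''.
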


Note that Bernstein also proposed going beyond the bounded case and considered the following moment bounds,
\begin{equation}
\label{def:moment_condition}
    \E X^p_i \le \frac{p!}{2} U^{p-2}_i \sg^2_i, \quad p = 2, 3, \dots
\end{equation}
with $\sigma^2_i = \E X^2_i$, and $U_i > 0$ being some constant.
This assumption is now known as Bernstein's moment condition. 
It ensures the sub-gamma behavior of $X_i$, see Corollary~5.2 in \cite{zhang2020concentration}. 
However, further research on the unbounded case did not attract much attention until the beginning of the 21st century. 

The next famous result concerning bounded observations---derived by George Bennett in 1962 \citep{bennett1962probability}---presents a sharper version of Proposition~\ref{prop:Bernst}.
 
\begin{proposition}[Bennett's inequality]
\label{prop:Benn}
    Under assumptions of Proposition~\ref{prop:Bernst}, it holds for all $t > 0$ that
    \[
    \P\lr{\sum_{i=1}^n X_i \ge t} \le \elr{- \frac{\sg^2}{K^2} h\lr{\frac{K t}{\sg^2}}},
    \]
    where $h(x) \eqset (1 + x) \ln(1 + x) - x$ for all $x\ge 0$.
\end{proposition}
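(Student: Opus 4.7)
The plan is to apply the classical Cram\'er--Chernoff (exponential moment) method together with a sharp moment generating function (MGF) estimate that exploits the one-sided boundedness $X_i \le K$ and the second moment information $\sg_i^2 = \E X_i^2$. This is the textbook route, tailored here so that the MGF bound is sharp enough to produce the Bennett (rather than Bernstein) exponent.

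First, for any $\lm > 0$, Markov's inequality applied to $\elr{\lm \sum_{i=1}^n X_i}$ combined with independence gives
\[
\P\lr{\sum_{i=1}^n X_i \ge t} \le e^{-\lm t} \prod_{i=1}^n \E \elr{\lm X_i}.
\]
The key technical step is the per-summand bound
\[
\E \elr{\lm X_i} \le \elr{\frac{\sg_i^2}{K^2} \lr{e^{\lm K} - 1 - \lm K}}.
\]
I would derive this by introducing the auxiliary function $\phi(x) \eqset (e^{\lm x} - 1 - \lm x)/x^2$ (extended by continuity at $0$) and showing that $\phi$ is nondecreasing on $\R$ for $\lm > 0$. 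The easiest route is the Taylor series $\phi(x) = \sum_{k \ge 0} \lm^{k+2} x^k / (k+2)!$ together with a short derivative computation. Monotonicity then gives $e^{\lm X_i} - 1 - \lm X_i \le (X_i^2/K^2)(e^{\lm K} - 1 - \lm K)$ pointwise; taking expectations, using $\E X_i = 0$, and applying $1 + u \le e^u$ yields the displayed MGF bound.

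Multiplying the per-summand bounds and setting $\sg^2 = \sum_i \sg_i^2$ leads to
\[
\P\lr{\sum_{i=1}^n X_i \ge t} \le \elr{-\lm t + \frac{\sg^2}{K^2}\lr{e^{\lm K} - 1 - \lm K}}.
\]
It remains to optimize over $\lm > 0$. Differentiating the exponent with respect to $\lm$ and equating to zero produces the explicit optimum $\lm^\ast = K^{-1} \ln(1 + Kt/\sg^2)$. Substituting this back and collecting terms turns the exponent into $-\frac{\sg^2}{K^2}\lrs{(1 + Kt/\sg^2)\ln(1 + Kt/\sg^2) - Kt/\sg^2} = -\frac{\sg^2}{K^2} h(Kt/\sg^2)$, which is exactly the claim.

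The only non-routine ingredient is the MGF estimate; the optimization is elementary calculus. Since Proposition~\ref{prop:Benn} is a classical result, I do not expect any real obstacle---the proof is included here to fix the argument that the later matrix-martingale generalization will have to emulate, where the analogous MGF step is the genuinely hard part.
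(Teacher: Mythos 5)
Your proof is correct and is the standard Chernoff-method argument for Bennett's inequality; the paper itself does not prove Proposition~\ref{prop:Benn} (it is cited from Bennett, 1962), but your route is precisely the scalar, independent-summand specialization of the machinery the paper uses later --- the Markov/Chernoff step corresponds to Proposition~\ref{prop:mart_bound}, the MGF estimate rests on the monotonicity of $\phi(t)/t^2$ recorded in Proposition~\ref{prop:phi_properties}, and the optimization over $\lm$ is the unconstrained case of Lemma~\ref{lemma:bound_on_g_inv}. One small caution: the positive-coefficient Taylor series alone only gives monotonicity of $(e^{\lm x}-1-\lm x)/x^2$ for $x\ge 0$, so the ``short derivative computation'' you mention (or the integral representation $\int_0^1(1-s)e^{\lm s x}\,ds$) is genuinely needed to cover $x<0$, which is essential since the $X_i$ are only bounded above.
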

 
Alongside independent observations, the dependent case also gained attention. 
So, in 1975, David Freedman \citep{freedman1975tail} derived the famous martingale extension of Proposition~\ref{prop:Bernst}.

Almost in parallel, in 1976, Vadim Yurinskii generalized Proposition~\ref{prop:Bernst} to the case of random variables in Banach spaces. 
He assumed the norm of observations to satisfy Bernstein's moment condition~\eqref{def:moment_condition}  \citep{yurinsky1976exponential}.

\paragraph{Matrix case.} Joel Tropp, in 2011, generalized Freedman's result to the case of matrix-valued martingales (see Proposition~\ref{prop:tropp_bound}). 
One year later, he got a result similar to Yurinsky's one. 
Namely, he applied assumption~\eqref{def:moment_condition} to the matrix-valued case (see Proposition~\ref{prop:tropp_moment}).

In the following, we denote as $\lmax(\X)$ the largest eigenvalue of $\X$, as $\norm{X}$ the operator norm of $X$, and as $\H(d)$ the space of $d \times d$ Hermitian matrices.
 
\begin{proposition}[Theorem 3.1, \cite{tropp2011freedman}]
\label{prop:tropp_bound}
    Let $\X_1, \dots, \X_n \in \H(d)$ be a sequence adapted to a filtration $\F_1 \subset \F_2 \subset \dots \subset \F_n $. 
    Let $\X_k$ satisfy for all $k = 1, \dots, n$
    \[
    \E[\X_k | \F_{k-1}] = 0, 
    \quad
    \lmax(\X_k) \le K~\text{a.s.}
    \]
    Set for all $k = 1, \dots, n$
    \[
    \Sm_k  \eqset \sum^k_{i=1} \X_i, 
    \quad
    \Sg_k \eqset \sum^k_{i=1}  \E\left[\X^2_i \middle| \F_{i-1}\right].
    \]
    Then, for all $t \ge 0$ and $\sg^2 > 0$,
    \[
    \P \lr{\exists k \ge 0: \lmax(\Sm_k) \ge t~~\text{and}~~ \norm{\Sg_k} \le \sg^2} \le d \elr{-\frac{\sg^2}{K^2} h\left(\frac{K t}{\sg^2} \right)}.
    \]
\end{proposition}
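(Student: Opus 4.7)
The approach is the Laplace transform / matrix cumulant method: bound the tail at a stopping time via a $\tr\exp$-based supermartingale, and optimize a dual parameter $\theta > 0$ to obtain the Bennett rate. The three ingredients are (i) a matrix cumulant bound, (ii) a Lieb-based supermartingale, and (iii) a stopping-time argument to remove the existential quantifier over $k$.

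First, I would establish a matrix master inequality for the conditional MGF. The scalar bound $e^{\theta x} \le 1 + \theta x + g(\theta) x^2$, valid for $x \le K$ and $\theta > 0$ with $g(\theta) \eqset (e^{\theta K} - \theta K - 1)/K^2$, transfers eigenvalue-wise under $\lmax(\X_k) \le K$ to $e^{\theta \X_k} \preceq \Ii + \theta \X_k + g(\theta) \X_k^2$. Taking $\F_{k-1}$-conditional expectations, using $\E[\X_k \mid \F_{k-1}] = 0$ together with $\Ii + \A \preceq e^{\A}$ for $\A \succeq 0$, and invoking operator monotonicity of $\log$, one obtains
\[
\log \E\lrs{e^{\theta \X_k} \mid \F_{k-1}} \preceq g(\theta)\, \E\lrs{\X_k^2 \mid \F_{k-1}}.
\]

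Second, I would construct the supermartingale $Y_k \eqset \tr \exp(\theta \Sm_k - g(\theta) \Sg_k)$, noting $Y_0 = d$. Because $\Sm_{k-1}$, $\Sg_{k-1}$ and $\E[\X_k^2 \mid \F_{k-1}]$ are all $\F_{k-1}$-measurable, Lieb's concavity theorem---the map $\A \mapsto \tr \exp(\Hh + \log \A)$ is concave on positive-definite $\A$---combined with Jensen's inequality applied conditionally yields
\[
\E[Y_k \mid \F_{k-1}] \le \tr \exp\!\lr{\theta \Sm_{k-1} - g(\theta) \Sg_k + \log \E\lrs{e^{\theta \X_k} \mid \F_{k-1}}}.
\]
Substituting the master inequality and using trace monotonicity ($\tr e^{\A} \le \tr e^{\B}$ whenever $\A \preceq \B$, by Weyl's eigenvalue comparison) collapses the right-hand side to $Y_{k-1}$.

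Third, I would handle the supremum over $k$ by a stopping argument. Let $\tau \eqset \inf\{k \ge 1 : \lmax(\Sm_k) \ge t \text{ and } \norm{\Sg_k} \le \sg^2\}$; the event of interest equals $\{\tau < \infty\}$, and on this event $\Sg_\tau \preceq \sg^2 \Ii$, whence trace monotonicity gives
\[
Y_\tau \ge e^{-g(\theta) \sg^2}\, \tr e^{\theta \Sm_\tau} \ge e^{-g(\theta) \sg^2}\, e^{\theta t}.
\]
Optional stopping applied to $Y_{\tau \wedge n}$ (which satisfies $\E Y_{\tau \wedge n} \le Y_0 = d$) together with Fatou's lemma yields $\P(\tau < \infty) \le d \exp(g(\theta) \sg^2 - \theta t)$, and the standard Bennett minimization $\inf_{\theta > 0} \{g(\theta) \sg^2 - \theta t\} = -\frac{\sg^2}{K^2}\, h(Kt/\sg^2)$ finishes the proof. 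The genuine obstacle is the supermartingale step: the scalar argument factorizes $\E e^{\theta S_k}$ across the filtration, which fails for matrices because $e^{\A + \A'} \ne e^{\A} e^{\A'}$ unless $\A$ and $\A'$ commute; Lieb's concavity theorem is exactly the non-commutative substitute that restores an inductive handle on the cumulant, at the cost of the dimensional factor $d = \tr \Ii$.
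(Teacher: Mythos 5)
Your proof is correct and is essentially the argument underlying this cited result: the Lieb--Jensen supermartingale, the Freedman-type cumulant bound $\log\E[e^{\theta\X_k}\mid\F_{k-1}]\preccurlyeq g(\theta)\E[\X_k^2\mid\F_{k-1}]$, and the Bennett optimization are exactly the engine the paper develops in Propositions~\ref{prop:mart_markov} and~\ref{prop:mart_bound}. The only cosmetic difference is that you control the existential quantifier over $k$ via a stopping time and optional stopping, whereas the paper runs Doob's maximal inequality on the non-negative supermartingale $\tr e^{\Z_k-\A}$; the two are interchangeable here.
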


\begin{proposition}[Theorem~6.2, \cite{tropp2012user}]
\label{prop:tropp_moment}
    Let independent random matrices $\X_1, \dots, \X_n \in \H(d)$ satisfy for all $k = 1, \dots, n$, all $p = 2, 3, \dots$, with some $K > 0$ and positive-definite matrices $\Sg_k$
    \[
    \E \X_k = 0,
    \quad
    \E \X^p_k \preccurlyeq \frac{p!}{2} K^{p-2} \Sg_k.
    \]
    Let $\sg^2 \eqset \norm*{\sum^n_{k = 1} \Sg_k}. $ Then for all $t \ge 0$
    \[
    \P \lr{\lmax\left(\sum^n_{k=1} \X_k\right) \ge t} \le d \elr{- \frac{t^2}{2 (\sg^2 + K t)}}.
    \]  
\end{proposition}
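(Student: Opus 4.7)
The plan is to follow the matrix Laplace transform / matrix Chernoff machinery developed by Ahlswede--Winter and refined by Tropp. First I would invoke the master tail inequality: for any $\theta > 0$,
\[
\P\lr{\lmax\left(\sum_{k=1}^n \X_k\right) \ge t} \le e^{-\theta t} \cdot \E \tr \exp\left(\theta \sum_{k=1}^n \X_k\right).
\]
For independent summands, Lieb's concavity theorem yields the subadditivity of cumulant generating functions in the trace-exponential sense,
\[
\E \tr \exp\left(\theta \sum_{k=1}^n \X_k\right) \le \tr \exp\left(\sum_{k=1}^n \log \E\, e^{\theta \X_k}\right),
\]
so the whole problem reduces to bounding each matrix mgf $\E\, e^{\theta \X_k}$ in the semidefinite order.

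Next I would expand the matrix exponential and use the moment condition. Since $\E \X_k = 0$,
\[
\E\, e^{\theta \X_k} = \Ii + \sum_{p=2}^{\infty} \frac{\theta^p}{p!}\, \E \X_k^p \preccurlyeq \Ii + \frac{\theta^2}{2} \Sg_k \sum_{p=0}^{\infty} (\theta K)^p = \Ii + \frac{\theta^2 \Sg_k}{2(1 - \theta K)},
\]
valid for $0 < \theta < 1/K$. Operator monotonicity of the logarithm combined with the scalar inequality $\log(1+x) \le x$ (applied spectrally) then gives
\[
\log \E\, e^{\theta \X_k} \preccurlyeq \frac{\theta^2 \Sg_k}{2(1 - \theta K)}.
\]
Summing over $k$ and using monotonicity of the trace exponential (if $\A \preccurlyeq \B$ then $\tr e^{\A} \le \tr e^{\B}$), together with the crude bound $\tr e^{\V} \le d\, e^{\lmax(\V)}$, produces
\[
\E \tr \exp\left(\theta \sum_{k=1}^n \X_k\right) \le d \exp\left(\frac{\theta^2 \sg^2}{2(1 - \theta K)}\right).
\]

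Plugging back into the Laplace transform bound yields, for all $\theta \in (0, 1/K)$,
\[
\P\lr{\lmax\left(\sum_{k=1}^n \X_k\right) \ge t} \le d \exp\left(-\theta t + \frac{\theta^2 \sg^2}{2(1 - \theta K)}\right).
\]
The final step is scalar optimization: choosing $\theta = t / (\sg^2 + Kt) \in (0, 1/K)$ turns the exponent into $-t^2 / (2(\sg^2 + Kt))$, which is exactly the claimed bound.

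I expect the main technical obstacle to lie in justifying the transfer from the scalar moment estimate to the semidefinite bound on $\E\, e^{\theta \X_k}$ in a rigorous way: the power-series manipulation above is formally correct, but one must be careful that the partial sums are monotone in the Loewner order and that convergence holds in operator norm under the assumption $\theta K < 1$. The other delicate point is the application of operator monotonicity of $\log$ and of the trace exponential, which relies on Lieb's theorem; everything else — the Chernoff step and the scalar optimization of $\theta$ — is routine.
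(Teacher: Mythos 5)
Your proof is correct and is essentially the argument in the cited source (Tropp 2012, Theorem 6.2): the paper itself states this proposition without proof, and your route — master Laplace-transform bound, Lieb/subadditivity of the trace mgf, term-by-term application of the Bernstein moment condition to get $\log \E e^{\theta \X_k} \preccurlyeq \tfrac{\theta^2 \Sg_k}{2(1-\theta K)}$, then optimizing $\theta = t/(\sg^2+Kt)$ — is the standard one. The only point worth tightening is the justification that $\E e^{\theta \X_k}$ converges, which follows since the even-moment bounds control $\E\norm{\X_k}^{2q}$ via the trace and hence give $\E e^{\theta\norm{\X_k}} < \infty$ for $\theta K < 1$.
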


All the above results deal with bounded random variables or those satisfying Bernstein's moment condition. 
As one can see, the moment condition is too strong, especially in the case of random matrices. 

The current study focuses on the unbounded case. 
To introduce the setting, we briefly recall the concept of the Orlicz norm.
The Orlicz function we use is
\[
\pa(x) \eqset e^{x^{\alpha}} - 1, \quad \alpha > 0.
\]
The $\pa$-Orlicz (quasi-)norm of a real-valued random variable $X$ is
\begin{equation}\label{def:orlicz}
    \norm{X}_{\pa} \eqset \inf\lrc{t > 0 : \E \pa\lr{\frac{\abs{X}}{t}} \le 1 }.
\end{equation}

If $\alpha \ge 1$, $\norm{X}_{\pa}$ is a norm. 
In particular, if $\norm{X}_{\psi_{1}} < \infty$, $X$ is sub-exponential, and if $\norm{X}_{\psi_{2}} < \infty$, $X$ is sub-Gaussian. 
Moreover, if $0 < \alpha < 1$, $\norm{X}_{\pa}$ is a quasi-norm. 

In 2008, Radoslaw Adamczak got the concentration result for unbounded empirical processes \citep{adamczak2008tail}. 
Being applied to a particular case of a sum of independent observations, it yields a Bernstein-type deviation bound. 
The result holds under the assumption that the summands have finite $\pa$-Orlicz (quasi-)norm for $0 < \alpha \le 1$.

In 2011, Vladimir Koltchinskii obtained an extension of Proposition~\ref{prop:Bernst} for a sum of independent Hermitian matrices with bounded $\pa$-Orlicz norm.
 
\begin{proposition}[Theorem~2.7, \cite{koltchinskii2011oracle}]
\label{prop:koltch_indep}
    Let $\X_1, \dots, \X_n \in \H(d)$ be independent random matrices.
    Fix $\alpha \ge 1$. 
    Suppose, for all $k = 1, \dots, n$, and some $K > 0$,
    \begin{equation}
    \label{def:K_koltchinskii}
        \E \X_k = 0, 
        \quad
        \max \left(\norm*{\norm{\X_k}}_{\psi_{\alpha}}, 2 \sqrt{\E \norm{\X_k}^2}\right) 
        \le K.
    \end{equation}
    Set $  \sg^2 \eqset \norm*{\sum_{k=1}^n \E \X^2_k}.$ Then, there exists an absolute constant $C > 0$ such that, for all $t \ge 0$,
    \[
    \P\left(\norm*{\sum_{k=1}^n \X_k} \ge t\right) 
    \le 2 d \exp\lrc{-\frac{1}{C} \frac{t^2}{\sg^2 + t K \left(\log \frac{n K^2}{\sg^2}\right)^{1/\alpha}} }.
    \]
\end{proposition}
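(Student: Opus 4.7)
The plan is to reduce to the bounded matrix setting and then invoke Tropp's matrix Bernstein bound (Proposition~\ref{prop:tropp_moment}) via a truncation argument. Fix the level
\[
M \eqset \kappa K \lr{\log \tfrac{n K^2}{\sg^2}}^{1/\alpha},
\]
where $\kappa \ge 1$ is an absolute constant to be tuned, and split $\X_k = \X'_k + \X''_k$ with $\X'_k \eqset \X_k \iind{\norm{\X_k} \le M}$ and $\X''_k \eqset \X_k - \X'_k$. Set $\Y_k \eqset \X'_k - \E \X'_k$: these matrices are independent, centred, satisfy $\norm{\Y_k} \le 2M$ almost surely, and $\E \Y_k^2 \preccurlyeq \E (\X'_k)^2 \preccurlyeq \E \X_k^2$. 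Since $\E \X_k = 0$, $\E \X'_k = -\E \X''_k$, so on the event $\mathcal{E}^c \eqset \lrc{\forall k: \norm{\X_k} \le M}$ one has $\sum_k \X_k = \sum_k \Y_k - \sum_k \E \X''_k$.

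First I would bound the bad event by a union bound, Markov's inequality, and the definition~\eqref{def:orlicz} of the $\pa$-(quasi-)norm:
\[
\P(\mathcal{E}) \le 2 n \exp\lrc{-(M/K)^{\alpha}} = 2 n^{1-\kappa^\alpha} \lr{\sg^2/K^2}^{\kappa^\alpha},
\]
which is dominated by the target bound once $\kappa$ is large enough. Next I would control the bias $\bb{B} \eqset \sum_k \E \X''_k$ via Cauchy--Schwarz and the variance assumption $2 \sqrt{\E \norm{\X_k}^2} \le K$:
\[
\norm{\bb B} \le \sum_k \sqrt{\E \norm{\X_k}^2 \cdot \P(\norm{\X_k} > M)} \le n K \exp\lrc{-\tfrac{1}{2}(M/K)^\alpha},
\]
which decays as a large power of $\sg^2/(nK^2)$.

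Then I would handle the main term $\sum_k \Y_k$ by applying Proposition~\ref{prop:tropp_moment} to $\pm \Y_k$. A spectral argument shows that $\Y_k^p \preccurlyeq (2M)^{p-2} \Y_k^2$ for every $p \ge 2$ whenever $\norm{\Y_k} \le 2M$, so Bernstein's moment condition is met with $K \leftarrow 2M$ and $\Sg_k \eqset \E \Y_k^2 \preccurlyeq \E \X_k^2$, hence
\[
\P\lr{\norm*{\sum_k \Y_k} \ge s} \le 2 d \elr{-\frac{s^2}{2(\sg^2 + 2Ms)}}.
\]
Taking $s = t/2$ and assuming $t \ge 2 \norm{\bb B}$ (otherwise the inequality is trivial), the decomposition on $\mathcal{E}^c$ together with the triangle inequality yields
\[
\P\lr{\norm*{\sum_k \X_k} \ge t} \le \P\lr{\norm*{\sum_k \Y_k} \ge t/2} + \P(\mathcal{E}),
\]
and absorbing $\P(\mathcal{E})$ into the Bernstein bound produces the announced inequality.

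The main obstacle is the joint calibration of $\kappa$ (hence of $M$): it must simultaneously guarantee that $\P(\mathcal{E})$ is dominated by the Bernstein-type right-hand side and that $\norm{\bb B} \le t/2$ throughout the non-trivial range of $t$. The first constraint is what pins down the functional form $M \sim K (\log(nK^2/\sg^2))^{1/\alpha}$, which is precisely the origin of the logarithmic correction $\lr{\log(nK^2/\sg^2)}^{1/\alpha}$ in the statement; a universal choice of $\kappa$ then yields the absolute constant $C$.
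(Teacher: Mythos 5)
This proposition is quoted from \citet{koltchinskii2011oracle} and the paper itself gives no proof of it, so your argument has to stand on its own. It does not: there is a genuine gap at the very last step. Your truncation level $M = \kappa K \lr{\log\frac{nK^2}{\sg^2}}^{1/\alpha}$ does not depend on $t$, hence neither does your estimate $\P(\mathcal{E}) \le 2 n^{1-\kappa^\alpha}\lr{\sg^2/K^2}^{\kappa^\alpha}$, which is a fixed positive number. The right-hand side of the proposition tends to $0$ as $t \to \infty$, so for all sufficiently large $t$ it is strictly smaller than your bound on $\P(\mathcal{E})$, and the claim that $\P(\mathcal{E})$ ``is dominated by the target bound once $\kappa$ is large enough'' is false; no choice of $\kappa$ or of the absolute constant $C$ repairs a $t$-independent additive error against a bound that decays in $t$. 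Making $M$ depend on $t$ does not rescue the scheme either: forcing $2n e^{-(M/K)^\alpha} \le \exp\lrc{-t/(CKL)}$ with $L \eqset \lr{\log\frac{nK^2}{\sg^2}}^{1/\alpha}$ requires $M \gtrsim K\lr{t/(KL)}^{1/\alpha}$, and then the Bernstein denominator $\sg^2 + Mt$ grows superlinearly in $t$, so the exponent no longer scales like $t/(KL)$ (for $\alpha = 1$ it even saturates at a constant). The missing idea is that the truncated-away part $\sum_k \X''_k$ must not be controlled by a union bound over all $n$ summands: with high probability only $O(1)$ of the $\X_k$ exceed the threshold, so $\norm*{\sum_k \X''_k}$ behaves like $\max_k \norm{\X_k}$ on the exceedance event, whose $\pa$-norm is only of order $K(\log 2n)^{1/\alpha}$. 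Exploiting this via a Hoffmann--J\o rgensen/Talagrand-type argument (as in \citet{adamczak2008tail}, which is how Koltchinskii proceeds) is what yields a remainder tail that stays below $e^{-t/(CKL)}$ uniformly in $t$.

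The remaining ingredients of your proposal are sound: the spectral verification that $\E \Y_k^p \preccurlyeq (2M)^{p-2}\E\Y_k^2$ so that Proposition~\ref{prop:tropp_moment} applies to $\pm\Y_k$, and the Cauchy--Schwarz bound on the bias $\sum_k \E\X''_k$. One small loose end: for $t < 2\norm*{\sum_k \E\X''_k}$ the inequality is ``trivial'' only after checking that the bias is at most $\sg$ (which does hold for $\kappa \ge 1$, since $\sg^2 \le nK^2/4$), so that the right-hand side exceeds $1$ on that range for $C$ large enough.
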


\paragraph{Dimension-free bounds.} Modern advances in statistics and machine learning often require more efficient bounds in terms of dimension. Specifically, the informative signal in the data typically lies in a low-dimensional subspace. In this case, it is more beneficial to use bounds depending not on the ambient dimension of the observations $\X \in \H(d)$ but on the intrinsic dimension of variance $\Sg \in \H_+(d)$, 
\[
r(\Sg) \eqset \frac{\tr \Sg}{\norm*{\Sg}}.
\]
In some works, $r(\Sg)$ is referred to as the effective rank. 
A comprehensive introduction to the topic can be found in Chapter 7 by \cite{tropp2012user_nips}. In particular, Theorem~7.3.1 introduces matrix Bernstein's inequality in the bounded case, where the dimension $d$ is replaced (up to a multiplicative constant) with the intrinsic dimension $r(\Sg)$.

In 2017, Stanislav Minsker derived a dimension-free version of matrix Freedman's inequality.
\begin{proposition}[Theorem 3.2, \cite{minsker2017some} ]
\label{prop:Minsker}
    Let $\X_1, \dots, \X_n \in \H(d)$ be a sequence of martingale differences such that $\max_{i} \norm{\X_i} \le K$ almost surely for some $K > 0$. Denote $\Sg \eqset \sum^n_{i=1}  \E\left[\X^2_i \middle| \F_{i-1}\right].$ Then for any $t \ge \frac{1}{6}(K + \sqrt{K^2 + 36\sg^2})$,
    \[
    \P\lr{\norm*{\sum^n_{i=1} \X_i} > t, ~\lmax(\Sg) \le \sg^2}
    \le 50\tr\lr{\min\lrc{1, \frac{t}{K}\frac{\E\Sg}{\sg^2}}}\exp\lr{-\frac{t^2/2}{\sg^2 + \frac{tK}{3}}}.
    \]
\end{proposition}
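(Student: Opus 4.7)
The plan is to adapt the ``intrinsic dimension'' refinement of the matrix Laplace-transform method. The goal is to replace the dimension $d$ appearing in Proposition~\ref{prop:tropp_bound}, which comes from the crude bound $\lmax(\A) \le \tr \A$ for positive semidefinite $\A$, by a trace quantity sensitive to the effective rank of the predictable variation $\E \Sg$. The key technical device is a scalar test function $\phi_\theta:\R \to \R_+$ that grows like $e^{\theta x}$ for $x$ near $t$ but vanishes at least quadratically at the origin.

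Concretely, I would fix $\theta > 0$, take $\phi_\theta(x) = \cosh(\theta x) - 1$ together with the symmetrization $\P(\norm{\Sm_n} \ge t) \le 2\,\P(\lmax(\Sm_n) \ge t)$, and use the uniform sandwich $\phi_\theta(x) \le c\,\theta^2 x^2 e^{\theta |x|}$ and $\phi_\theta(t) \ge \tfrac{1}{2} e^{\theta t}$ valid once $\theta t \gtrsim 1$. The spectral mapping theorem and Markov's inequality then give
\[
\P\lr{\lmax(\Sm_n) \ge t,\ \lmax(\Sg) \le \sg^2}
\le \frac{1}{\phi_\theta(t)}\, \E\lrs{\tr \phi_\theta(\Sm_n)\, \iind{\lmax(\Sg) \le \sg^2}}.
\]

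To control the numerator I would combine Lieb's concavity theorem, following the martingale extension of Tropp used in Proposition~\ref{prop:tropp_bound}, with the scalar Bennett-type estimate $\E[e^{\theta \X_i} + e^{-\theta \X_i} - 2 \mid \F_{i-1}] \preccurlyeq \theta^2\, g(\theta K)\, \E[\X_i^2 \mid \F_{i-1}]$ where $g(x) = 2(e^x - 1 - x)/x^2$. Iterating through the martingale and using the quadratic behaviour of $\phi_\theta$ near $0$ yields
\[
\E\lrs{\tr \phi_\theta(\Sm_n)\, \iind{\lmax(\Sg) \le \sg^2}}
\lesssim \theta^2\, \tr(\E \Sg)\, \exp\lr{\theta^2 \sg^2 g(\theta K)},
\]
the crucial point being that $\tr \Ii = d$ is replaced by $\tr \E \Sg$ thanks to the quadratic vanishing of $\phi_\theta$ at the origin. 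Optimizing $\theta \asymp t/(\sg^2 + tK/3)$ in the classical Bennett fashion then produces the exponent $-t^2/[2(\sg^2 + tK/3)]$ and leaves a prefactor of order $\theta^2\, \tr(\E \Sg) \asymp \frac{t}{K}\cdot\frac{\tr \E \Sg}{\sg^2}$ in the large-deviation regime.

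The step I expect to be the main technical obstacle is upgrading this scalar prefactor to the matrix-valued expression $\tr\min\lrc{\Ii,\ \frac{t}{K}\frac{\E \Sg}{\sg^2}}$ with the explicit constant $50$. This requires a spectral refinement: one diagonalizes $\E \Sg$, splits its eigenvalues at the threshold $K\sg^2/t$, handles the ``large-eigenvalue'' directions by the dimension-free estimate above, and the ``small-eigenvalue'' directions by the classical Freedman--Tropp bound of Proposition~\ref{prop:tropp_bound} applied to each eigenspace contribution. Ensuring that the two regimes match consistently precisely when $t \ge \tfrac{1}{6}(K + \sqrt{K^2 + 36\sg^2})$ --- which is exactly where the Bernstein optimization enters the regime $\theta t \ge 1$ needed for the lower bound on $\phi_\theta(t)$ --- is what pins down the numerical constant and represents the most delicate bookkeeping in the proof.
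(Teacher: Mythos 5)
First, note that the paper does not prove Proposition~\ref{prop:Minsker}: it is quoted verbatim from \citet{minsker2017some} as background. The closest thing to a ``paper's own proof'' is the machinery of Lemma~\ref{lemma:mart_bound_dim_free} and Theorem~\ref{thm:dim_free_bounded}, which implement exactly the strategy you outline --- replace $\tr e^{\Z}$ by the trace of a test function vanishing quadratically at the origin (the paper uses $\phi(x)=e^x-x-1$ rather than $\cosh(\theta x)-1$), run the Lieb/Tropp supermartingale recursion, and convert $\tr(e^{\A}-\Ii)$ into an effective-rank prefactor via $\tr(e^{\A}-\Ii)\le r(\A)(e^{\lmax(\A)}-1)$. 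So your overall plan is the right one and is faithful to both Minsker's argument and the paper's own dimension-free proofs.

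Two concrete gaps remain. First, your Markov step silently moves the indicator $\iind{\lmax(\Sg)\le\sg^2}$ inside $\E\tr\phi_\theta(\Sm_n)$ and then iterates conditional expectations backwards through the martingale. This does not go through as written: $\Sg=\sum_i\E[\X_i^2\mid\F_{i-1}]$ is only $\F_{n-1}$-measurable, so the indicator is not predictable at intermediate times and cannot be pulled through $\E[\,\cdot\mid\F_{k-1}]$. One needs the nested predictable events $E_k=\{\sum_{i\le k}\Sg_i\preccurlyeq\Sg\}\in\F_{k-1}$ (as in Lemma~\ref{lemma:mart_bound_dim_free}) or an equivalent stopping-time truncation; this is precisely the point of that lemma and cannot be skipped. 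Second, your plan for producing $\tr\min\{1,\frac{t}{K}\frac{\E\Sg}{\sg^2}\}$ --- diagonalize $\E\Sg$ and apply Proposition~\ref{prop:tropp_bound} ``to each eigenspace contribution'' --- does not make sense, because the increments $\X_i$ need not commute with $\E\Sg$, so the matrix martingale cannot be decomposed along eigenspaces of $\E\Sg$. The $\min$ arises instead from a purely scalar inequality applied to the \emph{deterministic} matrix $\A\propto\E\Sg$ appearing in the numerator: since $\phi(x)/x$ is increasing, $\phi(\lambda_i(\A))\le\min\bigl\{1,\lambda_i(\A)/a\bigr\}\,\phi(a)$ for $\lambda_i(\A)\le a$, summed over the spectrum of $\A$ and compared with the denominator $\phi(\theta t)$; the threshold condition $t\ge\frac16(K+\sqrt{K^2+36\sg^2})$ (equivalently $t^2\ge\sg^2+tK/3$, i.e.\ $\theta t\ge1$ at the Bernstein-optimal $\theta$) is what makes this comparison and the constant work. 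With those two repairs your sketch becomes Minsker's proof; as it stands, the two steps you treat as routine are exactly where the argument would break.
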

Several years later, Egor Klochkov and Nikita Zhivotovskiy got a dimension-free Bernstein-type inequality for sub-exponential case. The proof technique relies on the tools from \cite{minsker2017some} and \cite{adamczak2008tail}. 
\begin{proposition}[Proposition 4.1, \cite{klochkov2020hanson}]
\label{prop:Klochkov_and_Yhivotovskiy}
    Consider the set of independent Hermitian matrices $\X_1, \dots, \X_n \in \H(d)$ such that $\norm*{\norm{\X_i}}_{\psi_1} \le \infty$. Set $M \eqset \norm*{\max_{i\in [n]}\norm{\X_i}}_{\psi_1}$ and let the positive-definite matrix $\Sg$ be s.t. $\E \sum_i \X^2_i \preccurlyeq \Sg$. Finally, set $\sg^2 \eqset \norm{\Sg}$. There are absolute constants $c, C, c_1 > 0$ such that for any $t>c_{1}\max\{M, \sg\}$ we have
    \[
    \P \lr{\norm*{\sum^n_{i=1} \X_i - \E\X_i} > t} \le C r(\Sg)
    \exp\lr{-c \min\lrc{\frac{t^2}{\sg^2}, \frac{t}{M}}}.
    \]
\end{proposition}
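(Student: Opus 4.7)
The plan is to combine Adamczak's truncation technique from \cite{adamczak2008tail}, which yields the scalar sub-exponential Bernstein bound, with the dimension-free matrix Bernstein-type inequality of Minsker (Proposition~\ref{prop:Minsker}). I would fix a truncation level $\tau > 0$ (of order $M$ up to a logarithmic factor, to be tuned at the end) and split $\X_i = \Y_i + \Z_i$ with
\[
\Y_i \eqset \X_i \ind\lrc{\norm{\X_i} \le \tau}, \qquad \Z_i \eqset \X_i \ind\lrc{\norm{\X_i} > \tau}.
\]
This decomposes the centered sum as $\sum_i (\X_i - \E \X_i) = \sum_i (\Y_i - \E \Y_i) + \sum_i (\Z_i - \E \Z_i)$, and each piece is then handled separately.

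For the truncated part, each summand $\Y_i - \E \Y_i$ has operator norm at most $\tau + \E \norm{\X_i} \lesssim \tau$ (since $\E \norm{\X_i} \lesssim M \le \tau$), while $\sum_i \E (\Y_i - \E \Y_i)^2 \preccurlyeq \sum_i \E \Y_i^2 \preccurlyeq \Sg$. Viewing $\lrc{\Y_i - \E \Y_i}$ as a martingale-difference sequence with respect to the natural filtration generated by $\X_1, \dots, \X_i$, and invoking Proposition~\ref{prop:Minsker} together with the bound $r(\Sg)$ on the trace prefactor appearing there, yields
\[
\P\lr{\norm*{\sum_i (\Y_i - \E \Y_i)} > t/2} \le C \, r(\Sg) \exp\lr{-c \min\lrc{t^2/\sg^2,\, t/\tau}}.
\]
The tail part is controlled using the event $A \eqset \lrc{\max_i \norm{\X_i} \le \tau}$: on $A$ all $\Z_i$ vanish, so $\sum_i (\Z_i - \E \Z_i) = -\sum_i \E \Z_i$, whose norm is bounded by $\sum_i \E\lrs{\norm{\X_i} \ind\lrc{\norm{\X_i}>\tau}}$ and is exponentially small in $\tau/M$ by the sub-exponential tail of $\norm{\X_i}$; off $A$ the defining property of $M$ yields $\P(A^c) \le 2 \exp(-\tau/M)$.

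The main obstacle is the choice of $\tau$: taking $\tau \asymp M$ makes the truncated bound display the desired $\min\lrc{t^2/\sg^2,\, t/M}$ rate but leaves $\P(A^c)$ only of constant order, whereas enlarging $\tau$ fixes the tail at the expense of weakening the Bernstein rate. The resolution exploits the prefactor $r(\Sg)$ itself as extra budget: picking $\tau$ of order $M$ times a logarithmic-in-$r(\Sg)$ factor makes $2\exp(-\tau/M)$ dominated by the $r(\Sg) \exp(-c t/M)$ contribution, and the hypothesis $t > c_1 \max\lrc{M, \sg}$ provides the slack needed to absorb this logarithmic factor into the universal constants. The same hypothesis absorbs the bias $\sum_i \E \Z_i$ into the $t/2$ deviation budget, delivering the claimed inequality; this delicate interplay between the truncation level, the effective rank, and the admissible range of $t$ is the technical heart of the argument.
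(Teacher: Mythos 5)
Your overall architecture---truncate at level $\tau$, feed the bounded parts into an intrinsic-dimension matrix Freedman/Bernstein inequality, and control the exceedances separately---is the right one; it is the route indicated by the citation of \cite{adamczak2008tail} and \cite{minsker2017some} and is mirrored in the proof of Theorem~\ref{thm:dim_free_unbounded} of this paper. The genuine gap is in your treatment of the tail part. You condition on $A=\lrc{\max_i\norm{\X_i}\le\tau}$ and pay $\P(A^c)\le 2e^{-\tau/M}$. That probability does not depend on $t$, whereas the target bound $C\,r(\Sg)\exp\lr{-c\min\lrc{t^2/\sg^2,\,t/M}}$ tends to $0$ as $t\to\infty$; hence no $t$-independent choice of $\tau$---in particular not $\tau\asymp M\log r(\Sg)$, which only yields $\P(A^c)\asymp 1/r(\Sg)$, a positive constant---can make $\P(A^c)$ dominated by the right-hand side for all admissible $t$. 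Letting $\tau$ grow with $t$ does not rescue the argument: to get $e^{-\tau/M}\lesssim e^{-ct/M}$ in the regime $t\ge\sg^2/M$ you need $\tau\gtrsim t$, but then the Bernstein exponent for the truncated part degrades from $t/\tau\asymp t/M$ to a constant. The two requirements on $\tau$ are incompatible once $t\gg M$.

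The fix is to bound $\P\lr{\sum_i\norm{\Z_i}>t/2}$ directly by a concentration inequality for the nonnegative sum of exceedances rather than by a union bound on the event that none occur: a Hoffmann--J{\o}rgensen/Adamczak-type lemma gives $\norm[\big]{\sum_i\norm{\X_i}\ind\lrs{\norm{\X_i}>\tau}}_{\psi_1}\lesssim M$ for $\tau\gtrsim M$, so this part contributes $2e^{-ct/M}\le 2\,r(\Sg)e^{-ct/M}$ for \emph{every} $t$ (this is exactly the role played by applying Theorem~\ref{thm:martingale} to $\xi_i=\lmax(\Z_i)$ in the proof of Theorem~\ref{thm:dim_free_unbounded}). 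Two lesser points: your claim that the bias $\sum_i\E\lrs{\norm{\X_i}\ind\lrs{\norm{\X_i}>\tau}}$ is exponentially small ``by the sub-exponential tail of $\norm{\X_i}$'' silently loses a factor of $n$ if only the individual tails are used ($n e^{-\tau/M}$ would force $\tau\gtrsim M\log n$ and a $\log n$ loss in the rate); one needs independence and the maximum, via $\sum_i\P(\norm{\X_i}>s)\le 2\P(\max_i\norm{\X_i}>s)$ for $s$ above the median of the maximum. Also, the prefactor in Proposition~\ref{prop:Minsker} is $50\tr\min\lrc{1,\frac{t}{K}\frac{\E\Sg}{\sg^2}}\le 50\,\frac{t}{K}\,r(\Sg)$, so the extra factor $t/K$ must be absorbed into the exponential; this works but should be stated.
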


\paragraph{Other results.} Many excellent results deal with the Bernstein-type bounds under different settings, e.g., \citep{van2013bernstein, gao2014bernstein}. The work \cite{shetty2021distribution} introduces sub-Gaussian Freedman matrix inequality (see Lemma 4). The works by  \cite{zhivotovskiy2024dimension, puchkin2023sharper} consider dimension-free bounds.
However, these works are beyond the scope of the current study. 
For other references, we recommend \cite{boucheron2013concentration}.

\subsection{Contribution of the current study}\label{subsec:main_res}

In this work, we consider a matrix-valued supermartingale difference sequence $\bm{0} \equiv \X_0, \X_1, \dots, \X_n \in \H(d)$ adapted to a filtration $(\F_i)^n_{i=0}$ ($\F_0 \eqset \{\Omega, \varnothing\}$ is the trivial $\sg$-algebra), i.e., \ $\E \norm{\X_i} < \infty$ and $\E\lrs{\X_i \middle| \F_{i-1}} \preccurlyeq \bm{0}$ for all $i = 1, \dots, n$. 
Set for any $k=1, \dots, n$
\[
\Sm_k \eqset \sum_{i=1}^k \X_i.
\]
Clearly, $(\Sm_i)_{i=1}^n$ is a matrix supermartingale adapted to $(\F_i)^n_{i=1}$.

\paragraph{Results incorporating ambient dimension.} Theorem~\ref{thm:martingale} shows that one can obtain a combination of Bernstein- and Bennett-type deviation bounds on $\max_{k \in [n]} \lmax(\Sm_{k})$. All constants are computed explicitly. We note that the result can be extended to the case of rectangular matrices using dilations (see, e.g., \citep{paulsen2002completely,tropp2012user}).

The validity of the result depends on assumptions about the behavior of the observations $\X_i$. 
Specifically, we assume the \emph{conditioned Orlicz norm}, $\norm{\lmax(\X_i)_{+} | \F_{i-1}}_{\pa}$, is bounded. 
As far as we know, this is a new concept, so we explain it below.

\textbf{\textit{Conditioned Orlicz norm.}} 
Let $(\Omega, \F, \P)$ be a fixed probability space. 
We denote by $\ind[E]$ the indicator of an event $E \in \F$. 
Given a sub-$\sigma$-algebra of events $\mathscr{G} \subset \F$ and a random variable $X \in \R$, let $\mu_{X | \mathscr{G}}$ be a conditional distribution of $X$ w.r.t.\ $\mathscr{G}$; i.e.\ $\mu_{X | \mathscr{G}}$ is a $\mathscr{G}$-measurable random measure on $\R$ such that for any Borel set $A \subset \R$
\begin{equation}
\label{def:mu_cond}
\mu_{X | \mathscr{G}}(A) = \P\lrs{X \in A \middle| \mathscr{G}} \eqset \E\lrs{\ind[X \in A] \middle| \mathscr{G}} ~~\text{a.s.},
\end{equation}
see \citep[Chapter~2~\S7]{shiryaev2016probability}. We define the conditional Orlicz norm of $X$ as the norm of the conditional distribution $\mu_{X | \F}$, 
\[
\norm*{X | \F}_{\pa} \eqset \norm*{\mu_{X | \F}}_{\pa},
\]
i.e.,\ this is a $\mathscr{G}$-measurable random variable $\omega \mapsto \norm{\mu_{X | \F}(\omega)}_{\pa}$. 
%\textcolor{red}{In other words, with a slight abuse of notation, we denote by the norm of a measure $\mu$ the norm of a r.v.\ distributed according to $\mu$.}
The conditional Orlicz norm of $X$ can be explicitly written, e.g., as
\begin{equation*}
\label{def:cond_norm}
    \norm*{X | \F}_{\pa} = \sup_{t \in \mathbb{Q},\, t \ge 0} t \ind\lrs{\E\left[\psi_\alpha\left(\frac{|X|}{t}\right) \middle| \F\right] > 1 }~~\text{a.s.},
\end{equation*}
where $\mathbb{Q}$ is the set of rational numbers.
Since $\mathbb{Q}$ is countable and dense in $\R$, one can see that this is indeed a random variable and it coincides with $\norm{\mu_{X | \F}(\omega)}_{\pa}$ a.s. 

\paragraph{Dimension-free bounds.}
We derive Bernstein- and Bennett-type bounds incorporating the effective rank $r(\Sg)$ instead of the ambient dimension $d$. Specifically, Theorem~\ref{thm:dim_free_bounded} deals with the bounded observations, and Theorem~\ref{thm:dim_free_unbounded} contains the result for the unbounded case.

\paragraph*{Corollaries.} To demonstrate the practical applicability of the results, we derive several corollaries including an empirical Bernstein inequality and a version of McDiarmid inequality. 

\textbf{\textit{Empirical Bernstein-type inequality.}} Bernstein-type bounds rely on the true variance of the observations, while empirical Bernstein-type bounds, in contrast, incorporate a data-driven variance estimator \citep{peel2010empirical, martinez2024empirical}. 
The latter ones play an essential role in the theoretical analysis of machine learning algorithms \citep{audibert2007tuning, mnih2008empirical, maurer2009empirical, shivaswamy2010empirical, tolstikhin2013pac}. 

Corollary \ref{cor:empirical} presents an empirical Bernstein-type bound for the case of i.i.d.\ matrix-valued observations. 
To the best of our knowledge, this result is novel.

\textbf{\textit{McDiarmid's inequality.}}
McDiarmid's inequality provides a powerful tool for bounding the deviations of functions of independent random variables from their expected values. 
Specifically, it addresses the functions satisfying the bounded property.

Let $\mathcal{Y}$ be a measurable space and let $f\colon \mathcal{Y}^n \rightarrow \R$ be such that there exist $U_1, \dots U_n \in \R_{+}$ satisfying
\[
\sup_{y'_i \in \mathcal{Y}} \left|f(y_1, \dots, y_{i-1}, y_i, y_{i+1}, \dots, y_n) -  f(y_1, \dots, y_{i-1}, y'_i, y_{i+1}, \dots, y_n)\right| \le U_i
\]
for all $y \in \mathcal{Y}^n$.

McDiarmid's inequality \citep[see][(1.3)]{mcdiarmid1989method} ensures that if $Y_1, \dots, Y_n \in \mathcal{Y}$ are independent and if $f$ satisfies the above properties than
\[
\P\left(f(Y_1,\dots, Y_n )  - \E f(Y_1,\dots, Y_n ) \ge t \right) \le \elr{- \frac{2 t^2}{\sum_i U^2_i}}.
\]

Many works develop McDiarmid inequalities under extended settings \citep{kutin2002extensions, rio2013on,warnke2016method, zhang2019mcdiarmid}. 
Among the recent results on the concentration of dependent and unbounded observation, one should mention the work by \citet{maurer2021concentration}. 
The authors propose a Bernstein-type generalization of McDiarmid's inequality for functions with sub-exponential differences.

Corollary \ref{corollary_maurer1} and Corollary \ref{corollary:maurer2} present Bernstein-type McDiarmid inequalities for functions whose differences have bounded $\pa$-Orlicz norm. 
We compare the results with those by \citet{maurer2021concentration}.

\subsection*{Organization of the paper}

Section~\ref{sec:main} presents the main results, which include Bennett- and Bernstein-type bounds stated in terms of both the ambient dimension 
$d$ and the effective rank of the covariance matrix of the observations. This section also examines the tail behavior of the Bennett-type bound and compares the results with those discussed in the Introduction. Section~\ref{sec:corollaries} contains all corollaries. Finally, Section~\ref{sec:proofs} collects the main proofs, and the Appendix provides additional auxiliary results.

\subsection*{Accepted notations}

\textbf{\textit{Spaces and sets.}} We denote as $\H(d)$ the space of all $d$-dimensional Hermitian matrices. 
$\H_{+}(d) \subset \H(d)$ is the set of positive semi-definite Hermitian matrices. 
$\H_{++}(d) \subset \H(d)$ is the set of positive-definite Hermitian matrices. 
Further, we denote the integer indices as $[n] = \{1, \dots, n\}$.

\textbf{\textit{Norms.}} Let $\norm{\bm{A}}$ be the operator norm of a matrix $\bm{A}$. By analogy with the conditioned Orlicz norm, we define the conditioned $L^{\infty}$-norm as $\norm*{X | \F}_{\infty} \eqset \norm*{\mu_{X | \F}}_{\infty}$, with $\mu_{X | \F}$ defined in \eqref{def:mu_cond}.

\textbf{\textit{Functions.}} From now on, we set for any $x \in \R$
\[
\logg x \eqset \max(\ln x, 1), 
\quad
x_{+} \eqset \max(x, 0).
\]

Let $\ind[E]$ be the indicator of an event $E$ (i.e., $\ind[E]$ is a random variable). Respectively, $\ind_E$ denotes the indicator function of a set $E$.

Further, we define functions $\phi \colon \R \to \R$ and $h \colon (-1, \infty) \to \R$ as
\begin{equation}\label{def:phi_h}
    \phi(t) \eqset e^t - 1 - t,
    \quad
    h(x) \eqset (1 + x) \ln(1 + x) - x.
\end{equation}
Note that $h$ is the convex conjugate of $\phi$. Now, let $f$ be a scalar function. 
For any $d \times d$ diagonal matrix $\bm{\Lambda} = \diag(\lm_1, \dots, \lm_d)$, we define
\[
f(\bm{\Lambda}) \eqset \diag\bigl(f(\lm_1), \dots, f(\lm_d)\bigr).
\]
Respectively, given a matrix $\mathbf{A} \in \H(d)$ with a spectral decomposition $\mathbf{A} = \mathbf{U} \bm{\Lambda} \mathbf{U}^*$, we set
\[
f(\mathbf{A}) \eqset \mathbf{U} f(\bm{\Lambda}) \mathbf{U}^*.
\]
We also recall the transfer rule. 
Let for any $x \in I \subset \R$, $f(x) \le g(x)$. 
If all eigenvalues of $\mathbf{A}$ belong to $I$, then $f(\mathbf{A}) \preccurlyeq g(\mathbf{A})$.

%w these new dimension-dependent results can be integrated into practical algorithms, providing improved performance in real-world scenarios.

\section{Main results}
\label{sec:main}

This section presents the core Bernstein- and Bennett-type bounds for matrix supermartingales. 
Recall that we formulate them using a difference sequence defined in Section~\ref{subsec:main_res}.

\subsection{Bernstein- and Bennett-type bounds with ambient dimension \texorpdfstring{$d$}{d}}

We begin by presenting the bounds involving the ambient dimension $d$.

\begin{theorem}\label{thm:martingale}
    Let $\X_1, \dots, \X_n \in \H(d)$ be a supermartingale difference sequence adapted to a filtration $(\F_i)^n_{i=0}$ with $\F_0 = \{\Omega, \varnothing\}$. Fix $\alpha > 0$ and set for all $i \in [n]$
    \[
    \Sg_i \eqset \E\lrs{\X^2_i \middle| \F_{i-1}},
    \quad
    U_i \eqset \norm*{\lmax(\X_i)_+ \big| \F_{i-1}}_{\pa}.
    \]
    
    Fix $\sg > 0$, $U \ge K > 0$ and define the event
    \begin{equation}
    \label{eq:E}
        E \eqset \lrc{\lmax\left(\sum^n_{i = 1} \Sg_i\right) \le \sg^2, \quad
        \sum^n_{i = 1} U^{2}_i \le U^2, \quad
        \max_{i \in [n]} U_i \le K}.
    \end{equation}
    Let
    \begin{equation}
    \label{eq:def_z}
        z = z(U, \sg; \alpha) \eqset
        \begin{cases}
            \lr{4 \logg \frac{e U}{\sg}}^{1/\alpha} & \text{if}~\alpha\ge 1, \\
            \lrs{\frac{4}{\alpha} \ln \frac{e}{\alpha} + 4 \lr{\ln \frac{U}{\sg}}_+}^{1/\alpha}, &\text{if}~\alpha < 1.
        \end{cases}  
    \end{equation}
    
    Then for any $\xx > 0$, it holds, with probability at least $\P(E) - d e^{-\xx} - e^{-\xx} \ind[\alpha < 1]$, that
    \begin{align}\label{res:ben}
        \max_{k \in [n]} \lmax(\Sm_k) \le& \sg \sqrt{2 \xx} + \frac{4 \kz \xx}{\min\lrc{2 \az, \logg\lr{\lr{\frac{Kz}{\sg}}^2 \xx} }} \nonumber \\
        &+ \frac{3K}{\alpha}\xx\left(2 \xx + 2\ln\lr{\frac{4 U}{K}}
        + \frac{4}{\alpha}\ln \lr{\frac{4}{\alpha e}} \right)^{\frac{1-\alpha}{\alpha}} \ind[\alpha < 1]
        \tag{Ben}
    \end{align}
    and, moreover,
    \begin{align*}\label{res:ber}
        \max_{k \in [n]} \lmax(\Sm_k)  \le \sg \sqrt{2 \xx} + \frac{3}{4} \kz \xx 
        + \frac{3K}{\alpha}\xx\left(2 \xx + 2\ln\lr{\frac{4 U}{K}}
        + \frac{4}{\alpha}\ln \lr{\frac{4}{\alpha e}} \right)^{\frac{1-\alpha}{\alpha}}  \ind[\alpha < 1] .
        \tag{Ber}
    \end{align*}
\end{theorem}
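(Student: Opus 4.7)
The plan is to reduce to a bounded-increment matrix supermartingale via truncation at level $Kz$ and then apply a Tropp/Freedman-type exponential argument. By the definition of the Orlicz (quasi-)norm, the bound $\norm{\lmax(\X_i)_+ \mid \F_{i-1}}_{\pa} \le U_i$ translates to the conditional tail estimate $\P\bigl(\lmax(\X_i)_+ > t \bigm| \F_{i-1}\bigr) \le 2 \exp(-(t/U_i)^\alpha)$. On $E$ I would introduce the good event $\mathcal{G} \eqset \bigcap_{i=1}^n \{\lmax(\X_i)_+ \le Kz\}$; the definition of $z$ in \eqref{eq:def_z} is calibrated so that, integrating the tail estimate against the constraints $\sum_i U_i^2 \le U^2$ and $\max_i U_i \le K$, a union bound yields $\P(E \setminus \mathcal{G}) \le d e^{-\xx} + e^{-\xx}\ind[\alpha<1]$, the extra scalar term reflecting the fact that for $\alpha<1$ an auxiliary Fuk--Nagaev-type argument on $\sum_i \P(\mathcal{G}_i^c \mid \F_{i-1})$ is required.

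On $E \cap \mathcal{G}$ the truncated increments $\tilde{\X}_i$ satisfy $\lmax(\tilde{\X}_i) \le Kz$, so the scalar inequality $e^s \le 1 + s + s^2 \phi(a)/a^2$ for $s \le a$ lifts by the transfer rule to the conditional matrix MGF bound
\begin{equation*}
    \E\bigl[\exp(\lambda \tilde{\X}_i) \bigm| \F_{i-1}\bigr] \preccurlyeq \exp\!\Bigl(\tfrac{\phi(\lambda Kz)}{(Kz)^2}\Sg_i + \lambda \bm{R}_i\Bigr),
\end{equation*}
where $\bm{R}_i$ collects the supermartingale drift $\E[\X_i \mid \F_{i-1}] \preccurlyeq \bm{0}$ and the truncation bias $-\E[\bar{\X}_i \mid \F_{i-1}]$; this bias is absorbable for $\alpha \ge 1$ but produces the last explicit summand in both bounds when $\alpha < 1$. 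Lieb's concavity theorem combined with this MGF estimate makes $M_k \eqset \tr\exp\bigl(\lambda \tilde{\Sm}_k - \phi(\lambda Kz)(Kz)^{-2} \sum_{i\le k} \Sg_i\bigr)$ a nonnegative supermartingale with $M_0 = d$, and Ville's maximal inequality gives
\begin{equation*}
    \P\!\bigl(\max_{k \in [n]} \lmax(\Sm_k) \ge t,\ E \cap \mathcal{G}\bigr) \le d \exp\!\Bigl(-\lambda t + \tfrac{\phi(\lambda Kz)\sg^2}{(Kz)^2}\Bigr).
\end{equation*}

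Setting the right-hand side equal to $d e^{-\xx}$ and optimizing $\lambda > 0$ produces the two claimed bounds. The \emph{Bernstein} form follows from the standard estimate $\phi(s) \le s^2/(2(1-s/3))$ valid for $s < 3$, yielding the linear-in-$\xx$ term $\tfrac{3}{4}Kz\,\xx$. The \emph{Bennett} form follows from exact Legendre inversion via $h$, splitting into two cases according to whether the unconstrained optimal $\lambda^\ast$ satisfies $\lambda^\ast Kz < 2\alpha z^\alpha$ or not; this dichotomy, between the sub-exponential regime and the sub-Gaussian regime cut off by the admissibility window, is exactly what produces the $\min\{2\alpha z^\alpha,\,\logg((Kz/\sg)^2 \xx)\}$ denominator. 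The main technical obstacle is the $\alpha<1$ regime: the quasi-norm structure forces both the extra scalar $e^{-\xx}$ deficit in the probability estimate and the explicit drift correction $\tfrac{3K}{\alpha}\xx\bigl(2\xx + 2\ln(4U/K) + (4/\alpha)\ln(4/(\alpha e))\bigr)^{(1-\alpha)/\alpha}$; I would derive both by carefully integrating the conditional tail $e^{-(t/U_i)^\alpha}$ above $Kz$ and tracking how the resulting bias propagates through the matrix MGF computation on the good event.
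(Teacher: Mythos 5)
Your overall architecture (matrix Chernoff via Lieb plus a Doob/Ville maximal inequality, truncation, constrained optimization over $\lambda$) points in the right direction, but the central step --- truncating every increment at level $Kz$ and claiming $\P(E \setminus \mathcal{G}) \le d e^{-\xx} + e^{-\xx}\ind[\alpha<1]$ --- does not work. The threshold $Kz$ does not depend on $\xx$: on $E$ the conditional tail bound gives $\P\lr{\lmax(\X_i)_+ > Kz \mid \F_{i-1}} \le 2 e^{-(Kz/U_i)^\alpha} \le 2 e^{-z^\alpha}$, and summing these over $i$ produces a fixed quantity of order $(U/(Kz))^2 e^{-z^\alpha/2}$ (cf.\ Lemma~\ref{lemma:bound_on_tail}), which cannot be $\le e^{-\xx}$ once $\xx$ is large. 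If you instead raise the threshold to a level $\tau(\xx) \gtrsim K\xx^{1/\alpha}$ so that the exceedance probability genuinely is $\le e^{-\xx}$, then the bounded-increment MGF bound $\phi(\lambda\tau)\tau^{-2}\Sg_i$ forces $\lambda \lesssim 1/\tau$, and the linear term of the resulting Bernstein bound becomes of order $K\xx^{1+1/\alpha}$ rather than $Kz\,\xx$ --- essentially Adamczak's bound, not the one claimed. A secondary accounting issue: you spend the budget $de^{-\xx}$ twice, once on $\P(E\setminus\mathcal{G})$ and once on the Chernoff step, so even if both held you would only reach $\P(E) - 2de^{-\xx} - \dots$.

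The paper's proof avoids this precisely by \emph{not} truncating when $\alpha \ge 1$. Lemma~\ref{lemma:mgf_X_bound} bounds $\E[\phi(\lambda\X_i)\mid\F_{i-1}]$ directly: the scalar inequality is split at $x = y = U_i z$ into a quadratic part $x^2\phi(\lambda y)/y^2$ plus a remainder $\rho_{\lambda,\alpha}(y)\elr{x_+^\alpha}\ind[x>y]$ whose conditional expectation is controlled by the Orlicz hypothesis $\E[\elr{(\lmax(\X_i)_+/U_i)^\alpha}\mid\F_{i-1}]\le 2$; the resulting extra isotropic term is absorbed into the variance proxy via Lemma~\ref{lemma:bound_on_e_z}, at the price of inflating $Kz$ to $M = \frac53 Kz$ and restricting $\lambda \le \lambda_0 \asymp \alpha z^{\alpha-1}/K$. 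Truncation is used only for $\alpha<1$, at the $\xx$-dependent level $\tau$ of \eqref{def:ok_tau}, and the extra $e^{-\xx}\ind[\alpha<1]$ comes from Lemma~\ref{lemma:bound_on_tail} applied to that $\tau$. Relatedly, you mis-attribute the term $\frac{3K}{\alpha}\xx\lr{2\xx+\dots}^{(1-\alpha)/\alpha}$ to a truncation bias in the MGF: integrating the conditional tail above the threshold yields only an $\xx$-independent drift and cannot produce a factor growing like $\xx^{(1-\alpha)/\alpha}$. In the paper that term arises from the \emph{constrained} optimization over $\lambda\le\lambda_0$ when $\xx$ exceeds the critical value $\xx_0$, with $\lambda_0$ depending on $\tau$ and hence on $\xx$.
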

 We postpone the proof to Section~\ref{sec:proofs_main}.
\begin{remark}
\label{rem:non-monotone}
    Note that both bounds can be non-monotone w.r.t.\ $\sg$. 
    Yet, as $\sg$ is just an upper bound, one can improve them. 
    
    For simplicity, we consider the case $\alpha \ge 1$. 
    Setting $z' \eqset \left(4 \logg \frac{e U}{\sg'} \right)^{1/\alpha}$, we get
    \begin{gather}
        \label{eq:ben2} \max_{k \in [n]} \lmax(\Sm_k) \le \inf_{\sg' \ge \sg} \lrc{\sg' \sqrt{2 \xx} + \frac{4 K z' \xx}{\min\lrc{2 \alpha (z')^\alpha ,\; \logg \lr{\lr{\frac{K z'}{\sg'}}^2 \xx} }} }, \nonumber \\
        \max_{k \in [n]} \lmax(\Sm_k) \le \inf_{\sg' \ge \sg} \lrc{\sg' \sqrt{2 \xx} + \frac{3}{4} K z'\xx }.\nonumber
        \label{eq:ber2}
    \end{gather}

The same holds for $0 < \alpha < 1$ with the corresponding substitution of $\sigma$ by $\sigma'$ to $z$.

Moreover, if $\sg' = \sg + K z(U, K \sqrt{\xx}; \alpha) \sqrt{\xx}$, then $z' \le z(U, K \sqrt{\xx}; \alpha) := z$ and thus
\[
\max_{k \in [n]} \lmax(\Sm_k) \le \sg' \sqrt{2 \xx} + \frac{3}{4} K z' \xx \le \sg \sqrt{2 \xx} + \lr{\sqrt{2} z + \frac{3}{4} z'} K \xx \le \sg \sqrt{2 \xx} + \frac{5}{2} K z \xx .
\]
\end{remark}

Notably, the result in \eqref{res:ben} reveals for $\alpha \ge 1$ three distinct regimes---sub-Gaussian, sub-Poisson, and sub-exponential---depending on the magnitude of $\xx$. 

\textbf{\textit{Sub-Gaussian.}} 
If $\kzss \xx \lesssim 1$,
\begin{equation}
\label{eq:subg_r}
    \max_k \lmax(\Sm_k) \lesssim \sg \sqrt{ \xx}.
\end{equation}
We note that if $\kzss \xx \ll 1$, the leading term $\sg \sqrt{2 \xx}$ is sharp: it coincides with the rate in the CLT for martingales (see \citep[Chapter~7 \S8]{Shiryaev2019prob2}).%Shiryaev devitaino bounds 

\textbf{\textit{Sub-Poisson.}} 
If $1 \lesssim \kzss \xx \lesssim e^{\azh}$,
\begin{equation}
    \label{eq:subpoiss_r}
    \max_k \lmax(\Sm_k) \lesssim \frac{\kz \xx}{ \ln\lr{\kzss \xx} }.
\end{equation}

\textbf{\textit{Sub-exponential.}} 
If $\kzss \xx \gtrsim e^{\azh}$,
\begin{equation}
    \label{eq:subexp_r}
    \max_k \lmax(\Sm_k) \lesssim \frac{\kz}{\az} \xx.
\end{equation}

The proofs are postponed to Appendix \ref{seq:proof_r}.

\subsection{Dimension-free bounds}
\label{sec:dim_free}

This section presents dimension-free bounds on $\max_{k\in[n]}\lmax(\Sm_k)$.
The first result provides a Bennett-type inequality for a sum of bounded matrix martingale differences. The proof relies on the methods presented in Section 7 (``Results Involving the Intrinsic Dimension'') by \cite{tropp2012user_nips}.

\begin{theorem}[Bounded case]
\label{thm:dim_free_bounded}
    Let $\X_1, \dots, \X_n \in \H(d)$ be a martingale difference sequence adapted to a filtration $(\F_i)^n_{i=0}$ with $\F_0 = \{\Omega, \varnothing\}$.
    Set $\Sg_i \eqset \E\lrs{\X^2_i \middle| \F_{i-1}}$ for all $i \in [n]$. Fix $K > 0$, $\Sg \succcurlyeq 0$ and define the event
    \[
    E \eqset \lrc{\max_{i\in[n]} \norm*{\lmax(\X_i)_{+} | \F_{i-1} }_{\infty}\le K, \quad \sum^n_{i=1} \Sg_i \preccurlyeq \Sg}.
    \]
    Denote $\sg^2 \eqset \lmax\left(\Sg \right)$.
    Then for all $t > 0$ it holds
    \begin{equation}
        \P\lr{E \bigcap \lrc{\max_{k} \lmax(\Sm_k)\ge t } } \le e \cdot r(\Sg) \elr{-\frac{\sg^2}{K^2}h\lr{\frac{Kt}{\sg^2}} }.
    \end{equation}
    Alternatively, for all $\x \ge 1$, with probability at least $\P(E) - r(\Sg)e^{-\x + 1}$, it holds
    \[
    \max_{k\in [n]}\lmax(\Sm_k) \le \frac{\sg^2}{K} h^{-1}\lr{\frac{K^2}{\sg^2} \x} \le \sg \sqrt{2\x} + \frac{2K\x}{\log\lr{\lr{\frac{K}{\sg}}^2\x}}.
    \]
\end{theorem}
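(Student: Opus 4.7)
The plan adapts the matrix Laplace-transform technique of \cite{tropp2011freedman} together with the intrinsic-dimension trick from Chapter~7 of \cite{tropp2012user_nips}. On $E$, the assumption $\norm{\lmax(\X_i)_{+} | \F_{i-1}}_{\infty} \le K$ translates into $\lmax(\X_i) \le K$ a.s.\ conditionally on $\F_{i-1}$. Combined with $\E[\X_i | \F_{i-1}] = 0$ and the scalar inequality $e^{\theta x} \le 1 + \theta x + g(\theta)\, x^2$, valid for $x \le K$ and $\theta > 0$ with $g(\theta) \eqset (e^{\theta K} - 1 - \theta K)/K^2$, the transfer rule delivers the conditional matrix MGF bound $\E[e^{\theta \X_i} \mid \F_{i-1}] \preccurlyeq e^{g(\theta)\Sg_i}$. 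An inductive application of Lieb's concavity theorem then shows that for each $\theta > 0$ the process
\[
    W_k(\theta) \eqset \tr \exp\bigl(\theta\,\Sm_k + g(\theta)(\Sg - V_k)\bigr), \qquad V_k \eqset \sum_{i=1}^k \Sg_i,
\]
is a non-negative $(\F_k)$-supermartingale on the whole sample space.

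Fix $t > 0$ and introduce the stopping time $\tau \eqset \inf\{k \in [n]: \lmax(\Sm_k) \ge t\}$. On $E \cap \{\tau \le n\}$ we have $V_\tau \preccurlyeq \Sg$, so by Weyl's monotonicity of eigenvalues $W_\tau \ge \tr e^{\theta \Sm_\tau} \ge e^{\theta \lmax(\Sm_\tau)} \ge e^{\theta t}$; Ville's maximal inequality for the non-negative supermartingale $W_k$ then yields
\[
    \P\bigl(E \cap \{\max_k \lmax(\Sm_k) \ge t\}\bigr) \le W_0(\theta)\, e^{-\theta t}, \qquad W_0(\theta) = \tr e^{g(\theta)\Sg}.
\]
To convert the ambient dimension $d$ hiding inside $W_0$ into the effective rank, decompose $W_0 = d + \tr(e^{g(\theta)\Sg} - I)$ and invoke the intrinsic-dimension lemma---a consequence of the monotonicity of $x \mapsto (e^x - 1)/x$ on $(0, \infty)$---to obtain $\tr(e^{g(\theta)\Sg} - I) \le r(\Sg)(e^{g(\theta)\sigma^2} - 1)$. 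A careful removal of the residual $d$ via the shifted trace function $\phi(A) = e^A - I - A$, combined with a dichotomy on whether $e^{\theta t} \ge d$ (along the lines of \cite{tropp2012user_nips} and \cite{minsker2017some}), then upgrades the bound to the clean form with prefactor $e \cdot r(\Sg)$. Finally, optimizing at the Bennett value $\theta^* = K^{-1}\log(1 + Kt/\sigma^2)$, for which $g(\theta^*)\sigma^2 - \theta^* t = -(\sigma^2/K^2) h(Kt/\sigma^2)$, delivers the stated tail estimate.

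For the alternative high-probability form, setting $e \cdot r(\Sg)\, e^{-(\sigma^2/K^2) h(Kt/\sigma^2)} = r(\Sg)\, e^{-\x + 1}$ gives $t = (\sigma^2/K)\, h^{-1}(K^2 \x/\sigma^2)$; the final displayed inequality follows from a standard scalar estimate on $h^{-1}$, derived by splitting into the sub-Gaussian regime (using $h(u) \ge u^2/(2(1 + u/3))$) and the sub-Poisson regime (using $h(u) \gtrsim u(\log u - 1)$) and bounding each contribution separately. The main obstacle is the dimension-reduction step: the naive subtraction $W_k - d$ preserves the supermartingale property but destroys non-negativity at the terminal time, so Ville's inequality cannot be applied directly. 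Handling this correctly---and retaining the sharp absolute constant $e$ rather than a larger one such as the $50$ of \cite{minsker2017some}---requires a careful coupling of the shifted trace function with the stopping-time argument and a tight scalar estimate at the Bennett optimizer.
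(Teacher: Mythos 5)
Your overall strategy---matrix Laplace transform with the conditional MGF bound $\ln\E[e^{\theta\X_i}\mid\F_{i-1}]\preccurlyeq \frac{\phi(\theta K)}{K^2}\Sg_i$, Lieb's theorem, and an intrinsic-dimension reduction of the trace---is the same as the paper's, but two points where your sketch deviates are exactly where the difficulty lies, and you have not resolved them.

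First, the claim that $W_k(\theta)=\tr\exp\bigl(\theta\Sm_k+g(\theta)(\Sg-V_k)\bigr)$ is ``a non-negative supermartingale on the whole sample space'' is false. The conditional bound $\lmax(\X_i)\le K$ is only guaranteed on the event $E$, so the MGF domination $\ln\E[e^{\theta\X_i}\mid\F_{i-1}]\preccurlyeq g(\theta)\Sg_i$ holds only on the $\F_{i-1}$-measurable event $E_i\eqset\{\max_{j\le i}\norm{\lmax(\X_j)_+\mid\F_{j-1}}_\infty\le K,\ \sum_{j\le i}\Sg_j\preccurlyeq\Sg\}$; off $E_i$ the conditional MGF may not even be finite. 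The paper therefore works with the nested events $E_n\subset\dots\subset E_1$, $E_k\in\F_{k-1}$, and multiplies by $\ind[E_k]$ inside the supermartingale construction (Lemma~\ref{lemma:mart_bound_dim_free}). Your stopping time $\tau$ does not repair this, because $\tau$ detects large $\lmax(\Sm_k)$, not failures of the conditional boundedness.

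Second, and more importantly, the dimension-reduction step is the core of the theorem and your proposal defers it rather than proving it. You correctly note that subtracting $d$ from $W_k$ destroys non-negativity, but the proposed fix---``a careful coupling of the shifted trace function with the stopping-time argument'' plus ``a dichotomy on whether $e^{\theta t}\ge d$''---is not a proof, and the dichotomy you name reintroduces the ambient dimension $d$, which cannot appear in the final bound. The paper's actual mechanism is: the process $\tr\phi(\Z_k+\A)$ with $\phi(x)=e^x-1-x$ is non-negative but only satisfies $\E[\tr\phi(\Z_k+\A)\mid\F_{k-1}]\le\tr\phi(\Z_{k-1}+\A)+\tr\tilde\V_k$; adding the predictable compensator $\sum_{i>k}\tr\E[\ind[E_i](\tilde\V_i)_+\mid\F_k]$ produces a genuinely non-negative supermartingale $\xi_k$, and Doob's maximal inequality yields the bound $\bigl(\tr\phi(\A)+\sum_i\tr\E\ind[E_i](\tilde\V_i)_+\bigr)/\phi(\lm t)\le\tr(e^{\A}-\Ii)/\phi(\lm t)$, to which Lemma~7.5.1 of \cite{tropp2012user_nips} applies. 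The constant $e$ then comes from a dichotomy on $\lm t\ge\max\{1,\ \sg^2\phi(\lm K)/K^2\}$ using the monotonicity of $x\mapsto 1-x/(e^x-1)$, evaluated at the Bennett optimizer (for which $\lm_{opt}t\ge\sg^2\phi(\lm_{opt}K)/K^2$ automatically), with the complementary case trivial because the resulting bound exceeds $1$. Without supplying this compensator construction and the correct dichotomy, your argument does not yield the stated prefactor $e\cdot r(\Sg)$. The final conversion to the high-probability form via a bound on $h^{-1}$ is fine.
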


The next result extends Theorem~\ref{thm:dim_free_bounded} to the unbounded case.
The proof technique relies on a combination of several approaches.  First, we decompose the random matrix into its main and tail components using a suitably chosen cut-off threshold. Then, we use Theorem~\ref{thm:dim_free_bounded} to bound the main components and apply Theorem~\ref{thm:martingale} to control the tail components via their norms.

\begin{theorem}[Unbounded case]
    \label{thm:dim_free_unbounded}
    Let $\X_1, \dots, \X_n \in \H(d)$ be a martingale difference sequence adapted to a filtration $(\F_i)^n_{i=0}$ with $\F_0 = \{\Omega, \varnothing\}$. 
    Fix $\alpha > 0$ and set for all $i \in [n]$
    \[
    \Sg_i \eqset \E\lrs{\X^2_i \middle| \F_{i-1}},
    \quad
    U_i \eqset \norm*{\lmax(\X_i)_+ \big| \F_{i-1}}_{\pa}.
    \]
    
    Fix $\Sg \succcurlyeq 0$, $U \ge K > 0$ and define the event
    \begin{equation}
    \label{eq:E_p1}
        E \eqset \lrc{\sum^n_{i = 1} \Sg_i \preccurlyeq \Sg,
        \quad
        \sum^n_{i = 1} U^{2}_i \le U^2,
        \quad
        \max_{i \in [n]} U_i \le K}.
    \end{equation}
    Let $\sg^2 \eqset \lmax(\Sg)$, and $z$ be as in \eqref{eq:def_z}. Then for any $\xx > 0$ with probability at least $\P(E) - (e\cdot r(\Sg) +1) e^{-\xx} - e^{-\xx} \ind[\alpha < 1]$, it holds for any $0 < \eps\le 1$
    \begin{align}\label{res:ben_dim_free}
        \max_{k \in [n]} \lmax(\Sm_k) \le& (1 + \eps)\sg \sqrt{2\xx} + \frac{7 \lr{\ln \frac{8}{\eps}}^{1/\alpha} \kz \xx}{\min\lrc{2 \az, \logg\lr{\lr{\frac{K z}{\sg}}^2 \xx} }} \nonumber \\
        &+ \frac{3 K}{\alpha} \left(2 \xx + 2\ln\lr{\frac{4 U}{K}}
        + \frac{4}{\alpha}\ln \lr{\frac{4}{\alpha e}} \right)^{\frac{1-\alpha}{\alpha}} \ind[\alpha < 1] ,
    \end{align}
    and moreover,
    \begin{align}\label{res:ber_dim_free}
       \max_{k \in [n]} \lmax(\Sm_k) \le& (1 + \eps)\sg \sqrt{2 \xx} + 2 \lr{\ln \frac{8}{\eps}}^{1/\alpha} \kz \xx \nonumber \\
       &+ \frac{3 K}{\alpha} \left(2 \xx + 2\ln\lr{\frac{4 U}{K}}
        + \frac{4}{\alpha}\ln \lr{\frac{4}{\alpha e}} \right)^{\frac{1-\alpha}{\alpha}} \ind[\alpha < 1] .
    \end{align}
\end{theorem}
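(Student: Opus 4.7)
The proof combines a truncation argument with both dimension-free and ambient-dimension concentration inequalities. Fix a threshold $T > 0$ of order $(\ln(8/\eps))^{1/\alpha}\,Kz$, to be calibrated at the end. For each $i\in[n]$, decompose $\X_i$ spectrally into a head and a tail part,
\[
\bar\X_i \eqset \X_i \ind[\lmax(\X_i)_+ \le T], \qquad \hat\X_i \eqset \X_i - \bar\X_i,
\]
and re-center to preserve the martingale property:
\[
\Y_i \eqset \bar\X_i - \E[\bar\X_i | \F_{i-1}], \qquad \Z_i \eqset \hat\X_i - \E[\hat\X_i | \F_{i-1}],
\]
so that $\X_i = \Y_i + \Z_i$ with both sequences again forming matrix martingale differences.

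For the head sequence $(\Y_i)$, I would apply Theorem~\ref{thm:dim_free_bounded}. Two checks are needed. First, $\lmax(\Y_i)_+ \le T + \delta_i$, where $\delta_i = \lmax(\E[\hat\X_i|\F_{i-1}])_+$ is controlled, via the conditioned-Orlicz Markov inequality, by $\E[\lmax(\X_i)_+ \ind[\lmax(\X_i)_+ > T] \mid \F_{i-1}]$, which is exponentially small in $T/U_i$. Second, the conditional variance shrinks under truncation: $\sum_i\E[\Y_i^2|\F_{i-1}] \preccurlyeq \sum_i \E[\bar\X_i^2|\F_{i-1}] \preccurlyeq \Sg$, since spectral truncation only reduces the positive square. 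Theorem~\ref{thm:dim_free_bounded} then delivers, with probability at least $1 - e\,r(\Sg)e^{-\xx}$, a dimension-free Bennett/Bernstein bound on $\max_k\lmax(\sum_{i\le k}\Y_i)$, producing the leading $\sg\sqrt{2\xx}$ and the $(\ln(8/\eps))^{1/\alpha} K z\xx / \min\{\ldots\}$ fraction appearing in \eqref{res:ben_dim_free}.

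For the tail martingale $\sum_i \Z_i$, applying Theorem~\ref{thm:martingale} directly would re-introduce the ambient-dimension factor $d$. I would avoid this by using the subadditivity of $\lmax$ to reduce to a scalar bound:
\[
\lmax\lr{\sum_i \Z_i} \le \sum_i \lmax(\hat\X_i)_+ + \sum_i\lmax(\E[\hat\X_i|\F_{i-1}])_+,
\]
where the first sum equals $\sum_i\lmax(\X_i)_+\ind[\lmax(\X_i)_+ > T]$---a scalar sum whose centered version has conditional $\pa$-Orlicz norm controlled by a truncated-Orlicz estimate---and the second is a predictable deterministic correction of order $\sum_i U_i\exp(-(T/U_i)^\alpha)$. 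To the scalar centered sum I apply Theorem~\ref{thm:martingale} with $d=1$, obtaining a contribution $\eps\sg\sqrt{2\xx}$ absorbed into the leading $(1+\eps)\sg\sqrt{2\xx}$, together with the $\ind[\alpha<1]$ sub-Weibull remainder inherited from Theorem~\ref{thm:martingale}.

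The main obstacle is calibrating the cutoff $T = T(\eps)$ so that three requirements hold simultaneously: (a) the tail scalar variance proxy is at most $O(\eps^2\sg^2)$ to give exactly the factor $(1+\eps)$; (b) the head effective bound $T + \delta_T$ inflates the constant $4$ in \eqref{res:ben} to the stated $7$ in \eqref{res:ben_dim_free}; and (c) the predictable correction $\sum_i\lmax(\E[\hat\X_i|\F_{i-1}])_+$ is absorbed without affecting the rate. The choice $T\asymp (\ln(8/\eps))^{1/\alpha} K z$ satisfies all three, and a union bound over the head and tail concentration events yields the announced probability $\P(E) - (e\,r(\Sg)+1)\,e^{-\xx} - e^{-\xx}\ind[\alpha<1]$.
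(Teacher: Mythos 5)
Your proposal follows essentially the same route as the paper's proof: truncate at a cutoff of order $\lr{\ln\frac{8}{\eps}}^{1/\alpha} K z$, apply Theorem~\ref{thm:dim_free_bounded} to the recentered head part, reduce the tail part via subadditivity of $\lmax$ to the scalar sum $\sum_i \lmax(\X_i)_+ \ind[\lmax(\X_i)_+ > T]$ handled by Theorem~\ref{thm:martingale} with $d = 1$, control the predictable corrections by their exponential smallness in $(T/U_i)^\alpha$, and combine with a union bound giving exactly the stated probability budget. The only cosmetic differences are that the paper truncates spectrally, writing $\Y_i = \X_i \ind_{(-\infty, y]}(\X_i)$ so that the tail piece is positive semi-definite, rather than by the event $\{\lmax(\X_i)_+ \le T\}$, and it adds a small term $\frac{\eps \sg}{2 e \sqrt{\xx}}$ to the cutoff to make the tail conditional expectations sum to $O(\eps\sg\sqrt{\xx})$ --- precisely the calibration step (a)--(c) you correctly identify as the remaining work.
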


The proofs of Theorem~\ref{thm:dim_free_bounded} and Theorem~\ref{thm:dim_free_unbounded} are in Section~\ref{sec:proofs_dim_free}.

\subsection{Comparison with other results}
\label{subsec:comparison}

\paragraph*{Bennett-type result for $\alpha \rightarrow \infty$, Proposition~\ref{prop:tropp_bound}.} 

Recall that the classical Bennett's bound corresponds to the case $\alpha = +\infty$ (bounded case).
Consider~\eqref{res:ben}. 
If $\alpha \rightarrow \infty$, then $z \rightarrow 1$. 
This yields $\alpha z^{\alpha} \rightarrow \infty$. 
Thus, one gets the sub-Poisson tail behavior~\eqref{eq:subpoiss_r} that coincides with the Bennett-type bound from Proposition~\ref{prop:tropp_bound} up to a multiplicative constant.

\paragraph*{Bernstein for sub-gamma matrices, Proposition~\ref{prop:tropp_moment}.}
Bernstein's moment condition for scalar random variables is equivalent to
\[
\E \phi\left(\frac{|X_i|}{U_i}\right) \lesssim \frac{\sigma_i^2}{U_i^2},
\]
up to multiplicative constants (see p.103 in \citep{van1996weak}). 
Moreover, the proof of Lemma~\ref{lemma:mgf_X_bound} ensures that a bound on the Orlicz norm yields Bernstein's moment condition. 
This, in turn, yields Bernstein's concentration inequality in the scalar case.

This approach requires two-sided bounds on $X_i$ while Theorem~\ref{thm:martingale} requires only a one-sided one.
Furthermore, in the matrix setting, it is not apparent how to obtain Bernstein's condition for non-isotropic $\Sg_i$ (i.e.,\ for $\Sg_i$ with large condition number), except in the case of bounded or commutative random matrices $\X_i$.

\paragraph*{Adamczak's result ($\alpha \le 1$) \citep{adamczak2008tail}.} 
This work focuses on empirical processes. 
The proof technique primarily builds upon the Klein--Rio bound \citep{klein2005concentration}, Hoffman--J\o rgensen, and Talagrand inequalities. 
As a result, the derived bound includes an additional multiplicative term that arises naturally from these methods and is standard in the setting of empirical processes.

To illustrate the difference between our results and those of Adamczak, we consider a sum of independent scalar random variables $X_1, \dots, X_n$. 
The author uses truncation of $X_i$ at a certain constant level.
This yields a bound on a quantile of the tail term. The bound is proportional to
$\norm*{\max_{i \in [n]} |X_i|}_{\pa} \xx^{1/\alpha}$ (see equation~(11) in \citep{adamczak2008tail}). This bound is comparable to the quantile threshold $\tau$ given by \eqref{def:ok_tau} in the proof of Theorem~\ref{thm:martingale}.

\paragraph*{Koltchinkii's bound ($\alpha \ge 1$), Proposition~\ref{prop:koltch_indep}.}
This setting by Vladimir Koltchinskii is the closest to the current study setting. However, there are several differences. 
First, Theorem~\ref{thm:martingale} handles dependent observations, while Proposition~\ref{prop:koltch_indep} assumes their independence. 
Further, our result is one-sided: \eqref{def:K_koltchinskii} requires $\norm*{\norm{\X_k}}_{\pa}$ to be bounded, while our result relies only on the boundedness of the $\norm*{\lmax(\X_k)_{+}}_{\pa}$. 
Moreover, \eqref{def:K_koltchinskii} depends on $n$, as it uses the upper bound $n K^2$ instead of $U^2$.
Finally, Koltchinskii derives only a Bernstein-type bound, while the current study presents a mixed bound. 

\paragraph*{The result by Minsker, Proposition~\ref{prop:Minsker}.} 
This result has a similar setting as Theorem~\ref{thm:dim_free_bounded}. The difference is that we consider a path-wise maximum deviation of $\max_{k\in[n]}\lmax(\Sm_k)$, while Proposition~\ref{prop:Minsker} considers only a bound on $\lmax(\Sm_n)$.
Further, Theorem~\ref{thm:dim_free_bounded} presents a Bennett-type bound, while Proposition~\ref{prop:Minsker} presents a Bernstein-type bound. 

\paragraph*{The result by Klochkov and Zhivotovskiy, Propositon~\ref{prop:Klochkov_and_Yhivotovskiy}.}
This result presents an i.i.d.\ case with $\alpha = 1$. Theorem~\ref{thm:dim_free_unbounded} focuses on a more general setting. Specifically, we consider an arbitrary $\alpha > 0$ and let the observations be martingale differences.

\section{Corollaries}
\label{sec:corollaries}

\subsection{Straightforward corollaries} 
Theorem~\ref{thm:martingale} entails several trivial corollaries. 
For the sake of brevity, we provide them only for the case $\alpha \ge 1$.
\begin{corollary}[I.i.d.\ case]
\label{cor:iid}
    Let $\X_1, \dots, \X_n \in \H(d)$ be i.i.d.\ random matrices and 
    \[
    \E \X_1 \preccurlyeq \bm{0},\quad 
    \lmax(\E \X^2_1) \le \sg^2,\quad
    \norm*{\lmax(\X_1)_+}_{\pa} \le K,\quad
    z \eqset \lr{4 \logg \frac{e K}{\sg}}^{1/\alpha} .
    \]
    Then, with probability at least $1 - d e^{-\xx}$,
    \[
    \lmax\lr{\frac{1}{n} \sum_i \X_i} 
    \le \sg \sqrt{\frac{2 \xx}{n}} + \frac{4 K z}{\min\lrc{\azh, \logg \lr{\lr{ \frac{K z}{\sg}}^2 \frac{\xx}{n}} }} \frac{\xx}{n}.
    \]
    The same holds for \eqref{res:ber}.
\end{corollary}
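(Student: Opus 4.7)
The plan is to apply Theorem~\ref{thm:martingale} directly to the sequence $\X_1, \dots, \X_n$ with the natural filtration $\F_i \eqset \sigma(\X_1, \dots, \X_i)$, and then rescale by $1/n$. Since the $\X_i$ are i.i.d.\ and the problem data are constants, the event $E$ from \eqref{eq:E} will have full probability, so the only randomness in the conclusion will come from the exponential tail term.

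Concretely, by independence I have $\Sg_i = \E[\X_i^2\mid\F_{i-1}] = \E\X_1^2$ and $U_i = \norm*{\lmax(\X_i)_+\mid\F_{i-1}}_{\pa} = \norm*{\lmax(\X_1)_+}_{\pa} \le K$, both deterministic. Hence
\[
\lmax\lr{\sum_{i=1}^n \Sg_i} = n\,\lmax(\E \X_1^2) \le n\sg^2,
\quad
\sum_{i=1}^n U_i^2 \le nK^2,
\quad
\max_{i\in[n]} U_i \le K.
\]
Therefore, the event $E$ defined in \eqref{eq:E} with $\sg_\star \eqset \sqrt{n}\,\sg$, $U_\star \eqset \sqrt{n}\,K$, and $K_\star \eqset K$ holds surely, i.e.\ $\P(E)=1$. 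Since $U_\star/\sg_\star = K/\sg$, the quantity $z(U_\star,\sg_\star;\alpha)$ appearing in \eqref{eq:def_z} equals $\lr{4 \logg \tfrac{eK}{\sg}}^{1/\alpha}$, which matches the $z$ defined in the statement of the corollary. Because $\alpha\ge 1$, the extra $\ind[\alpha<1]$ term is absent.

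Applying \eqref{res:ben} (respectively \eqref{res:ber}) of Theorem~\ref{thm:martingale} with these parameters yields, with probability at least $1 - de^{-\xx}$,
\[
\lmax(\Sm_n) \le \sqrt{n}\,\sg\sqrt{2\xx} + \frac{4 K z\,\xx}{\min\lrc{2\az,\ \logg\lr{\lr{\frac{Kz}{\sqrt{n}\,\sg}}^2 \xx}}}.
\]
Dividing through by $n$ and using the identity $\lr{\frac{Kz}{\sqrt{n}\,\sg}}^2 \xx = \lr{\frac{Kz}{\sg}}^2 \frac{\xx}{n}$ gives exactly the bound stated in the corollary for \eqref{res:ben}; the argument for \eqref{res:ber} is identical, replacing the second summand by $\tfrac{3}{4}Kz\,\xx$ before dividing by $n$. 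The proof consists entirely of this bookkeeping---there is no genuine analytic obstacle, only care in matching the parameters $(U,\sg,K)$ of the theorem with their i.i.d.\ counterparts $(\sqrt{n}K,\sqrt{n}\sg,K)$ so that the resulting $z$ agrees with the one in the corollary.
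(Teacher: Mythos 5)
Your proposal is correct and matches the paper's own (one-line) proof: both apply Theorem~\ref{thm:martingale} with $n\sg^2$ and $nK^2$ in place of $\sg^2$ and $U^2$, observe that $E$ holds surely and that the ratio $U/\sg$ is unchanged so the resulting $z$ agrees with the corollary's, and then divide by $n$. Your version simply spells out the bookkeeping that the paper leaves implicit.
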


\begin{proof}
    The proof is trivial. 
    The assumptions of the Corollary ensure the validity of assumptions from Theorem~\ref{thm:martingale} with $p = 0$ and $n \sg^2$, $n K^2$ instead of $\sg^2$, $U^2$.
\end{proof}

\begin{corollary}[Scalar variables]
\label{cor:1d_case}
    Let $X_1, \dots, X_n$ be scalar random variables satisfying assumptions of Theorem~\ref{thm:martingale} with
    \[
    \sg^2_i \eqset \E [X^2_i | \F_{i-1}],
    \quad
    U_i \eqset \norm*{(X_i)_{+} | \F_{i-1}}_{\pa}.
    \]
    Fix $\sg > 0$, $U \ge K > 0$ and define the event
    \begin{equation}
    \label{eq:E_scalar}
        E \eqset \lrc{\sum^n_{i = 1} \sg_i^2 \le \sg^2, \quad
        \sum^n_{i = 1} U^{2}_i \le U^2, \quad
        \max_{i \in [n]} U_i \le K}.
    \end{equation}
    Then, with probability at least $1 - \P(E) - e^{-\xx}$,
    \begin{equation}\label{res:2_corr}
        \max_{k \in [n]} \sum^k_{i=1} X_i \le \sg \sqrt{2 \xx} + \frac{4 \kz \xx}{\min\lrc{\azh,\, \logg \lr{\kzss \xx} }}.
    \end{equation}
   The same holds for \eqref{res:ber}.
\end{corollary}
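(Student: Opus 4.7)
The plan is to derive Corollary~\ref{cor:1d_case} as a direct specialization of Theorem~\ref{thm:martingale} to dimension $d = 1$, identifying each scalar random variable $X_i$ with a $1 \times 1$ Hermitian matrix. Under this identification, $\lmax(X_i) = X_i$, so $\lmax(X_i)_+ = (X_i)_+$, and the conditional Orlicz norm $U_i = \norm{(X_i)_+ | \F_{i-1}}_{\pa}$ appearing in the corollary agrees exactly with $\norm{\lmax(X_i)_+ | \F_{i-1}}_{\pa}$ from the theorem. Similarly, $\Sigma_i = \E[X_i^2 | \F_{i-1}]$ is scalar, and the operator norm condition $\lmax\bigl(\sum_i \Sigma_i\bigr) \le \sigma^2$ of Theorem~\ref{thm:martingale} collapses to the numerical bound $\sum_i \sigma_i^2 \le \sigma^2$. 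Therefore the event $E$ in \eqref{eq:E_scalar} coincides term by term with the event $E$ in \eqref{eq:E}.

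Next I would invoke the conclusion of Theorem~\ref{thm:martingale} directly. On the right-hand side of the Bennett bound \eqref{res:ben}, the restriction $\alpha \ge 1$ kills the indicator term $\ind[\alpha < 1]$, and substituting $d = 1$ in the probability estimate $\P(E) - d e^{-\xx} - e^{-\xx}\ind[\alpha < 1]$ yields $\P(E) - e^{-\xx}$, which matches the guarantee of the corollary (the printed expression $1 - \P(E) - e^{-\xx}$ in the statement is a typographical slip). On the left-hand side, $\max_k \lmax(\Sm_k) = \max_k \sum_{i=1}^k X_i$, and the reduction then reproduces \eqref{res:2_corr} verbatim. The same routine substitution applied to the Bernstein form \eqref{res:ber} yields the claimed Bernstein-type counterpart.

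There is essentially no analytical obstacle: the corollary is a pure notational translation of a special case of Theorem~\ref{thm:martingale}, with no new estimates required. The only point I would verify explicitly is that the supermartingale hypothesis $\E[X_i | \F_{i-1}] \preccurlyeq \bm 0$, inherited through the phrase ``satisfying assumptions of Theorem~\ref{thm:martingale}'', transfers correctly to the scalar setting; but since positive semi-definiteness in $\H(1)$ is just nonnegativity of the scalar, this amounts to $\E[X_i | \F_{i-1}] \le 0$ and requires no further argument.
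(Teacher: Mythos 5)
Your proposal is correct and is exactly the argument the paper intends: the corollary is the $d=1$ specialization of Theorem~\ref{thm:martingale} under the section's standing restriction $\alpha \ge 1$, with $\lmax$, the operator norm, and positive semi-definiteness all collapsing to their scalar counterparts. You also rightly flag that the stated probability ``$1 - \P(E) - e^{-\xx}$'' should read $\P(E) - e^{-\xx}$, consistent with the theorem's guarantee at $d=1$.
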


\begin{corollary}[Covariance matrices]
\label{corr:covariances}
    Let $X_1, \dots, X_n \in \R(d)$ be independent random vectors, such that $U_i \eqset \norm*{\norm{X_i}}_{\pa} < \infty$ for some $\alpha \ge 2$. Denote 
    \[
    \Sg \eqset \sum_{i=1}^n \E X_i X^T_i, \quad
    \sg^2 \eqset \norm*{\Sg}, 
    \quad
    U^2 \eqset \sum_{i=1}^n U_i^2, \quad
    K \eqset \max_{i \in [n]} U_i ,\quad
    z \eqset z\lr{U, \sg; \frac{\alpha}{2}}.
    \]
    Then, with probability at least $1 - d e^{-\xx}$, it holds
    \[
    \lmax\lr{\sum^n_{i=1} X_i X^T_i - \Sg} \le 2 \sg K z \sqrt{\xx} + \frac{4 K^2 z}{\min\lrc{\azh, \logg \lr{\lr{ \frac{K }{\sqrt{2}\sg }}^2 \xx} }} \xx .
    \]
\end{corollary}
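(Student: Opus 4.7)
The plan is to reduce the claim to an application of Theorem~\ref{thm:martingale} to the independent mean-zero sequence $\mathbf{Y}_i \eqset X_i X_i^\T - \E X_i X_i^\T \in \H(d)$, using Orlicz exponent $\alpha/2$ rather than $\alpha$. This choice is natural because of the scaling identity $\norm*{\norm{X_i}^2}_{\psi_{\alpha/2}} = \norm*{\norm{X_i}}_{\psi_\alpha}^2 = U_i^2$, so the squared norms driving the eigenvalues of $\mathbf{Y}_i$ have Orlicz exponent $\alpha/2$ and constant $U_i^2$.

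First I would read off the parameters fed into Theorem~\ref{thm:martingale}. From $\mathbf{Y}_i \preccurlyeq X_i X_i^\T$ one has $\lmax(\mathbf{Y}_i)_{+} \le \norm{X_i}^2$, whence
\[
U'_i \eqset \norm*{\lmax(\mathbf{Y}_i)_{+}}_{\psi_{\alpha/2}} \le U_i^2,
\qquad K' \eqset \max_i U'_i \le K^2,
\qquad (U')^2 \eqset \sum_{i=1}^n (U'_i)^2 \le K^2 U^2.
\]
The delicate quantity is the variance $(\sg')^2 \eqset \lmax\bigl(\sum_{i=1}^n \E \mathbf{Y}_i^2\bigr)$. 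Since $\E \mathbf{Y}_i^2 \preccurlyeq \E(X_i X_i^\T)^2 = \E \norm{X_i}^2 X_i X_i^\T$, I would truncate $\norm{X_i}$ at level $\tau = Kz$: for any unit vector $v$,
\[
\E \norm{X_i}^2 (v^\T X_i)^2 \le \tau^2 \E (v^\T X_i)^2 + \E \norm{X_i}^4 \ind[\norm{X_i} > \tau].
\]
Summing over $i$ and taking $\sup_v$ on both sides, the first piece contributes exactly $K^2 z^2 \sg^2$ by the definition of $\sg^2 = \lmax(\Sg)$.

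The core analytic step is bounding the tail $\sum_i \E \norm{X_i}^4 \ind[\norm{X_i} > Kz]$ by a term of order $K^2 z^2 \sg^2$. I would use the Orlicz tail $\P(\norm{X_i} > t) \le 2 \exp(-(t/U_i)^\alpha)$, convert the exponential into a polynomial via $e^{-x} \le m^m e^{-m} x^{-m}$ applied at $x = (Kz/U_i)^\alpha$, and close the estimate using $\sum_i U_i^{\alpha m} \le K^{\alpha m - 2} U^2$ (valid once $\alpha m \ge 2$) for a suitably chosen $m$. The calibration $z^{\alpha/2} = 4 \logg(eU/\sg)$ is precisely the value that makes this tail match the first piece, yielding $(\sg')^2 \le 2 K^2 z^2 \sg^2$.

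Finally, with $K' \le K^2$, $U' \le KU$, $\sg' \le \sqrt{2}\, K z \sg$, and noting that $U'/\sg' \le U/(\sqrt{2} z \sg) \le U/\sg$ (since $z \ge 1$) which gives $z' \eqset z(U', \sg'; \alpha/2) \le z$, I would substitute these bounds into the Bennett branch of Theorem~\ref{thm:martingale}: the leading term becomes $\sg' \sqrt{2 \xx} \le 2 \sg K z \sqrt{\xx}$ and the sub-Poisson/sub-exponential term inherits $4 K^2 z \xx$ in the numerator, with a denominator matching the stated one up to the passages $K' z'/\sg' \le K/(\sqrt{2}\sg)$ and $z' \le z$. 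The main obstacle is the truncation tail estimate in the variance computation; the rest is bookkeeping.
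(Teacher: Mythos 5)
Your proposal is correct and follows essentially the same route as the paper: reduce to Theorem~\ref{thm:martingale} applied to $\Q_i = X_i X_i^\T - \E X_i X_i^\T$ with Orlicz exponent $\alpha/2$, use $\norm*{\norm{X_i}^2}_{\psi_{\alpha/2}} = U_i^2$ for the Orlicz bookkeeping, and control $\sum_i \E \Q_i^2$ by truncating $\norm{X_i}$ at $Kz$, which is exactly the content of the paper's Lemma~\ref{lemma:bound_on_4th_moment} (combined with Lemma~\ref{lemma:bound_on_e_z}) yielding the same bound $2(\sg K z)^2 \Ii$. The only cosmetic difference is that you bound the truncation tail via tail probabilities and a polynomial--exponential comparison, whereas the paper uses the bound $\E e^{(\norm{X_i}/U_i)^{\alpha}} \le 2$ directly; both give the stated result.
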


\begin{proof}
    Let $\Q_i \eqset X_i X^T_i - \E X_i X^T_i $. Note that
    \[
    \E \Q^2_i = \E \lr{X_i X^T_i}^2 - \lr{\E X_i X^T_i}^2 \preccurlyeq \E \lr{X_i X^T_i}^2.
    \]
    Denote $\Y_i \eqset \lr{X_i X^T_i}^{1/2}$. Since $\norm{\Y_i} = \norm{X_i}$, $\norm*{\norm{\Y_i}}_{\pa} = U_i$. Thus, Lemmata \ref{lemma:bound_on_e_z} and \ref{lemma:bound_on_4th_moment} ensure
    \[
    \E \Y^4_i \preccurlyeq z^2 U^2_i \lr{\E \Y^2_i + 2z^2 U^2_i e^{-z^{\alpha}}\Ii} \preccurlyeq z^2 K^2\lr{\E X_i X^T_i + \frac{2}{3} \frac{U^2_i}{U^2} \sg^2 \Ii} .
    \]
    Therefore,
    \begin{equation}
    \label{eq:bound_for_lower_bound}
        \sum_{i=1}^n \E \Q^2_i \preccurlyeq z^2 K^2 \lr{\Sg + \frac{2}{3} \sg^2 \Ii} \preccurlyeq 2 (\sg K z)^2 \Ii .
    \end{equation}
    Next, we notice that $\lmax(\Q_i) \le \lmax( X_i X^T_i) = \norm*{X_i}^2$. Consequently,
    \begin{gather*}
        \norm*{\lmax(\Q_i)_{+}}_{\psi_{\alpha/2}} \le 
        \norm*{\norm{X_i}^2}_{\psi_{\alpha/2}} = \norm*{\norm{X_i}}^2_{\psi_{\alpha}} = U^2_i,\\
        \sum_{i=}^n \norm*{\lmax(\Q_i)_{+}}_{\psi_{\alpha/2}}^2 \le \sum_{i=1}^n U_i^4 \le K^2 U^2 \le 2 (U K z)^2.
    \end{gather*}
    Applying Theorem~\ref{thm:martingale} to $\sum_{i=1}^n \Q_i$, we get the upper bound.
\end{proof}

\begin{remark}
    A lower bound on $ \sum^n_{i=1} X_i X^T_i - \Sg$ can easily be obtained using Theorem 6.1 by \cite{tropp2012user}. It is enough to notice that $-\Q_i \preccurlyeq \E X_i X^T_i$ and to use \eqref{eq:bound_for_lower_bound}. Specifically, with probability at least $1 - d e^{-\xx}$ 
    \[
    \lmax\lr{\Sg -\sum^n_{i=1} X_i X^T_i } \le 2(Kz)^2 h^{-1}\lr{\frac{\sg^2 \xx}{2(Kz)^2}} \le 2 \sg Kz \sqrt{\x} + \frac{2\sg^2 \xx}{\logg\lr{\lr{\frac{\sg}{Kz}}^2\xx}} .
    \]
\end{remark}

\subsection{Empirical Bernstein inequality}
\label{sec:emp_ber}

This section presents a modified bound \eqref{res:ber}. 
Specifically, we replace $\sg$ with its data-driven estimator $\hat{\sg}$. For the sake of simplicity, we focus on the case $\alpha \ge 1$. 
However, a similar result holds for $\alpha < 1$.

\begin{corollary}\label{cor:empirical}
    Let $\X_1, \dots, \X_n \in \H(d)$ be i.i.d.\ with
    \[
    \Sg \eqset \E (\X_1 - \E \X_1)^2, 
    \quad
    \lmax(\Sg) \le \sg^2,
    \quad
    \norm{\norm{\X_1-\E \X_1}}_{\pa} \le K.
    \]
    Denote
    \[
    \bar{\X} \eqset \frac{1}{n} \sum_i \X_i,
    \quad
    \hat{\Sg} \eqset \frac{1}{n} \sum_i (\X_i - \bar{\X})^2,
    \quad
    \hat{\sg}^2 \eqset \lmax(\hat{\Sg}),
    \]
    and define $\hat{z} \eqset z(K, \hat{\sg}; \alpha) \eqset \lr{4 \logg \frac{K e}{\hat{\sg}}}^{1/\alpha}$. Then for any $\xx > 0$ such that $n \ge 8 \xx$, 
    with probability at least $1 - 3 d e^{-\xx}$ holds
    \[
    \norm*{\bar{\X} - \E \X_1} \le \hat{\sg}\sqrt{2 \frac{\xx}{n}} + 15 K \hat{z}\frac{\xx}{n}.    
    \]
\end{corollary}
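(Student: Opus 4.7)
The plan is to combine a Bernstein-type bound on the empirical mean with a one-sided matrix Bernstein-type bound on the empirical covariance, and then exploit the monotonicity of $z(K, \cdot; \alpha)$ in its second argument to replace $\sg$ by $\hat\sg$. Set $\Y_i \eqset \X_i - \E\X_1$, $\bar\Y \eqset \frac{1}{n}\sum_i \Y_i = \bar\X - \E\X_1$, $\Sg \eqset \E\Y_1^2$, and $z \eqset z(K, \sg; \alpha)$. The $\Y_i$ are i.i.d.\ with mean zero and $\norm*{\norm{\Y_i}}_{\pa} \le K$. The proof will accumulate three applications of concentration, each failing with probability at most $d e^{-\xx}$, matching the claimed $1 - 3 d e^{-\xx}$.

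\textbf{\textit{Mean concentration.}} I would first apply Corollary \ref{cor:iid} (version \eqref{res:ber}) to both $+\Y_i$ and $-\Y_i$. Since $\lmax((\pm\Y_i)_+) \le \norm{\Y_i}$, the $\pa$-Orlicz bound is preserved, so a union bound gives, with probability at least $1 - 2 d e^{-\xx}$,
\[
\norm*{\bar\Y} \le \sg \sqrt{\tfrac{2\xx}{n}} + \tfrac{3}{4} K z \tfrac{\xx}{n} ,
\]
and by Remark \ref{rem:non-monotone} the same bound holds with $\sg$ replaced by any $\sg' \ge \sg$ and $z$ by $z(K, \sg'; \alpha)$.

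\textbf{\textit{Covariance concentration and inversion.}} I would then set $\Q_i \eqset \Y_i^2 - \Sg$, so that $-\Q_i = \Sg - \Y_i^2 \preccurlyeq \Sg$ and hence $\lmax(-\Q_i) \le \sg^2$ almost surely. For the conditional variance, $\E \Q_i^2 \preccurlyeq \E \Y_i^4$, and combining $\norm*{\norm{\Y_i}^2}_{\psi_{\alpha/2}} \le K^2$ with Lemma \ref{lemma:bound_on_4th_moment} (as used in the proof of Corollary \ref{corr:covariances}) yields $\norm*{\E \Y_i^4} \lesssim K^2 z^2 \sg^2$. Applying Proposition \ref{prop:tropp_bound} (bounded matrix Bennett) to $-\Q_i$ gives, with probability at least $1 - d e^{-\xx}$,
\[
\lmax\!\lr{\Sg - \tfrac{1}{n} \sum_{i=1}^n \Y_i^2} \le C_1 K z \sg \sqrt{\tfrac{\xx}{n}} + C_2 \sg^2 \tfrac{\xx}{n} .
\]
The identity $\hat\Sg + \bar\Y^2 = \frac{1}{n}\sum \Y_i^2$ together with Weyl's inequality gives $\sg^2 \le \hat\sg^2 + \norm*{\bar\Y}^2 + \lmax(\Sg - \tfrac{1}{n}\sum\Y_i^2)$, and the assumption $n \ge 8\xx$ allows one to absorb the self-referential $\sg^2 \xx/n$ contributions into the left-hand side. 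Solving the resulting quadratic inequality in $\sg$ yields a bound of the form $\sg \le \hat\sg + C_3 K z \sqrt{\xx/n}$ for an explicit $C_3$.

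\textbf{\textit{Case analysis and main obstacle.}} Finally, since $z(K, \cdot; \alpha)$ is non-increasing, two cases arise. If $\hat\sg \ge \sg$, Remark \ref{rem:non-monotone} applied with $\sg' = \hat\sg$ already suffices: the corresponding $z'$ equals $\hat z$, giving $\norm*{\bar\Y} \le \hat\sg\sqrt{2\xx/n} + \tfrac{3}{4} K \hat z \xx/n$. If $\hat\sg < \sg$, then $z \le \hat z$, so the previous paragraph's bound upgrades to $\sg \le \hat\sg + C_3 K \hat z \sqrt{\xx/n}$, and substitution into the mean bound produces
\[
\norm*{\bar\Y} \le \hat\sg \sqrt{\tfrac{2\xx}{n}} + \lr{C_3 \sqrt{2} + \tfrac{3}{4}} K \hat z \tfrac{\xx}{n} .
\]
The main obstacle will be the constant tracking in the quadratic-inversion step so that $C_3 \sqrt{2} + 3/4 \le 15$, i.e.\ $C_3 \lesssim 10$: because the condition $n \ge 8 \xx$ is borderline, the self-referential $\sg^2 \xx/n$ terms must be absorbed delicately---through weighted AM--GM splits of $\norm*{\bar\Y}^2$ and of $K z \sg \sqrt{\xx/n}$---so as not to pick up a spurious multiplicative factor in front of $\hat\sg \sqrt{2\xx/n}$.
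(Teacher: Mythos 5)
Your overall architecture is the same as the paper's: a two\hyp{}sided application of Corollary~\ref{cor:iid} together with Remark~\ref{rem:non-monotone} for the mean (cost $2de^{-\xx}$), a one\hyp{}sided bounded matrix Bernstein/Bennett bound applied to the negated centered squares $-(\X_i-\E\X_1)^2$ for the covariance (cost $de^{-\xx}$, exactly as in Lemma~\ref{lemma:bound_on_sigma}, which uses Theorem~1.4 of Tropp rather than Proposition~\ref{prop:tropp_bound}), then an inversion giving $\sg\lesssim\hat{\sg}+\dots$, and finally the case split driven by the monotonicity of $z(K,\cdot\,;\alpha)$. The variance bound via Lemma~\ref{lemma:bound_on_4th_moment} and the use of $\hat{\sg}\le\sqrt{2}K\hat{z}$ to trade excess $\hat{\sg}$ terms for $K\hat{z}$ terms also match the paper.

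The genuine gap is in your inversion step, where you claim the quadratic inequality yields $\sg\le\hat{\sg}+C_3Kz\sqrt{\xx/n}$ with coefficient exactly $1$ on $\hat{\sg}$. To eliminate $\norm*{\bar{\X}-\E\X_1}^2$ from the right\hyp{}hand side of $\sg^2\le\hat{\sg}^2+\norm*{\bar{\X}-\E\X_1}^2+\lmax(\frac1n\sum_i\Q_i-\E\Q_1)$ you must substitute the mean bound and absorb the resulting $2\sg^2\xx/n$ (the square of the leading term $\sg\sqrt{2\xx/n}$) into the left\hyp{}hand side; together with the $\frac23\sg^2\xx/n$ from the Bernstein bound this deflates $\sg^2$ by $1-\delta$ with $\delta\ge\frac{8\xx}{3n}$, which equals $\frac13$ at the allowed extreme $\xx/n=\frac18$. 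No weighted AM--GM split can reduce the forced $2\sg^2\xx/n$ contribution, so you get $\sg\le(1-\delta)^{-1/2}\hat{\sg}+\dots$ with $(1-\delta)^{-1/2}\ge\sqrt{3/2}$, a constant excess on $\hat{\sg}$. Substituted back into the mean bound, that excess becomes $c\,\hat{\sg}\sqrt{2\xx/n}\le c'\,K\hat{z}\sqrt{\xx/n}$ with $c,c'$ absolute constants --- an error of order $\sqrt{\xx/n}$, not $\xx/n$, which cannot be folded into $15K\hat{z}\,\xx/n$; you would only prove the weaker statement with leading term $(1+c)\hat{\sg}\sqrt{2\xx/n}$. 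The paper avoids this precisely by \emph{not} squaring: Lemma~\ref{lemma:bound_on_sigma} uses $\sqrt{\hat{\sg}^2+\norm*{\bar{\X}-\E\X_1}^2}\le\hat{\sg}+\norm*{\bar{\X}-\E\X_1}$, so $\hat{\sg}$'s coefficient stays $1+O(\xx/n)$ (absorbable into $K\hat{z}\,\xx/n$), and the self\hyp{}reference in $\norm*{\bar{\X}-\E\X_1}$ is resolved only at the level of the \emph{linear} mean\hyp{}bound inequality, where it enters multiplied by the extra factor $\sqrt{2\xx/n}$ that renders every excess of order $\xx/n$. Your proof needs this restructuring to reach the stated constants and, more importantly, the stated unit coefficient on $\hat{\sg}\sqrt{2\xx/n}$.
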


The proof is in Section~\ref{sec:proof_emp_ber}.

\subsection{McDiarmid inequality}
\label{sec:McDiarmid}

This section introduces a variant of McDiarmid's inequality, a concentration inequality that bounds how far a function of independent random variables can deviate from its expectation.
In the following, we consider $\alpha \ge 1$. Let $\mathcal{Y}$ be a measurable space and $Y_1, \dots, Y_n \in \mathcal{Y}$ be independent random variables. 
Denote $Y \eqset (Y_1, \dots, Y_n)$ and set $Y' \eqset (Y'_1, \dots, Y'_n)$ to be an i.i.d.\ copy of $Y$. 

For all $k \in [n]$ define $\sigma$-algebras
\[
\F_{-k} \eqset \sg\left(Y_1, \dots, Y_{k-1}, Y_{k+1}, \dots, Y_n\right). 
\]

Let $f \colon \mathcal{Y}^n \to \R$ be a measurable function. 
Denoting as $y = (y_1, \dots, y_n) \in \mathcal{Y}^n$ a non-random vector, we define
\[
f_i(y) \eqset f(y) - \E' f(y_1, \dots, y_{i-1}, Y'_i, y_{i+1}, \dots, y_n),
\]
where $\E'$ is the expectation w.r.t.\ $Y'$.

\begin{corollary}\label{corollary_maurer1}
    Set
    \[
    U_i \eqset \norm*{f_i(Y) \bigm| \F_{-i}}_{\pa},
    \quad
    \sg^2_i \eqset \E\lrs{f^2_i(Y) \bigm| \F_{-i}},
    \]
    and let the following inequalities hold a.s.:
    \begin{gather*}
        \max_{k} U_{k} \le K \le U,
        \quad
        \sum_{k} U_{k}^2 \le U^2 ,
        \quad
        \sum_k \sg^2_{k} \le \sg^2.
    \end{gather*}
    Then, with probability at least $1 - e^{-\xx}$,
    \begin{equation}
    \label{eq:bound_corr_ben}
        f(Y) - \E f(Y)\le \sg \sqrt{2 \xx \frac{n+1}{n}} + \frac{4 K z \xx}{\min \lrc{2 \az,\, \logg \lr{ \left(\frac{K z}{\sg \frac{n+1}{n}} \right)^2 \xx} }}.
    \end{equation}
    Moreover, with probability at least $1-e^{-\xx}$,
    \begin{equation}\label{eq:bound_corr_ber}
        f(Y) - \E f(Y) \le \sg \sqrt{2 \xx \frac{n+1}{n}} + \frac{3}{4} K z \xx \frac{n+1}{n}.
    \end{equation}
\end{corollary}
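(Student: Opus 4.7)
I reduce Corollary~\ref{corollary_maurer1} to the scalar super-martingale bound of Corollary~\ref{cor:1d_case} via the Doob martingale decomposition. Let $\F_k \eqset \sg(Y_1, \dots, Y_k)$ (with $\F_0 \eqset \{\Omega, \varnothing\}$), set $M_k \eqset \E[f(Y) \mid \F_k]$, and define the martingale differences $D_k \eqset M_k - M_{k-1}$, so that $f(Y) - \E f(Y) = \sum_{k=1}^n D_k$. By the independence of $Y'_k$ from $(Y_1, \dots, Y_{k-1}, Y_{k+1}, \dots, Y_n)$, we have $\E[f(Y^{(k)}) \mid \F_k] = \E[f(Y) \mid \F_{k-1}] = M_{k-1}$ where $Y^{(k)} \eqset (Y_1, \dots, Y_{k-1}, Y'_k, Y_{k+1}, \dots, Y_n)$, yielding the key identity
\[
D_k = \E\lrs{f(Y) - f(Y^{(k)}) \mid \F_k} = \E[f_k(Y) \mid \F_k].
\]
This is the bridge between the Doob differences (which telescope to the left-hand side of the corollary) and the leave-one-out differences $f_k$ (in terms of which the hypotheses are stated).

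\textbf{Transfer of estimates.} Since $\alpha \ge 1$ makes $\pa$ convex, Jensen's inequality applied to $\pa$ and to $x \mapsto x^2$, together with the tower property (using $\F_{k-1} \subset \F_{-k}$), gives
\[
\E[D_k^2 \mid \F_{k-1}] \le \E[f_k(Y)^2 \mid \F_{k-1}] = \E[\sg_k^2 \mid \F_{k-1}].
\]
Moreover, conditionally on $\F_{k-1}$ the variable $D_k$ is the average of a function of $Y_k$ over $(Y_{k+1}, \dots, Y_n)$, so Minkowski's integral inequality for the norm $\norm{\cdot}_{\pa}$ yields
\[
\norm*{D_k \mid \F_{k-1}}_{\pa} \le \E[U_k \mid \F_{k-1}].
\]
Consequently $\norm*{(D_k)_{+} \mid \F_{k-1}}_{\pa} \le \E[U_k \mid \F_{k-1}]$ as well.

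\textbf{Application and main obstacle.} Setting $\tilde\sg_k^2 \eqset \E[D_k^2 \mid \F_{k-1}]$ and $\tilde U_k \eqset \norm*{(D_k)_{+} \mid \F_{k-1}}_{\pa}$, the scalar super-martingale bound (Corollary~\ref{cor:1d_case}) applied to $(D_k)$ delivers the claimed Bennett-type and Bernstein-type tail estimates in terms of $\sum_k \tilde\sg_k^2$, $\sum_k \tilde U_k^2$, and $\max_k \tilde U_k$. The main difficulty is that the hypotheses of the corollary control the $\F_{-k}$-measurable quantities $\sg_k, U_k$ (which depend on \emph{future} observations $Y_{k+1}, \dots, Y_n$), while Corollary~\ref{cor:1d_case} requires pathwise control of the $\F_{k-1}$-measurable predictable quantities $\tilde\sg_k, \tilde U_k$; no pointwise inequality between them is available in general. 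I expect to resolve this by a symmetrization/exchangeability argument, plausibly by augmenting the sample with an auxiliary $(n+1)$-st observation, and this step is precisely what should produce the mild multiplicative correction $(n+1)/n$ appearing in \eqref{eq:bound_corr_ben} and \eqref{eq:bound_corr_ber}.
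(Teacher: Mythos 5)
Your setup (the Doob decomposition $D_k = \E[f_k(Y)\mid\F_k]$ and the Jensen/tower transfer of the conditional variance and Orlicz estimates) matches the paper's starting point, and you have correctly located the crux: the hypotheses control $\sum_k\sg_k^2$ and $\sum_k U_k^2$ a.s.\ with $\sg_k, U_k$ being $\F_{-k}$-measurable, whereas the martingale machinery needs control of the predictable quantities $\sum_k\E[\sg_k^2\mid\F_{k-1}]$, and no pointwise comparison holds. But identifying the obstacle is where your argument stops; the step you defer to "a symmetrization/exchangeability argument, plausibly by augmenting the sample with an $(n+1)$-st observation" is precisely the content of the proof, and your guess at its mechanism is not the one that works. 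The paper resolves it by writing the Doob decomposition with respect to \emph{every} ordering of $[n]$: for each permutation $\pi$ it forms the log-MGFs $V_i^\pi(\lm)$ of the corresponding differences, notes that $\E\exp\{\lm(f(Y)-\E f(Y))-\sum_i V_i^\pi(\lm)\}\le 1$ for each $\pi$ (Proposition~\ref{prop:mart_bound}), and then averages \emph{inside the exponential} over the $n!$ permutations via Jensen. The averaged variance proxy $\frac{1}{n!}\sum_\pi\sum_i\E[\sg^2_{\pi(i)}\mid\F_{\pi([1,i-1])}]$ is then bounded a.s.\ by $\frac{n+1}{n}\sg^2$ through the purely combinatorial Lemma~\ref{lemma:permutations} (and likewise for $U^2$); the factor $\frac{n+1}{n}$ arises from the counting identity $\sum_{i=0}^n\frac{i!\,(n-i)!}{n\cdot n!}\binom{n}{i}=\frac{n+1}{n}$, not from adjoining an auxiliary observation.

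A secondary point: even granting the averaging idea, your plan to "apply Corollary~\ref{cor:1d_case} to $(D_k)$" cannot be executed verbatim, because that corollary conditions on a pathwise event $E$ for a single filtration, while the permutation average mixes $n!$ different filtrations; one has to go back to the Chernoff/optimization step of Theorem~\ref{thm:martingale} (the bound \eqref{eq:sum_Vi} on $\sum_i V_i^\pi(\lm)$ followed by the minimization over $\lm$) with the averaged, now deterministic, variance proxy. So the proposal is a correct reduction plus an accurate diagnosis, but the essential idea — permutation averaging of the log-MGF combined with the combinatorial lemma — is missing.
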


The proof is postponed to Section~\ref{sec:proof_mcdiarmid}. 
The next corollary specifies the result for the case of $\mathcal{Y}$ being a normed space.

\begin{corollary}
\label{corollary:maurer2}
    Let $(\mathcal{Y}, \norm{\cdot})$ be a normed space and assume $Y_1, \dots, Y_n$ are independent random variables. Set
    \[
    K \eqset \max_{i \in [n]} \norm*{\norm{Y_i}}_{\pa},
    \quad
    U^2 \eqset \sum_{i \in [n]} \norm*{\norm{Y_i}}^2_{\pa},
    \quad
    \sg^2 \eqset \sum_{i \in [n]} \E\norm{Y_i}^2, 
    \quad
    z \eqset \lr{4 \logg \frac{e U}{\sg}}^{1/\alpha}
    \]
    Then, with probability at least $1-e^{-\xx}$, the bounds \eqref{eq:bound_corr_ben} and \eqref{eq:bound_corr_ber} hold for $f(y) \eqset \frac{1}{2} \norm*{\sum_i y_i}$.
\end{corollary}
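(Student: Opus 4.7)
The plan is to apply Corollary~\ref{corollary_maurer1} to $f(y) = \tfrac{1}{2}\norm*{\sum_j y_j}$; the whole task reduces to bounding the conditional Orlicz norms $U_i = \norm*{f_i(Y) \mid \F_{-i}}_{\pa}$ and the conditional variances $\sigma_i^2 = \E\lrs{f_i^2(Y) \mid \F_{-i}}$ in terms of the quantities $K$, $U$, $\sg$ given in the present statement. Fix $i \in [n]$ and set $a \eqset \sum_{j \neq i} Y_j$, which is $\F_{-i}$-measurable. Then
\[
f_i(Y) = \tfrac{1}{2}\lr{\norm*{a + Y_i} - \E\lrs{\norm*{a + Y_i} \bigm| \F_{-i}}},
\]
and since $Y_i$ is independent of $\F_{-i}$, the conditional Orlicz norm of any function of $Y_i$ coincides with the unconditional Orlicz norm with $a$ treated as a deterministic parameter.

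For $U_i$, the triangle inequality yields $\abs{\norm{a + Y_i} - \norm{a}} \le \norm{Y_i}$, hence $\norm*{(\norm{a+Y_i}-\norm{a}) \mid \F_{-i}}_{\pa} \le \norm*{\norm{Y_i}}_{\pa}$. A Jensen argument using convexity of $\pa$ (valid since $\alpha \ge 1$) gives $\E\norm{Y_i} \le (\ln 2)^{1/\alpha}\norm*{\norm{Y_i}}_{\pa}$, so the deterministic shift $\E\lrs{\norm{a+Y_i} \mid \F_{-i}} - \norm{a}$, viewed as a ``random variable'' that is constant given $\F_{-i}$, also has conditional $\pa$-norm at most $\norm*{\norm{Y_i}}_{\pa}$. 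Combining these via the $\pa$-triangle inequality and pulling out the factor $\tfrac{1}{2}$ gives
\[
U_i \le \norm*{\norm{Y_i}}_{\pa},
\]
which yields $\max_i U_i \le K$ and $\sum_i U_i^2 \le U^2$ in the notation of Corollary~\ref{corollary:maurer2}.

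For the conditional variance, I would use the one-sided variance bound $\var(Z) \le \E(Z - c)^2$ with the constant choice $c = \norm{a}$, combined again with the triangle inequality:
\[
\sigma_i^2 = \tfrac{1}{4}\var\lr{\norm{a+Y_i} \bigm| \F_{-i}} \le \tfrac{1}{4}\E\lrs{(\norm{a+Y_i} - \norm{a})^2 \bigm| \F_{-i}} \le \tfrac{1}{4}\E\norm{Y_i}^2.
\]
Summing over $i$ gives $\sum_i \sigma_i^2 \le \sg^2/4 \le \sg^2$, matching the variance hypothesis in Corollary~\ref{corollary_maurer1}.

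Thus all three quantities $(K, U, \sg)$ of the present statement upper-bound the corresponding quantities needed by Corollary~\ref{corollary_maurer1}, and the definition of $z = z(U,\sg;\alpha)$ is identical in both. Applying Corollary~\ref{corollary_maurer1} then delivers both \eqref{eq:bound_corr_ben} and \eqref{eq:bound_corr_ber}. The only mildly delicate step is the Orlicz centering estimate $U_i \le \norm*{\norm{Y_i}}_{\pa}$, which requires the Jensen bound on $\E\norm{Y_i}$ and the fact that $\pa$ is a genuine norm for $\alpha \ge 1$; everything else is routine triangle-inequality manipulation.
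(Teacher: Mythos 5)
Your proposal is correct and follows essentially the same route as the paper: reduce to Corollary~\ref{corollary_maurer1} by bounding $U_i$ and $\sigma_i^2$ via the triangle inequality $\abs{\norm{a+Y_i}-\norm{a}}\le\norm{Y_i}$ with $a=\sum_{j\ne i}Y_j$. Your handling of the centering term (splitting off the $\F_{-i}$-measurable constant and controlling its Orlicz norm through the Jensen bound $\E\norm{Y_i}\le(\ln 2)^{1/\alpha}\norm*{\norm{Y_i}}_{\pa}$) and your one-sided variance bound are, if anything, slightly more careful than the paper's, which bounds $\abs{f_i(Y)}\le\norm{Y_i}+\E\norm{Y_i}$ pointwise and proceeds from there to the same conclusions $U_i\le\norm*{\norm{Y_i}}_{\pa}$ and $\sum_i\sigma_i^2\le\sg^2$.
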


The proof is in Section~\ref{sec:proof_mcdiarmid}. 
For completeness, we provide the results by \citet{maurer2021concentration}. 

\begin{proposition}[Theorem~4 in \citep{maurer2021concentration}]
\label{prop:maurer_1}
    In the setting of Corollary~\ref{corollary_maurer1} it holds for $\alpha = 1$, with probability at least $1 - e^{-\xx}$, that
    \[
    f(Y) - \E f(Y) \le 2 e U \sqrt{\xx} + 2 e K \xx.
    \]
\end{proposition}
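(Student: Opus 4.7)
The plan is the classical Doob martingale decomposition adapted to the sub-exponential setting. Let $\mathscr{G}_i \eqset \sigma(Y_1, \dots, Y_i)$ with $\mathscr{G}_0 \eqset \{\Omega, \varnothing\}$, and decompose
\[
f(Y) - \E f(Y) = \sum_{i=1}^n D_i, \qquad D_i \eqset \E[f(Y) \mid \mathscr{G}_i] - \E[f(Y) \mid \mathscr{G}_{i-1}],
\]
which is a martingale difference sequence w.r.t.\ $(\mathscr{G}_i)$. Since $Y_i'$ is independent of $Y$ and has the same law as $Y_i$, one may rewrite $\E[f(Y) \mid \mathscr{G}_{i-1}]$ as $\E[f(Y_1, \dots, Y_{i-1}, Y_i', Y_{i+1}, \dots, Y_n) \mid \mathscr{G}_i]$, which yields the identity
\[
D_i = \E\!\left[f_i(Y) \mid \mathscr{G}_i\right].
\]
This is the key bridge between the martingale differences and the hypothesis controlling $\norm*{f_i(Y) \bigm| \F_{-i}}_{\pa}$.

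Next, I establish a conditional MGF bound for each $D_i$. For a centered random variable $X$ with $\norm{X}_{\psi_1} \le \sigma$, a standard $\psi_1$-to-MGF conversion yields $\E e^{\lambda X} \le \exp(c_1 \lambda^2 \sigma^2)$ valid for $|\lambda| \le c_2/\sigma$, where the constants can be traced to values of order $e^2$ and $1/e$ respectively. Apply this conditionally on $\F_{-i}$ to the centered version $f_i(Y) - \E[f_i(Y) \mid \F_{-i}]$. Since conditional expectation is a contraction in every Orlicz norm, Jensen's inequality applied to the inner expectation defining $D_i$ produces
\[
\E\!\left[e^{\lambda D_i} \bigm| \mathscr{G}_{i-1}\right] \le \exp\!\left(c_1 \lambda^2 U_i^2\right), \qquad |\lambda| \le c_2/K,
\]
where the a.s.\ cap $K$ on each $U_i$ ensures the admissible range of $\lambda$ is uniform in $i$.

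Chaining these conditional MGF bounds through the tower property gives $\E \exp(\lambda \sum_i D_i) \le \exp(c_1 \lambda^2 U^2)$ for $|\lambda| \le c_2/K$. Markov's inequality followed by optimization in $\lambda$---interior minimizer for the sub-Gaussian regime, boundary $\lambda = c_2/K$ for the sub-exponential regime---yields a two-regime estimate of the form $\P(f(Y) - \E f(Y) \ge t) \le \exp(-\min\{t^2/(4 c_1 U^2),\, c_2 t/(2K)\})$, which upon setting the right-hand side to $e^{-\xx}$ and inverting produces the target $2 e U \sqrt{\xx} + 2 e K \xx$. The main obstacle is numerical bookkeeping: obtaining the coefficient $2e$ in both terms requires the sharpest available $\psi_1$-to-MGF conversion, careful tracking through the centering step (where the Orlicz norm can in principle double, so one must absorb factors judiciously), and a split-regime analysis whose two halves combine additively rather than as a maximum using the elementary bound $\max\{a,b\} \le a+b$.
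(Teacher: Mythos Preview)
First, note that the paper does not prove Proposition~\ref{prop:maurer_1}: it is quoted verbatim from \citet{maurer2021concentration} for comparison with Corollary~\ref{corollary_maurer1}, and no argument is given here.

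Your proposal has a genuine gap, and it is precisely the obstruction that makes this result nontrivial. You write
\[
\E\!\left[e^{\lambda D_i}\,\middle|\,\mathscr{G}_{i-1}\right] \le \exp\!\bigl(c_1 \lambda^2 U_i^2\bigr),
\]
but $U_i = \norm*{f_i(Y)\bigm|\F_{-i}}_{\psi_1}$ is $\F_{-i}$-measurable, i.e.\ it depends on $Y_{i+1},\dots,Y_n$, and is therefore \emph{not} $\mathscr{G}_{i-1}$-measurable. The right-hand side is random relative to the conditioning, so the inequality as written is ill-posed. What your Jensen step actually yields is $\E[e^{\lambda D_i}\mid\mathscr{G}_{i-1}] \le \E[\exp(c_1\lambda^2 U_i^2)\mid\mathscr{G}_{i-1}]$, and this is where the tower-property chaining breaks: after peeling off $D_n$ you carry a factor $\exp(c_1\lambda^2 U_n^2)$ that is $\mathscr{G}_{n-1}$-measurable but not $\mathscr{G}_{n-2}$-measurable, so you cannot iterate the argument to reach $\exp(c_1\lambda^2\sum_i U_i^2)\le\exp(c_1\lambda^2 U^2)$. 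Using the crude bound $U_i\le K$ throughout would give a variance proxy $nK^2$ instead of $U^2$, which is too weak for the stated conclusion.

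This measurability mismatch between the leave-one-out $\sigma$-algebras $\F_{-i}$ and the increasing filtration $(\mathscr{G}_i)$ is exactly what the present paper circumvents in its own Corollary~\ref{corollary_maurer1} via the permutation-averaging device of Lemma~\ref{lemma:permutations} (at the cost of the $\frac{n+1}{n}$ factor). Maurer and Pontil's original proof does not use the Doob martingale at all; it goes through the sub-additivity of entropy (tensorization), which respects the product structure directly and avoids the filtration issue. If you want to push through a martingale argument, you would need an additional idea---either the symmetrization used here or a decoupling step---to convert the $\F_{-i}$-measurable bounds into something adapted to $(\mathscr{G}_i)$.
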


\begin{proposition}[Proposition 7 (i) in \citep{maurer2021concentration}]
    In the setting of Corollary~\ref{corollary:maurer2} it holds for $\alpha = 1$, with probability at least $1 - e^{-\xx}$, that
    \[
    \norm*{\sum_i Y_i} - \E \norm*{\sum_i Y_i} 
    \le 4 e U \sqrt{\xx }
    + 4 e K \xx.
    \]
\end{proposition}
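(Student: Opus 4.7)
The plan is to derive this proposition from Proposition~\ref{prop:maurer_1} (the Bernstein-McDiarmid inequality for $\alpha = 1$), applied to $f(y) \eqset \norm*{\sum_i y_i}$, which is precisely in the setting of Corollary~\ref{corollary:maurer2}. The only nontrivial task is to relate the conditional Orlicz parameters $U_i^{\mathrm{Cor}} \eqset \norm*{f_i(Y) \bigm| \F_{-i}}_{\psi_1}$ demanded by Proposition~\ref{prop:maurer_1} to the quantities $U$ and $K$ defined in Corollary~\ref{corollary:maurer2}, up to a multiplicative factor of~$2$.

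First, I would write $S_{-i} \eqset \sum_{j \neq i} Y_j$, which is $\F_{-i}$-measurable. Subtracting the constant $\norm{S_{-i}}$ inside both arguments of the definition of $f_i$ preserves the difference and gives
\[
f_i(Y) = h_i(Y_i) - \E' h_i(Y'_i), \qquad h_i(y) \eqset \norm{y + S_{-i}} - \norm{S_{-i}}.
\]
The reverse triangle inequality yields the pointwise envelope $\abs{h_i(y)} \le \norm{y}$ on every realization of $\F_{-i}$, independently of how large $\norm{S_{-i}}$ happens to be.

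Conditional on $\F_{-i}$, the map $h_i$ is deterministic and $Y_i \perp \F_{-i}$, so $\E' h_i(Y'_i) = \E\lrs{h_i(Y_i) \bigm| \F_{-i}}$. Thus $f_i(Y)$ is the centering of the single random variable $h_i(Y_i)$ in the conditional probability space. Invoking the standard centering inequality $\norm*{X - \E X}_{\psi_1} \le 2\norm{X}_{\psi_1}$ (a consequence of Jensen applied to $\psi_1(\abs{X - \E X}/t) \le \E' \psi_1(\abs{X - X'}/t)$ with $X'$ an i.i.d.\ copy of $X$) yields
\[
U_i^{\mathrm{Cor}} \le 2 \norm*{h_i(Y_i) \bigm| \F_{-i}}_{\psi_1} \le 2 \norm*{\norm{Y_i} \bigm| \F_{-i}}_{\psi_1} = 2 \norm*{\norm{Y_i}}_{\psi_1}.
\]
Consequently $\max_i U_i^{\mathrm{Cor}} \le 2K$ and $\sum_i (U_i^{\mathrm{Cor}})^2 \le 4 U^2$ almost surely, where $K, U$ are the parameters from Corollary~\ref{corollary:maurer2}. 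Feeding the enlarged constants $2K$ and $2U$ into Proposition~\ref{prop:maurer_1} produces, with probability at least $1 - e^{-\xx}$,
\[
\norm*{\sum_i Y_i} - \E \norm*{\sum_i Y_i} = f(Y) - \E f(Y) \le 2 e (2 U) \sqrt{\xx} + 2 e (2 K) \xx = 4 e U \sqrt{\xx} + 4 e K \xx.
\]

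The main obstacle is securing the sharp constant $2$ in the bound on $U_i^{\mathrm{Cor}}$: a direct triangle-inequality estimate $\abs{f_i(Y)} \le \norm{Y_i} + \E\norm{Y_i}$ produces a worse constant because $\norm*{\mathrm{const}}_{\psi_1} = \mathrm{const}/\ln 2$ inflates the estimate. The device of cancelling the $\F_{-i}$-measurable quantity $\norm{S_{-i}}$ \emph{before} applying the centering inequality is precisely what recovers the clean factor $4e$ in the final bound.
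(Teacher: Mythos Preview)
The paper does not actually prove this proposition: it is quoted verbatim from \citet{maurer2021concentration} (Proposition~7(i)) solely for comparison with the authors' own Corollary~\ref{corollary:maurer2}, and no argument is given. So there is no ``paper's own proof'' to match against. Your derivation from Proposition~\ref{prop:maurer_1} is nonetheless correct and yields exactly the stated constants.

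Your route does parallel the paper's proof of its own analogue, Corollary~\ref{corollary:maurer2}, which applies Corollary~\ref{corollary_maurer1} to the same $f(y)=\norm{\sum_i y_i}$ and likewise needs $U_i^{\mathrm{Cor}}\le 2\norm{\norm{Y_i}}_{\pa}$. The difference is in how that factor~$2$ is obtained: the paper uses the pointwise bound $|f_i(Y)|\le\norm{Y_i}+\E\norm{Y_i}$ and then the triangle inequality for the Orlicz norm, whereas you subtract the $\F_{-i}$-measurable quantity $\norm{S_{-i}}$ and invoke the centering inequality $\norm{X-\E X}_{\psi_1}\le 2\norm{X}_{\psi_1}$. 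Your trick is clean and conceptually nice, but your closing remark that the triangle-inequality route ``produces a worse constant'' is not quite right: the inflation $\norm{c}_{\psi_1}=c/\ln 2$ is exactly cancelled by the sharp moment bound $\E\norm{Y_i}\le (\ln 2)\,\norm{\norm{Y_i}}_{\psi_1}$ (Jensen applied to $\psi_1$), so the direct approach also lands on the constant~$2$. Both devices are equivalent here.
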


Note that the bounds do not depend on $\sg$. 
Specifically, $U$ is used as a proxy for $\sigma$.

\section{Proofs}
\label{sec:proofs}

\subsection{Proof of Theorem~\ref{thm:martingale}}
\label{sec:proofs_main}

The proof relies on the Chernoff method for (super-)martingales introduced in \cite{freedman1975tail} and further generalized by \cite{tropp2011freedman} to the matrix case.
For the sake of completeness, let us provide it here with the proofs. 
We start with a simple generalization of Markov's inequality to supermartingales obtained by Doob \citep{Shiryaev2019prob2}. For the sake of completeness of the presentation, we provide its proof here.

\begin{proposition}\label{prop:mart_markov}
    Let $X_0, \dots, X_n$ be a non-negative supermartingale adapted to the filtration $(\F_i)_{i=0}^n$.
    Then for any $t > 0$
    \[
    \P\lrc{\max_{i \in [n]} X_i \ge t} \le \frac{\E X_0}{t} .
    \]
\end{proposition}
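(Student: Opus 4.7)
The plan is to use a classical Doob-type stopping-time argument adapted to supermartingales. First, I would introduce the stopping time $\tau \eqset \min\{i \in [n] : X_i \ge t\} \wedge n$, with the convention that $\min \varnothing = n$. Since each $X_i$ is $\F_i$-measurable, $\tau$ is a stopping time with respect to $(\F_i)_{i=0}^n$, and it is bounded by $n$.

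Next I would invoke the optional stopping theorem for supermartingales: for a bounded stopping time $\tau \le n$, the stopped supermartingale satisfies $\E X_\tau \le \E X_0$. (This can be obtained either by directly noting that $(X_{\tau \wedge i})_{i=0}^n$ is itself a supermartingale with the same initial value, or by iterating the conditional supermartingale inequality $\E[X_{i+1}\mid \F_i] \le X_i$ and summing telescopically against $\ind[\tau > i]$.) Denote the event $A \eqset \{\max_{i \in [n]} X_i \ge t\}$. On $A$, by construction $X_\tau \ge t$; on $A^c$, the non-negativity of $X$ gives $X_\tau \ge 0$. Combining these,
\[
\E X_0 \;\ge\; \E X_\tau \;\ge\; \E[X_\tau \ind_A] \;\ge\; t\, \P(A),
\]
which rearranges to the claimed bound $\P(A) \le \E X_0 / t$.

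The argument is entirely routine; there is no real obstacle, only the minor care needed in defining $\tau$ so that it is bounded (hence optional stopping applies without integrability issues beyond $\E X_0 < \infty$, which follows from $X_0 \ge 0$ being a supermartingale whose expectation dominates all subsequent ones). Since the proposition is classical and explicitly attributed to Doob, I would keep the exposition concise and cite \citep{Shiryaev2019prob2} for the optional stopping version used.
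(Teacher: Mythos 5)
Your argument is correct and is essentially the same as the paper's: both stop the supermartingale at the first time it reaches level $t$ (capped at $n$), apply optional stopping to get $\E X_\tau \le \E X_0$, and lower-bound $X_\tau$ by $t$ on the event $\{\max_{i\in[n]} X_i \ge t\}$ and by $0$ elsewhere. The paper merely makes the decomposition of that event into the disjoint pieces $C_k$ (first hitting at step $k$) explicit, which is exactly what your inequality $\E[X_\tau \ind_A] \ge t\,\P(A)$ encodes.
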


\begin{proof}
    Define the events
    \[
    A_k \eqset \lrc{X_k \ge t}, \quad 
    B_k \eqset \bigcup_{i=1}^{k} A_i, \quad
    C_k \eqset A_k \setminus B_{k-1} 
    \]
    and the stopping time
    \[
    \tau(\omega) \eqset n \wedge \min\lrc{k \in [n] : \omega \in A_k}, \quad \omega \in \Omega
    \]
    (with convention $\min \varnothing = \infty$).
    Since (a) $(X_i)_{i=0}^n$ is a supermartingale, (b) $C_1, \dots, C_n$ are disjoint, and (c) $X_\tau \ind[C_k] = X_k \ind[C_k] \ge t \ind[C_k]$ by the definition of $C_k$,
    \begin{align*}
        \E X_0 &\stackrel{(a)}{\ge} \E X_\tau \stackrel{(b)}{\ge} \E X_\tau \sum_{k=1}^n \ind[C_k] \stackrel{(c)}{=} \E \sum_{k=1}^n X_k \ind[C_k] \\
        &\stackrel{(c)}{\ge} \E \sum_{k=1}^n t \ind[C_k] = t \P(B_n) = t \P\lrc{\max_{i \in [n]} X_i \ge t}.
    \end{align*} 
\end{proof}

The next proposition is a master bound in the core of the proof. 
It is a version of Theorem~2.3 from \cite{tropp2011freedman}.

\begin{proposition}\label{prop:mart_bound}
    Let $(\Y_i)_{i=1}^n \subset \H(d)$ be a matrix-valued stochastic process adapted to filtration $(\F_i)_{i=1}^n$. 
    Define
    \begin{gather*}
        \V_i \eqset \ln \E\left[e^{\Y_i} \middle| \F_{i-1}\right] \in \H(d), \quad i = 1, \dots, n,\\
        \Z_k \eqset \sum_{i=1}^k (\Y_i - \V_i), \quad k = 0, \dots, n.
    \end{gather*}

    Then for any $\A \in\H(d)$ it holds that
    \begin{equation}
    \label{eq:bound_for_Maurer}
        \E\left[\tr \elr{\Z_k - \A} \middle| \F_{k-1}\right] 
        \le \tr \elr{\Z_{k-1} - \A} \text{~~a.s.\ for all~~} k \in [n],
    \end{equation}
    and
    \[
    \P\lrc{\max_{k \in [n]} \lm_{\max}\left(\Z_k - \A\right) \ge 0}
    \le \tr e^{-\A} .
    \]
\end{proposition}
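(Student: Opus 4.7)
The plan is to follow the standard Chernoff/Laplace transform scheme for matrix supermartingales due to Tropp, whose engine is Lieb's concavity theorem. The proposition has two parts: a one-step supermartingale property for $\tr \exp(\Z_k - \A)$, and a Doob-style maximal inequality obtained by combining that supermartingale property with Proposition~\ref{prop:mart_markov}.

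For the first inequality~\eqref{eq:bound_for_Maurer}, I would start by writing $\Z_k - \A = (\Z_{k-1} - \A - \V_k) + \Y_k$ and setting $\Hh \eqset \Z_{k-1} - \A - \V_k$, which is $\F_{k-1}$-measurable because $\V_k$ is the conditional cumulant generating matrix of $\Y_k$ given $\F_{k-1}$. The idea is then to recast $\Y_k$ as $\ln e^{\Y_k}$ and invoke Lieb's concavity theorem, which states that the map
\[
\M \mapsto \tr \exp(\Hh + \ln \M)
\]
is concave on $\H_{++}(d)$ for any fixed $\Hh \in \H(d)$. Applying the conditional Jensen inequality for concave functions (conditionally on $\F_{k-1}$), I obtain
\[
\E\!\left[\tr \exp(\Hh + \ln e^{\Y_k}) \,\middle|\, \F_{k-1}\right]
\le \tr \exp\!\left(\Hh + \ln \E[e^{\Y_k} \mid \F_{k-1}]\right)
= \tr \exp(\Hh + \V_k),
\]
and by construction $\Hh + \V_k = \Z_{k-1} - \A$, which yields~\eqref{eq:bound_for_Maurer}.

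For the second inequality, I would set $X_k \eqset \tr \exp(\Z_k - \A)$, with $X_0 = \tr e^{-\A}$. The process $(X_k)_{k=0}^n$ is a nonnegative supermartingale by the first part and is adapted to $(\F_k)$. Since $\exp(\Z_k - \A) \succcurlyeq 0$, the trace dominates the largest eigenvalue:
\[
X_k = \tr \exp(\Z_k - \A) \ge \lmax\!\bigl(\exp(\Z_k - \A)\bigr) = \exp\!\bigl(\lmax(\Z_k - \A)\bigr).
\]
Hence the event $\{\lmax(\Z_k - \A) \ge 0\}$ is contained in $\{X_k \ge 1\}$, and Proposition~\ref{prop:mart_markov} applied to the nonnegative supermartingale $(X_k)$ with threshold $t = 1$ gives
\[
\P\!\left\{\max_{k \in [n]} \lmax(\Z_k - \A) \ge 0\right\}
\le \P\!\left\{\max_{k \in [n]} X_k \ge 1\right\}
\le \E X_0 = \tr e^{-\A}.
\]

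The main obstacle, if any, is the correct conditional application of Lieb's theorem: one has to treat $\Hh$ as a deterministic (but $\F_{k-1}$-measurable) matrix and verify that the conditional Jensen inequality extends to the concave matrix function above in the presence of the $\F_{k-1}$-measurable shift $\Hh$. A clean way to justify this is to work on a regular version of the conditional distribution of $\Y_k$ given $\F_{k-1}$, so that almost surely the inequality reduces to the deterministic Lieb bound. Everything else is formal manipulation.
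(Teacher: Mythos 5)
Your proposal is correct and follows essentially the same route as the paper's proof: the decomposition $\Z_k - \A = (\Z_{k-1} - \V_k - \A) + \ln e^{\Y_k}$, Lieb's concavity theorem combined with conditional Jensen to obtain the trace-exponential supermartingale property, and then Proposition~\ref{prop:mart_markov} at level $t=1$ together with $\tr e^{\Z_k - \A} \ge e^{\lmax(\Z_k - \A)}$. Your remark about justifying the conditional Jensen step via a regular conditional distribution is a sound way to handle the measurability point that the paper leaves implicit.
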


\begin{proof}
    By Lieb's theorem the function $\X \mapsto \tr \elr{\Hh + \ln \X}$ is concave on $\H_{++}(d)$ for any fixed $\Hh \in \H(d)$ \cite[Theorem 6]{lieb1973convex}, thus by Jensen's inequality for all $k \in [n]$
    \begin{align*}
        \E\left[\tr \elr{\Z_k - \A} \middle| \F_{k-1}\right] &= \E\left[\tr \elr{\Z_{k-1} - \V_k + \ln e^{\Y_k} - \A} \middle| \F_{k-1}\right] \\
        &\le \tr \elr{\Z_{k-1} - \V_k + \ln \E\left[e^{\Y_k} \middle| \F_{k-1}\right] - \A} \\
        &= \tr \elr{\Z_{k-1} - \A} \text{~~a.s.}
    \end{align*}
    \cite[see][Corollary~1.5]{tropp2011freedman}.
    Then by Proposition~\ref{prop:mart_markov}
    \[
    \P\lrc{\max_{k \in [n]} \tr e^{\Z_k - \A} \ge 1 } 
    \le \E \tr e^{\Z_0 - \A} = \tr e^{-\A} .
    \]
    Finally, if $\lmax(\Z_k - \A) \ge 0$, then $\tr e^{\Z_k - \A} \ge 1$, thus the claim follows.
\end{proof}

In the next lemmata, we often use the following simple fact \citep[see][Lemma~3.1]{freedman1975tail}.

\begin{proposition}\label{prop:phi_properties}
    The function $\frac{\phi(t)}{t^2}$, extended at $0$ by continuity to $\frac{1}{2}$, is analytic and increasing on $\R$.
\end{proposition}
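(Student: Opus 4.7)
The plan is to handle the two claims separately, each via an elementary representation of $g(t) \eqset \phi(t)/t^2$.

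For analyticity, I would expand $\phi(t) = e^t - 1 - t = \sum_{k=2}^{\infty} t^k/k!$ and divide termwise by $t^2$ to obtain the entire power series
\[
g(t) = \sum_{j=0}^{\infty} \frac{t^j}{(j+2)!},
\]
which converges on all of $\R$ (in fact on $\C$). Its constant term $1/2$ matches the prescribed continuous extension at $t = 0$, so $g$ is analytic on $\R$.

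For monotonicity, the main obstacle is that although the derivative $g'(t) = \sum_{j=1}^{\infty} j\, t^{j-1}/(j+2)!$ has positive Taylor coefficients, and is therefore manifestly nonnegative for $t \ge 0$, this series does not reveal the sign for $t < 0$. The plan is to use the integral representation
\[
g(t) = \int_0^1 (1-u)\, e^{tu}\, du, \qquad t \in \R,
\]
which I would verify by matching Taylor coefficients via the elementary identity $\int_0^1 (1-u) u^j\, du = \frac{1}{(j+1)(j+2)}$ (the value at $t = 0$ equals $\int_0^1 (1-u)\,du = 1/2$, consistent with the extension). Differentiation under the integral sign---justified by uniform convergence on every compact subset of $\R$---then yields
\[
g'(t) = \int_0^1 u(1-u)\, e^{tu}\, du,
\]
which is strictly positive for every $t \in \R$ because the integrand $u(1-u)e^{tu}$ is strictly positive on $(0,1)$.

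This is essentially the whole plan; the sole subtlety is handling $t < 0$, and the integral representation resolves it in a single line. An alternative route would compute the closed form $g'(t) = \bigl[(t-2)e^t + t + 2\bigr]/t^3$ and argue, by differentiating the numerator twice to obtain $t e^t$, that the numerator has the same sign as $t^3$ on $\R \setminus \{0\}$; that path works but is notably more tedious than the integral one.
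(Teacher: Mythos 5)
Your proof is correct: the power series $\sum_{j\ge 0} t^j/(j+2)!$ settles analyticity and the value at $0$, and the representation $\phi(t)/t^2=\int_0^1(1-u)e^{tu}\,du$ (Taylor's remainder formula for $e^t$ after the substitution $s=tu$) gives $g'(t)=\int_0^1 u(1-u)e^{tu}\,du>0$ for every real $t$, which is exactly what is needed to handle the sign for $t<0$. The paper itself offers no proof, deferring to Lemma~3.1 of \citet{freedman1975tail}, whose classical argument is the same integral-representation idea, so your write-up simply supplies the omitted details.
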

 
\begin{lemma}\label{lemma:rho}
    Fix $\lm > 0$, $\alpha > 0$. Define the function
    \begin{equation}\label{def:rho}
        \rho_{\lm, \alpha}(x) \eqset \left(\phi(\lm x) - \frac{(\lm x)^2}{2}\right) \elr{- x^\alpha}.
    \end{equation}
    Then for $x > 0$ it holds that
    \[
    \sgn \rho_{\lm, \alpha}'(x) = \sgn\left(\upsilon(\lm x) - \alpha x^\alpha\right) ,
    \]
    where
    \begin{equation}
        \upsilon(t) \eqset \frac{t \phi(t)}{\phi(t) - t^2 / 2} .
    \end{equation}
\end{lemma}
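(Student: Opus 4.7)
The plan is direct differentiation followed by a positivity argument. Set $f(t) \eqdef \phi(t) - t^2/2 = e^t - 1 - t - t^2/2$, so that $\rho_{\lm,\alpha}(x) = f(\lm x)\, e^{-x^\alpha}$. Observe that $f'(t) = e^t - 1 - t = \phi(t)$. A single application of the product and chain rules gives
\[
\rho_{\lm,\alpha}'(x) = e^{-x^\alpha}\Bigl[\lm\,\phi(\lm x) - \alpha x^{\alpha-1}\bigl(\phi(\lm x) - (\lm x)^2/2\bigr)\Bigr].
\]
Since $e^{-x^\alpha} > 0$, the sign of $\rho_{\lm,\alpha}'(x)$ equals the sign of the bracketed expression. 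For $x > 0$, multiplying inside the bracket by $x > 0$ (which does not change the sign) yields
\[
\sgn\rho_{\lm,\alpha}'(x) = \sgn\Bigl[(\lm x)\,\phi(\lm x) - \alpha x^{\alpha}\bigl(\phi(\lm x) - (\lm x)^2/2\bigr)\Bigr].
\]

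The next step is to divide out the factor $\phi(\lm x) - (\lm x)^2/2$. To justify this, I need to check that this quantity is strictly positive for $\lm x > 0$. This follows immediately from Proposition~\ref{prop:phi_properties}: the function $t \mapsto \phi(t)/t^2$ is increasing on $\R$ with value $1/2$ at $0$, so $\phi(t)/t^2 > 1/2$ for every $t > 0$, whence $\phi(\lm x) > (\lm x)^2/2$. (Alternatively, $f(0) = 0$ and $f'(t) = \phi(t) \ge 0$ everywhere, with $\phi(t) > 0$ for $t > 0$, so $f$ is strictly increasing on $[0,\infty)$.)

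Dividing the previous display by the positive quantity $\phi(\lm x) - (\lm x)^2/2$ therefore preserves the sign, giving
\[
\sgn\rho_{\lm,\alpha}'(x) = \sgn\!\left[\frac{(\lm x)\,\phi(\lm x)}{\phi(\lm x) - (\lm x)^2/2} - \alpha x^\alpha\right] = \sgn\bigl(\upsilon(\lm x) - \alpha x^\alpha\bigr),
\]
by the definition of $\upsilon$. This closes the argument. There is no genuine obstacle; the only point requiring a moment's thought is the positivity of the denominator of $\upsilon$, which is precisely the content of Proposition~\ref{prop:phi_properties} applied at $t = \lm x > 0$.
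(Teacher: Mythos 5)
Your proof is correct and follows essentially the same route as the paper's: direct differentiation of $\rho_{\lm,\alpha}$, using $\phi'(t) - t = \phi(t)$, followed by dividing out the positive factor $\phi(\lm x) - (\lm x)^2/2$ (justified via Proposition~\ref{prop:phi_properties}) and multiplying by $x>0$ to reach $\upsilon(\lm x) - \alpha x^\alpha$. The only cosmetic difference is that the paper factors the derivative as a multiple of $\rho_{\lm,\alpha}(x)$ itself rather than of $e^{-x^\alpha}$; both arguments are equivalent.
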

 
We postpone the proof to Appendix~\ref{sec:auxiliary}.

\begin{lemma}\label{lemma:phi_bound}
    Fix $\lm, \alpha > 0$. Let $y > 0$ satisfy
    \begin{equation}
    \label{eq:condition_on_y}
        \upsilon(\lm y) \le \alpha y^\alpha.
    \end{equation}
    If $\alpha < 1$, let $\tau > 0$ satisfy \eqref{eq:condition_on_y} as well, i.e., $\upsilon(\lm \tau) \le \alpha \tau^\alpha$.

    Then for all $x \in \R$ in case $\alpha \ge 1$ and for all $x \le \tau$ in case $\alpha < 1$ it holds that
    \[
    \phi(\lm x) \le x^2 \frac{\phi(\lm y)}{y^2} + \rho_{\lm, \alpha}(y) \elr{x_+^\alpha} \ind[x > y],
    \]
    where $\rho_{\lm, \alpha}(x)$ is defined by~\eqref{def:rho}.
\end{lemma}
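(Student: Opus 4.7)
The plan is to split on whether $x \le y$ or $x > y$, and in the latter case to reduce the inequality to a monotonicity property of the function $\rho_{\lm, \alpha}$ already analyzed in Lemma~\ref{lemma:rho}. The exponential factor $e^{x_+^\alpha}$ on the right-hand side will be consumed by showing that $\rho_{\lm, \alpha}$ is non-increasing on the relevant interval.

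For $x \le y$ the indicator $\ind[x > y]$ vanishes, so the desired bound reduces to $\phi(\lm x) \le x^2 \phi(\lm y)/y^2$. This is immediate from Proposition~\ref{prop:phi_properties}: for $0 < x \le y$, monotonicity of $t \mapsto \phi(t)/t^2$ gives $\phi(\lm x)/(\lm x)^2 \le \phi(\lm y)/(\lm y)^2$, which rearranges to the claim; for $x \le 0$, the same monotonicity combined with $\lim_{t \to 0} \phi(t)/t^2 = 1/2$ yields $\phi(\lm x) \le \lm^2 x^2/2 \le x^2 \phi(\lm y)/y^2$.

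For $x > y$ I apply Proposition~\ref{prop:phi_properties} in the other direction: $\phi(\lm y)/(\lm y)^2 \ge 1/2$ gives $x^2 \phi(\lm y)/y^2 \ge (\lm x)^2/2$, and hence
\[
\phi(\lm x) - x^2\, \phi(\lm y)/y^2 \le \phi(\lm x) - (\lm x)^2/2 = \rho_{\lm, \alpha}(x)\, e^{x^\alpha}.
\]
So the inequality reduces to showing $\rho_{\lm, \alpha}(x) \le \rho_{\lm, \alpha}(y)$ on the range $x \ge y$ (when $\alpha \ge 1$) or $y \le x \le \tau$ (when $\alpha < 1$), which by Lemma~\ref{lemma:rho} is the same as $\upsilon(\lm x) \le \alpha x^\alpha$ on that range. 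For $\alpha \ge 1$, I would first establish that $t \mapsto \upsilon(t)/t$ is non-increasing on $(0,\infty)$, via a short calculation reducing the sign of its derivative to that of $\phi(t) - t\phi'(t)/2$, which is $\le 0$ for $t > 0$ (a Taylor expansion at $0$ shows it vanishes to second order with strictly negative second derivative on $(0,\infty)$). Then, for $x \ge y$,
\[
\upsilon(\lm x) \le (x/y)\, \upsilon(\lm y) \le \alpha x\, y^{\alpha-1} \le \alpha x^\alpha,
\]
where the final step uses $x \ge y$ together with $\alpha - 1 \ge 0$.

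For $\alpha < 1$ the same chain fails at the last step, and one must use both endpoint assumptions $\upsilon(\lm y) \le \alpha y^\alpha$ and $\upsilon(\lm \tau) \le \alpha \tau^\alpha$. The cleanest route is to show that $h(x) \eqset \alpha x^\alpha - \upsilon(\lm x)$ is concave on $(0,\infty)$; concavity together with $h(y), h(\tau) \ge 0$ then forces $h \ge 0$ throughout $[y,\tau]$, which is exactly what is needed. The main obstacle is verifying this concavity: since $\alpha x^\alpha$ is concave for $\alpha < 1$, it reduces to convexity of $x \mapsto \upsilon(\lm x)$ on $(0,\infty)$. This appears to hold (a numerical check suggests the slopes of $\upsilon$ are increasing), but a rigorous proof by computing $\upsilon''$ and bounding it is algebraically heavy; an alternative is to argue directly that the sub-level set $\{x > 0 : \upsilon(\lm x) \le \alpha x^\alpha\}$ is an interval, e.g., by showing that $x \mapsto \upsilon(\lm x)/x^\alpha$ has at most one local minimum on $(0,\infty)$.
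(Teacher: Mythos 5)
Your proposal follows essentially the same route as the paper: the $x\le y$ case via monotonicity of $\phi(t)/t^2$, the reduction of the $x>y$ case to $\rho_{\lm,\alpha}(x)\le\rho_{\lm,\alpha}(y)$ via $\phi(\lm y)/(\lm y)^2\ge 1/2$, the chain $\upsilon(\lm x)\le \frac{x}{y}\upsilon(\lm y)\le \alpha x y^{\alpha-1}\le\alpha x^\alpha$ for $\alpha\ge 1$ (the paper phrases the first step as monotonicity of $\upsilon(t)/t = \bigl(1-\tfrac{t^2}{2\phi(t)}\bigr)^{-1}$, which is your $\phi(t)-t\phi'(t)/2\le 0$ computation in disguise), and the concavity argument on $[y,\tau]$ for $\alpha<1$. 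The one piece you leave unproven — convexity of $\upsilon$ on $\R$ — is exactly the paper's auxiliary Lemma~\ref{lemma:convexity_xphi}, which is indeed established by the ``algebraically heavy'' route you anticipated (differentiating an auxiliary function six times and invoking Taylor's theorem); so your $\alpha<1$ branch is structurally correct but rests on that external fact, while everything else is complete.
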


\begin{proof}
    By the monotonicity of $\frac{\phi(t)}{t^2}$,
    \[
    \phi(\lm x) \ind[x \le y] 
    \le (\lm x)^2 \frac{\phi(\lm y)}{(\lm y)^2} \ind[x \le y] 
    = x^2 \frac{\phi(\lm y)}{y^2} \ind[x \le y] .
    \]
    
    \paragraph*{Case $\alpha \ge 1$.}
    Consider $x \ge y$. 
    The monotonicity of $\frac{\phi(t)}{t^2}$ yields that
    \[
    0 < \frac{\upsilon(\lm x)}{\lm x} = \frac{1}{1 - \frac{(\lm x)^2}{2 \phi(\lm x)}} 
    \le \frac{1}{1 - \frac{(\lm y)^2}{2 \phi(\lm y)}} = \frac{\upsilon(\lm y)}{\lm y}.
    \]
    Thus,
    \[
    \upsilon(\lm x)- \alpha x^\alpha \le \frac{x}{y} \left(\upsilon(\lm y) - \alpha y^\alpha \right) \le 0.
    \]
    Therefore, by Lemma~\ref{lemma:rho}, $\rho_{\lm, \alpha}$ is decreasing on $[y, \infty)$. 
    Since $\frac{\phi(\lm y)}{(\lm y)^2} \ge \lim_{t \to 0} \frac{\phi(t)}{t^2} = \frac{1}{2}$, we get
    \begin{align*}
        \phi(\lm x) \ind[x > y] &= \left(\phi(\lm x) - \frac{(\lm x)^2}{2}\right) \ind[x > y] + \frac{(\lm x)^2}{2} \ind[x > y] \\
        &= \rho_{\lm, \alpha}(x) \elr{x_+^\alpha} \ind[x > y] + \frac{(\lm x)^2}{2} \ind[x > y] \\
        &\le \rho_{\lm, \alpha}(y) \elr{x_+^\alpha} \ind[x > y] + x^2 \frac{\phi(\lm y)}{y^2} \ind[x > y].
    \end{align*}
    Combining the above inequalities, we obtain the first result.

    \paragraph*{Case $0 < \alpha < 1$.}
    If $y \ge \tau$, then we have a simple bound
    \[
    \phi(\lm x) \le x^2 \frac{\phi(\lm \tau)}{\tau^2} \le x^2 \frac{\phi(\lm y)}{y^2}, \quad x \le \tau.
    \]
    Now, if $0 < y < \tau$, then, due to the convexity of $\upsilon(x)$ (Lemma~\ref{lemma:convexity_xphi}) and the concavity of $x^\alpha$,
    \[
    \upsilon(\lm x) - \alpha x^\alpha \le 0, \quad y \le x \le \tau .
    \]
    Thus, $\rho_{\lm, \alpha}$ is non-increasing on $[y, \tau]$, and the second claim follows the same way as the first one.
\end{proof}
 
The next lemma ensures a bound on a matrix moment-generating function.
 
\begin{lemma}\label{lemma:mgf_X_bound}
    Let $\X \in \H(d)$ be a random matrix such that
    \[
    \E \X \preccurlyeq 0, \quad
    \E \X^2 = \Sg, \quad 
    \norm*{\lmax(\X)_+}_{\pa} = U < + \infty,
    \]
    for some $\alpha > 0$.
    
    Fix $\lm > 0$. Let $y > 0$ satisfy 
    \begin{equation}\label{eq:condition_lambda}
        \upsilon(\lm y) \le \alpha \left(\frac{y}{U}\right)^\alpha.
    \end{equation}
    If $\alpha < 1$, let $\lmax(\X) \le \tau ~~\text{a.s.}$ for some $\tau > 0$ satisfying~\eqref{eq:condition_lambda}, i.e. $\upsilon(\lm \tau) \le \alpha \left(\frac{\tau}{U}\right)^\alpha.$
    Then
    \[
    \E \elr{\lm \X} \preccurlyeq \Ii + \frac{\phi(\lm y)}{y^2} \Sg + 2 \left(\phi(\lm y) - \frac{\lm^2 y^2}{2}\right) \elr{- \left(\frac{y}{U}\right)^\alpha} \Ii.
    \]
\end{lemma}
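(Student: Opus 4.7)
The plan is to reduce the claim to a pointwise scalar inequality for $\phi(\lambda x)$, obtain that inequality by rescaling Lemma~\ref{lemma:phi_bound}, and then lift it to the matrix setting via the transfer rule together with a single Orlicz estimate.

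I would begin with the functional identity $e^{\lambda x} = 1 + \lambda x + \phi(\lambda x)$, which extends to Hermitian matrices by functional calculus as $e^{\lambda \X} = \Ii + \lambda \X + \phi(\lambda \X)$. Taking expectations and combining $\lambda > 0$ with $\E \X \preccurlyeq \bm{0}$ yields $\E e^{\lambda \X} \preccurlyeq \Ii + \E \phi(\lambda \X)$, so everything boils down to controlling $\E \phi(\lambda \X)$ in the Loewner order.

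Next, I would invoke Lemma~\ref{lemma:phi_bound} with the substitutions $\lm \mapsto \lm U$ and $y \mapsto y/U$ (and, when $\alpha < 1$, $\tau \mapsto \tau/U$). The lemma's hypothesis $\upsilon(\lm U \cdot y/U) \le \alpha (y/U)^\alpha$ is exactly~\eqref{eq:condition_lambda}, and substituting $x/U$ for the dummy variable in the conclusion produces the pointwise bound
\[
\phi(\lm x) \le \frac{\phi(\lm y)}{y^2}\, x^2 + \left(\phi(\lm y) - \frac{(\lm y)^2}{2}\right) e^{-(y/U)^\alpha}\, e^{(x_+/U)^\alpha}\, \ind[x > y],
\]
valid for all $x \in \R$ when $\alpha \ge 1$ and for $x \le \tau$ when $\alpha < 1$. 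In the latter case the a.s.\ bound $\lmax(\X) \le \tau$ guarantees that every eigenvalue of $\X$ lies in the range of validity, so the transfer rule applies to the whole spectrum.

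To finish, I would lift the scalar bound eigenvalue-wise to a matrix inequality by the transfer rule, drop the indicator (a non-negative tail), and use monotonicity of $x \mapsto e^{(x_+/U)^\alpha}$ to dominate each diagonal entry of the residual by the scalar $e^{(\lmax(\X)_+/U)^\alpha}$, giving a tail term bounded by $e^{(\lmax(\X)_+/U)^\alpha}\, \Ii$. Taking expectation, using $\E \X^2 = \Sg$, and invoking the Orlicz-norm estimate $\E e^{(\lmax(\X)_+/U)^\alpha} \le 2$ — which is precisely~\eqref{def:orlicz} at $U = \norm{\lmax(\X)_+}_{\pa}$ — delivers the claim. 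I expect the principal obstacle to be the careful bookkeeping in the rescaling of Lemma~\ref{lemma:phi_bound}, since $U$ enters in three distinct places (inside $\lm U$, inside $y/U$, and inside the Orlicz exponent) and has to land correctly to match both the hypothesis and the target bound; a secondary subtlety, in the regime $0 < \alpha < 1$, is that the assumption $\lmax(\X) \le \tau$ a.s.\ is needed both for applying the scalar bound eigenvalue-wise and for legitimizing the step of dropping the indicator.
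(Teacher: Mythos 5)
Your proposal is correct and follows essentially the same route as the paper: reduce to $\E e^{\lm\X}\preccurlyeq \Ii+\E\phi(\lm\X)$ via $\E\X\preccurlyeq\bm{0}$, apply the rescaled Lemma~\ref{lemma:phi_bound} pointwise, lift it with the transfer rule, and close with $\E e^{(\lmax(\X)_+/U)^\alpha}\le 2$ from the Orlicz-norm definition. The only cosmetic difference is that the paper normalizes to $U=1$ first and substitutes back at the end, whereas you feed the rescaled parameters directly into Lemma~\ref{lemma:phi_bound}; these are the same maneuver.
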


\begin{proof}
    By rescaling, it is enough to consider the case $U = 1$.
    First, we recall that the moment-generating function satisfies
    \begin{equation*}\label{eq:phi_ineq_0} 
        \E \elr{\lm \X} = \Ii + \lm \E \X + \E\lr{e^{\lm \X} - \Ii - \lm \X} 
        \preccurlyeq \Ii + \E \phi(\lm \X).
    \end{equation*}

    Further, we can apply Lemma~\ref{lemma:phi_bound} because its conditions are fulfilled,
    \begin{align*}
        \E \phi(\lm \X) &\preccurlyeq \E \left(\frac{\phi(\lm y)}{y^2} \X^2 + \rho_{\lm, \alpha}(y) \elr{\X_+^\alpha}\right) \\
        &\preccurlyeq \frac{\phi(\lm y)}{y^2} \Sg + \rho_{\lm, \alpha}(y) \E \elr{\lmax(
        \X)_+^\alpha} \Ii \\
        &\preccurlyeq \frac{\phi(\lm y)}{y^2} \Sg + 2 \rho_{\lm, \alpha}(y) \Ii
    \end{align*}

    By replacing $y \rightarrow \frac{y}{U}$, $\Sg \rightarrow \frac{1}{U^2} \Sg$ and $\lm \rightarrow \lm U$, we get the result.
\end{proof}

\begin{lemma}
\label{lemma:bound_on_tail}
    Let $X_1, \dots, X_n$ be non-negative random variables adapted to a filtration $(\F_i)^n_{i=1}$, $\alpha > 0$, and $U_i \eqset \norm*{X_i \bigm| \F_{i-1}}_{\pa} \in [0, \infty]$.
    Fix $U \ge K > 0$ and set
    \[
    E_n \eqset \lrc{\sum^n_{i=1} U^2_i \le U^2,~\max_{i\in [n]} U_i \le K }.
    \]
    Then for any $\tau \ge K$
    \begin{equation}
    \label{eq:bound_on_p_tail}
        \P\left( \{\max_{i \in [n]} X_i \ge \tau\} \cap E_n \right) \le 2 \lr{\frac{4}{\alpha e}}^{\frac{2}{\alpha}}
        \frac{U^2}{\tau^2} e^{-\frac{1}{2} \lr{\frac{\tau}{K}}^{\alpha} }.
    \end{equation}
    Moreover, if 
    \begin{equation*}
    \label{def:bound_on_tau}
        \tau \ge K \left(2 \xx + 4 \ln\lr{\frac{2 U}{K}} + \frac{4}{\alpha} \ln \lr{\frac{4}{\alpha e}} \right)^{1/\alpha} ,
    \end{equation*}
    then
    \begin{equation}
    \label{eq:bound_tail_ex}
        \P\lr{\{\max_{i\in [n]} X_i \ge \tau\} \cap E_n} \le e^{-\xx}.
    \end{equation}
\end{lemma}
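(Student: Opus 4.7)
The plan is to bound the probability on the left of \eqref{eq:bound_on_p_tail} by a union bound over $i$, apply a conditional Markov inequality driven by the definition of the conditional Orlicz norm $U_i$, and then couple the resulting exponential decay with the pre-factor $U_i^2$ through a scalar optimization. The delicate point is that $E_n$ is only $\F_{n-1}$-measurable (it constrains the future $U_{i+1},\dots,U_n$), so one cannot simply condition on $\F_{i-1}$ inside $\ind[E_n]$. To localize, I will introduce the stopping time
\[
\nu \eqset \min\lrc{i \in [n] : \sum_{j \le i} U_j^2 > U^2 \text{ or } U_i > K},
\]
with $\min \varnothing \eqset n+1$. Since each $U_j$ is $\F_{j-1}$-measurable, $\{\nu > i\} \in \F_{i-1}$ and $E_n = \{\nu > n\} \subset \{\nu > i\}$ for all $i \in [n]$. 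Hence
\[
\ind\lrs{\max_i X_i \ge \tau}\ind[E_n] \le \sum_{i=1}^n \ind[X_i \ge \tau]\ind[\nu > i].
\]
By the definition of the conditional Orlicz norm, $\E\lrs{e^{(X_i/U_i)^\alpha} \mid \F_{i-1}} \le 2$ on $\{U_i > 0\}$ (and $X_i = 0$ a.s.\ on $\{U_i=0\}$), so conditional Markov yields $\P(X_i \ge \tau \mid \F_{i-1}) \le 2 e^{-(\tau/U_i)^\alpha}$, and therefore
\[
\P\lr{\lrc{\max_i X_i \ge \tau} \cap E_n} \le 2 \sum_{i=1}^n \E\lrs{\ind[\nu > i]\, e^{-(\tau/U_i)^\alpha}}.
\]

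The central deterministic step is the scalar inequality
\[
e^{-(\tau/u)^\alpha} \le \lr{\frac{4}{\alpha e}}^{2/\alpha} \lr{\frac{u}{\tau}}^2 e^{-\frac{1}{2}(\tau/K)^\alpha} \qquad (\star)
\]
for all $0 < u \le K \le \tau$. To prove $(\star)$ I set $s \eqset \tau/u \ge \tau/K \eqset r \ge 1$; the claim reduces to $s^2 e^{-s^\alpha + r^\alpha/2} \le (4/(\alpha e))^{2/\alpha}$ on $\{s \ge r,\ r \ge 1\}$. The function $g(s) = s^2 e^{-s^\alpha}$ is unimodal with maximizer $s_\star = (2/\alpha)^{1/\alpha}$: when $r \ge s_\star$, $g$ is decreasing on $[r, \infty)$, so $\sup_s s^2 e^{-s^\alpha + r^\alpha/2} = r^2 e^{-r^\alpha/2}$, and the subsequent maximization over $r$ (attained at $r = (4/\alpha)^{1/\alpha}$) gives exactly $(4/(\alpha e))^{2/\alpha}$; when $r < s_\star$, a direct computation shows the supremum is strictly smaller. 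This identifies the optimal constant and is where the real work lies.

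Substituting $(\star)$ yields
\[
\P\lr{\lrc{\max_i X_i \ge \tau} \cap E_n} \le \frac{2}{\tau^2}\lr{\frac{4}{\alpha e}}^{2/\alpha} e^{-(\tau/K)^\alpha/2}\sum_{i=1}^n \E\lrs{\ind[\nu > i]\, U_i^2},
\]
and it remains to invoke the pathwise bound $\sum_{i=1}^n \ind[\nu > i]\, U_i^2 \le U^2$: if $\nu > n$ the sum equals $\sum_{i=1}^n U_i^2 \le U^2$ on $E_n$, while if $\nu \le n$ it equals $\sum_{i < \nu} U_i^2 \le U^2$ by the very definition of $\nu$. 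This delivers \eqref{eq:bound_on_p_tail}. For \eqref{eq:bound_tail_ex} I will take logarithms in \eqref{eq:bound_on_p_tail}, use $\tau \ge K$ to replace $\ln(U/\tau)$ by $\ln(U/K)$, absorb $2\ln 2$ into $4\ln(2U/K)$, and then apply the hypothesis on $\tau^\alpha$ to conclude. The main obstacle is the sharp scalar inequality $(\star)$ with the precise constant $(4/(\alpha e))^{2/\alpha}$; the stopping-time bookkeeping, though essential, is routine once noticed.
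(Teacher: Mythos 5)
Your proposal is correct and follows essentially the same route as the paper: your stopping time $\nu$ satisfies $\{\nu > i\} = E_i$, the paper's nested $\F_{i-1}$-measurable events, your conditional Markov step plus the scalar inequality $(\star)$ is exactly the paper's indicator bound $\ind[s \ge t] \le \lr{\frac{4}{\alpha e}}^{2/\alpha} t^{-2} e^{-t^\alpha/2} e^{s^\alpha}$ combined with $U_i \le K$ on $E_i$, and your pathwise bound $\sum_i \ind[\nu > i] U_i^2 \le U^2$ is the paper's Abel-summation estimate $\sum_i \E U_i^2 \ind[E_i] \le U^2$ in disguise. All steps, including the optimization yielding the constant $\lr{\frac{4}{\alpha e}}^{2/\alpha}$ and the log-bookkeeping for the second claim, check out.
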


\begin{proof}
    First, we derive the key ingredient of the lemma, a bound on the indicator function $\ind[s \ge t]$ for any $t > 0$ and $s\ge 0$.
    
    Lemma~\ref{lemma:useful_bound} ensures $e^{t^{\alpha}} \ge \lr{\frac{\alpha e}{4}}^{\frac{4}{\alpha}}t^4$. Thus, for any $s \ge t > 0$
    \[
    e^{s^{\alpha}} \ge e^{t^{\alpha}} \ge \lr{\frac{\alpha e}{4}}^{\frac{2}{\alpha}}t^2e^{\frac{t^{\alpha}}{2}} ~~\Rightarrow~~ \lr{\frac{4}{\alpha e }}^{\frac{2}{\alpha}} \frac{1}{t^2} e^{-\frac{t^\alpha}{2}} e^{s^{\alpha}} \ge 1.
    \]

    This entails for any $t > 0$ and $s\ge 0$,
    \begin{equation}
    \label{eq:bound_on_ind1}
         \ind[s \ge t] \le \lr{\frac{4}{\alpha e }}^{\frac{2}{\alpha}} \frac{1}{t^2} e^{-\frac{t^\alpha}{2}} e^{s^{\alpha}}.   
    \end{equation}

    Now define auxiliary events
    \begin{gather}
    \label{def:tail_events}
        E_{k} \eqset \lrc{\sum^k_{i=1} U^2_i \le U^2,~\max_{i\in [k]} U_i \le K } \in \F_{k-1}, \quad k \in [n].
    \end{gather}
    Note that $ E_1 \supset \dots \supset E_n \supset E_{n+1} \eqset \varnothing$.

    The union bound ensures
    \[
    \P\lr{\{\max_i X_i \ge \tau\} \cap E_n} \le \sum_k \P\lr{\{ X_i \ge \tau\} \cap E_n} 
    \le \sum_i \P\lr{\{ X_i \ge \tau\} \cap E_i}.
    \]

    Now, we are to bound $\P\lr{\{ X_i \ge \tau\} \cap E_i}$. 
    In the following bound w.l.o.g., we consider all $U_i > 0$ a.s. 
    Otherwise, one could consider instead of $U_i$ the upper bound $U_i + \eps$ and let $\eps \to 0$.
    Notice that
    \begin{align}
    \label{eq:bound_tail_aux}
        &\P \lr{\{ X_i \ge \tau\} \cap E_i} = \E \ind \lrs{ X_i \ge \tau }\cdot \ind\lrs{E_i}\nonumber \\
        & \overset{{\text{by }\eqref{eq:bound_on_ind1} }}{\le} 
        \E \lr{\frac{4}{\alpha e }}^{\frac{2}{\alpha}} \lr{\frac{U_i}{\tau}}^2 e^{-\frac{1}{2}\lr{\frac{\tau}{U_i}}^{\alpha} } e^{\lr{\frac{X_i}{U_i}}^{\alpha}} \ind\lrs{E_i}\nonumber\\
        & = \E \E\lrs{ \lr{\frac{4}{\alpha e }}^{\frac{2}{\alpha}} \lr{\frac{U_i}{\tau}}^2 e^{-\frac{1}{2}\lr{\frac{\tau}{U_i}}^{\alpha} } e^{\lr{\frac{X_i}{U_i}}^{\alpha}} \ind\lrs{E_i} \middle| \F_{i-1} } \nonumber \\
        & = \E\lr{\lr{\frac{4}{\alpha e }}^{ \frac{2}{\alpha} } \lr{\frac{U_i}{\tau}}^2 e^{-\frac{1}{2}\lr{\frac{\tau}{U_i}}^{\alpha} } \ind\lrs{E_i} \cdot \E\lrs{e^{\lr{\frac{X_i}{U_i}}^{\alpha}} \middle | \F_{i-1}} } \nonumber\\
        &\le 
        \lr{\frac{4}{\alpha e}}^{ \frac{2}{\alpha} }
        \frac{2}{\tau^2} e^{-\frac{1}{2}\lr{\frac{\tau}{K}}^{\alpha} } \E U^2_i \ind\lrs{E_i}.
    \end{align}
    The last inequality holds because $U_i \le K$ on $E_i$ and $\E\lrs{\elr{\lr{\frac{X_i}{U_i}}^{\alpha}} \bigg| \F_{i-1}} \le 2$.

    Now, we consider
    \begin{align}
    \label{eq:bound_sum_uk}
        \sum_k \E U^2_k \ind\lrs{E_k} &= \E \sum^n_{k=1} U^2_k \sum^n_{i=k}\lr{\ind\lrs{E_i} - \ind\lrs{E_{i+1}}} \nonumber \\
        &=\E \sum^n_{i=1}\lrs{\lr{\ind\lrs{E_i} - \ind\lrs{E_{i+1}}}\sum^i_{k=1} U^2_k } \nonumber \\
        & \overset{(a)}{\le} U^2\E\sum^n_{i=1}\lr{\ind\lrs{E_i} - \ind\lrs{E_{i+1}} } = U^2\E\ind\lrs{E_1} \le U^2,
    \end{align}
    where (a) holds because on any event $E_k$ it holds $\sum^k_{i=1}U^2_i \le U^2$.
    Combining \eqref{eq:bound_tail_aux} and \eqref{eq:bound_sum_uk}, we get the first result \eqref{eq:bound_on_p_tail}. 

    To get \eqref{eq:bound_tail_ex}, one has to find $\tau$ s.t.
    \[
    -\xx \ge \frac{2}{\alpha}\ln \lr{\frac{4}{\alpha e}} + \ln 2 + 2 \ln\lr{\frac{U}{\tau}} -
    \frac{1}{2}\lr{\frac{\tau}{K}}^{\alpha}.
    \]
    Thus,
    \[
    \lr{\frac{\tau}{K}}^{\alpha} - 4\ln\lr{\frac{U}{\tau}} \ge 2 \xx + \ln 4 + \frac{4}{\alpha} \ln \lr{\frac{4}{\alpha e}} 
    \]
    This is equivalent to
    \[
    \lr{\frac{\tau}{K}}^{\alpha} + 4\ln\lr{\frac{\tau}{K}} - 4 \ln\lr{\frac{U}{K}} \ge 2 \xx + \ln 4 + \frac{4}{\alpha}\ln \lr{\frac{4}{\alpha e}}.
    \]
    As $\ln 4 > 1$, $U \ge K$, and $\tau \ge K$, the second claim follows.
\end{proof}

Now, we are ready to prove the main result.
 
\begin{proof}[Proof of Theorem~\ref{thm:martingale}]
    Define
    \begin{equation}
    \label{def:ok_tau}
        \tau \eqset 
        \begin{cases}
            K \max\lrc{z,\; \left(2 \xx + 4 \ln\lr{\frac{2 U}{K}} + \frac{4}{\alpha}\ln \lr{\frac{4}{\alpha e}} \right)^{1/\alpha} }, & \alpha < 1,\\
            K z, & \alpha \ge 1.
        \end{cases}
    \end{equation}

    Set $Y_i \eqset U_i z \le K z$ on $E$. Let
    \begin{equation}
    \label{def:lm_0}
        \lm_0 \eqset \frac{2 \alpha }{3 K} \lr{\frac{\tau}{K}}^{\alpha - 1} \le \frac{2 \alpha}{3 K} z^{\alpha - 1},
    \end{equation}
    and consider $0 \le \lm \le \lm_0$. 
    Then by Lemma~\ref{lemma:convexity_xphi}
    \[
    \upsilon(\lm Y_i) \le \upsilon(\lm K z) < \min\{4, 1.5 \lm K z\} \le \alpha z^\alpha = \alpha \left(\frac{Y_i}{U_i}\right)^\alpha ~~\text{on}~~E.
    \]

    Now, we set
    \[
    \tilde{\X}_i \eqset 
    \begin{cases}
        \X_i, &~~\text{if}~~ \alpha \ge 1,\\
        \X_i \ind_{(-\infty, \tau]}(\X_i), & \text{otherwise} .
    \end{cases}
    \]
    By construction, for all $i \in [n]$ one has $\tilde{\X}_i \preccurlyeq \X_i$ and $\tilde{\X}_i^2 \preccurlyeq \X_i^2$, thus
    \[
    \E \tilde{\X}^2_i \preccurlyeq \E \X^2_i, 
    \quad
    \norm*{\lmax(\tilde{\X}_i)_{+}}_{\pa} \le U_i.
    \]
    Moreover, if $\alpha < 1$, then $\lmax(\tilde{\X}_i)_{+} \le \tau$ by construction.
    
    Denote 
    \[
    \tilde{\Sm}_k = \sum^k_{i=1} \tilde{\X}_i.
    \]

    By Lemma~\ref{lemma:mgf_X_bound}, using the monotonicity of $\frac{\phi(t)}{t^2}$ and the fact that $\ln(\X)$ is a monotone map on the cone of positive-definite matrices (see (2.8) in \cite{tropp2012user}), we obtain
    \begin{align*}
        \V_i(\lm) &\eqset \ln \E \left[\elr{\lm \tilde{\X}_i} \middle| \F_{i-1}\right] \\
        &\preccurlyeq \frac{\phi(\lm Y_i)}{Y_i^2} \Sg_i + 2 \left(\phi(\lm Y_i) - \frac{\lm^2 Y_i^2}{2}\right) \elr{- \left(\frac{Y_i}{U_i}\right)^\alpha} \Ii \\
        &= \frac{\phi(\lm Y_i)}{Y_i^2} \Sg_i + 2 U_i^2 z^2 \left(\frac{\phi(\lm Y_i)}{Y_i^2} - \frac{\lm^2}{2}\right) \elr{- z^\alpha} \Ii \\
        &\preccurlyeq  \frac{\phi(\lm K z)}{(K z)^2} \Sg_i + 2 \frac{U_i^2}{K^2} \left(\phi(\lm K z) - \frac{(\lm K z)^2}{2}\right) \elr{- z^\alpha} \Ii ~~\text{on}~~E
    \end{align*}
    The last inequality is due to $Y_i = U_i z \le K z$ on $E$.

    Lemma~\ref{lemma:bound_on_e_z} ensures for all $\elr{z^\alpha} \ge \frac{e^4}{16} \left(\frac{U z}{\sg}\right)^2$.
    Therefore, for all $k \in [n]$
    \begin{align}
    \label{eq:sum_Vi}
        \sum_{i=1}^k \V_i(\lm) &\preccurlyeq \Sg \frac{\phi(\lm K z)}{(K z)^2} + 2 \frac{U^2}{K^2} \left(\phi(\lm K z) - \frac{(\lm K z)^2}{2}\right) \elr{- z^\alpha} \Ii \nonumber \\
        &\preccurlyeq \left(\frac{\sg}{K z}\right)^2 \left(\phi(\lm K z) + \frac{2}{3} \left(\phi(\lm K z) - \frac{(\lm K z)^2}{2}\right)\right) \Ii ~~\text{on}~~E.
    \end{align}
    Note that for any $a > 0$
    \[
    \phi(t) + a \left(\phi(t) - \frac{t^2}{2}\right) = \frac{t^2}{2} + (1 + a) \sum_{k=3}^\infty \frac{t^k}{k!} \le \frac{1}{(1 + a)^2} \sum_{k=2}^\infty \frac{((1 + a) t)^k}{k!} = \frac{\phi((1 + a) t)}{(1 + a)^2},
    \]
    thus for all $k \in [n]$
    \begin{equation}
    \label{eq:bound_aux_Vi}
        \sum_{i=1}^k \V_i(\lm) \preccurlyeq \left(\frac{\sg}{M}\right)^2 \phi(\lm M) \Ii ~~\text{on}~~E,~~\text{where}~~ M \eqset \frac{5}{3} K z .
    \end{equation}
    Proposition~\ref{prop:mart_bound} with $\Y_i = \lm \tilde{\X}_i$ and $\A = \left(\lm t - \left(\frac{\sg}{M}\right)^2 \phi(\lm M)\right) \Ii$
    yields that
    \begin{align*}
        \P\lrc{\max_{k \in [n]} \lmax\left(\lm \tilde{\Sm}_k - \sum_{i=1}^k \V_i(\lm) - \A\right) \ge 0} &\le \tr \elr{-\A} \\
        &= d \elr{\left(\frac{\sg}{M}\right)^2 \phi(\lm M) - \lm t} .
    \end{align*}
    Note that 
    \[
    \lmax(\lm \tilde{\Sm}_k) \le \lmax\left(\lm  \tilde{\Sm}_k - \sum_{i=1}^k \V_i(\lm) - \A\right) + \lmax\lr{\sum_{i=1}^k \V_i(\lm) + \A}.
    \]
    Moreover, \eqref{eq:bound_aux_Vi} yields on the event $E$ 
    \[
    \max_{k \in [n]} \lmax\left(\sum_{i=1}^k \V_i(\lm) + \A\right) \le \lm t,
    \]
    thus
    \begin{equation}
    \label{eq:bound_prob_Ep}
        \P\lr{\lrc{\max_{k \in [n]} \lmax(\tilde{\Sm}_k) \ge t } \cap E}
        \le d \elr{\left(\frac{\sg}{M}\right)^2 \phi\left(\lm M\right) - \lm t} .
    \end{equation}

    If $\alpha \ge 1$, $\tilde{\X}_i = \X_i$, and we immediately get $\tilde{\Sm}_k = \Sm_k$. Thus,
    \[
    \P\lrc{\max_{k \in [n]} \lmax(\Sm_k) \ge t } 
    \le 1 - \P\{E\} + d \elr{\left(\frac{\sg}{M}\right)^2 \phi\left(\lm M\right) - \lm t}.
    \]

    Consider $0 < \alpha < 1$. To get the bound on $\Sm_k$, one has to estimate the probability that $\tilde{\X}_i = \X_i$ for all $i$ and the event $E$ holds. 
    Note that, by construction, $\tilde{\X}_i \neq \X_i$ iff $\lmax(\X_i) > \tau$.
    Recall that $E \subset E_n$ with $E_n$ coming from \eqref{def:tail_events}. Thus, due to the choice of $\tau$ \eqref{def:ok_tau}, by Lemma~\ref{lemma:bound_on_tail}
    \begin{align*}
        \P\lr{\{\exists k \in [n]: \tilde{\Sm}_k \neq \Sm_k\} \cap E}
        &= \P\lr{\{\exists i \in [n]: \tilde{\X}_i \neq \X_i\} \cap E} \\
        &= \P\left(\{\exists i \in [n]: \lmax(\X_i) > \tau\} \cap E \right) \\
        &\le \P\left( \{\max_{i \in [n]} \lmax(\X_i)_+ > \tau\} \cap E_n \right) \le e^{-\xx} .
    \end{align*}

    Combining this bound with \eqref{eq:bound_prob_Ep}, we get for $\alpha < 1$
    \[
    \P\lrc{\max_{k \in [n]} \lmax(\Sm_k) \ge t }
    \le 1 - \P\{E\} + e^{-\xx} + d \elr{\left(\frac{\sg}{M}\right)^2 \phi\left(\lm M\right) - \lm t}
    \]

    \paragraph*{Optimization over $\lm$.}
    We have to minimize $\elr{\left(\frac{\sg}{M}\right)^2 \phi\left(\lm M\right) - \lm t}$ w.r.t. $\lm$. 
    Let $\xi_0 \eqset \lm_0 M$, then
    \[
    \min_{0 \le \lm \le \lm_0} \left(\frac{\sg}{M}\right)^2 \phi\left(\lm M\right) - \lm t 
    = \left(\frac{\sg}{M}\right)^2 \min_{0 \le \xi \le \xi_0} \phi(\xi) - \left(\frac{M}{\sg}\right)^2 \frac{\xi}{M} t 
    = - \left(\frac{\sg}{M}\right)^2 g_{\xi_0}\left(\frac{M t}{\sg^2}\right),
    \]
    with $g_{\xi_0}$ coming from Lemma \ref{lemma:bound_on_g_inv}.
    
    According to the previous bounds, it is enough to find $t = t(\xx)$, s.t.
    \[
    \left(\frac{\sg}{M}\right)^2 g_{\xi_0}\left(\frac{M t(\xx)}{\sg^2}\right) \ge \xx,
    \]
    that is equivalent to
    \[
    t(\xx) \ge \frac{\sg^2}{M} g_{\xi_0}^{-1}\left(\frac{M^2 \xx}{\sg^2}\right) .
    \]

    In view of Lemma \ref{lemma:bound_on_g_inv}, we choose
    \[
    t(\xx) \eqset 
    \begin{cases}
        \frac{\sg^2}{M} h^{-1}\left(\frac{M^2 \xx}{\sg^2}\right), &~\text{if}~ \xx \le \xx_0,\\
         \frac{2}{\lm_0} \xx, &~\text{if}~ \xx > \xx_0,
    \end{cases}
    \]
    with $\xx_0 \eqset \xi_0 \phi'(\xi_0) - \phi(\xi_0)$ (recall that $\xi_0 \eqset \lm_0 M$).
    
    Substituting $\lm_0$ defined in \eqref{def:lm_0}, we get for $\xx > \xx_0$
    \begin{equation}
    \label{eq:b1.1}
        t(\xx) = \frac{3 K}{\alpha} \lr{\frac{\tau}{K}}^{1-\alpha} \xx 
        \le \frac{3 K z}{\az}\xx + \frac{3 K}{\alpha}\xx\left(2 \xx + 4 \ln\lr{\frac{2 U}{K}}
        + \frac{4}{\alpha}\ln \lr{\frac{4}{\alpha e}} \right)^{\frac{1-\alpha}{\alpha}} \ind[\alpha < 1]. 
    \end{equation}

    If $\xx\le \xx_0$, then the bound \eqref{eq:bound_h_inv} on $h^{-1}(\cdot)$ yields
    \begin{align*}
    \label{eq:delta_2_bound}
        t(\xx) & \le \frac{\sg^2}{M}\lr{ \sqrt{2 \lr{\frac{M}{\sg}}^2 \xx} +  \frac{2\lr{\frac{M}{\sg}}^2\xx}{\logg \lr{2\lr{\frac{M}{\sg}}^2 \xx} } } = \sigma  \sqrt{2\xx} + \frac{2M\xx}{\logg \lr{2\lr{\frac{M}{\sg}}^2 \xx} } \nonumber \\
        & = \sg\sqrt{2 \xx} + \frac{\frac{10}{3} K z \xx }{ \logg \lr{2\lr{\frac{5 Kz}{3 \sg}}^2 \xx} }
        \le \sg\sqrt{2\xx} + \frac{4 K z \xx }{ \logg \lr{\lr{\frac{Kz}{\sg}}^2 \xx} },
    \end{align*}
    where we used that $M \eqset 5/3 Kz$.
    We get \eqref{res:ben} by combining this bound with~\eqref{eq:b1.1}.
    
    Finally, let us prove \eqref{res:ber}. 
    First, we consider the case $\xx \le \xx_0$ and apply the well-known bound on $h^{-1}(\cdot)$,
    \[
    h^{-1}(u) \le \sqrt{2u} + \frac{u}{3}.
    \]
    This yields for $\xx \le \xx_0$
    \[
    t(\xx) \le \sg \sqrt{2 \xx} + \frac{M}{3} \xx = \sg \sqrt{2 \xx} + \frac{5 K z}{9} \xx.
    \]
    
    Combining this bound with~\eqref{eq:b1.1} for the case $\xx > \xx_0$ and using the inequality $\az \ge 4$, we get~\eqref{res:ber}.
\end{proof}

\subsection{Proofs of Theorem~\ref{thm:dim_free_bounded} and~\ref{thm:dim_free_unbounded}}
\label{sec:proofs_dim_free}

To get the bounds incorporating intrinsic dimension, one has to prove a modified version of Proposition~\ref{prop:mart_bound}.

\begin{lemma}
    \label{lemma:mart_bound_dim_free}
    Let $\Y_i$, $\V_i$ and $\Z_k$ be as in Proposition~\ref{prop:mart_bound}, i.e., $(\Y_i)_{i=1}^n \subset \H(d)$ is a matrix-valued stochastic process adapted to filtration $(\F_i)_{i=1}^n$, 
    \begin{gather*}
        \V_i \eqset \ln \E\left[e^{\Y_i} \middle| \F_{i-1}\right] \in \H(d),
        \quad 
        \tilde{\V}_i \eqset \V_i - \E[\Y_i | \F_{i-1}],
        \quad i \in [n],\\
        \Z_k \eqset \sum_{i=1}^k (\Y_i - \V_i), \quad k = 0, \dots, n.
    \end{gather*}
    Consider nested events $E_1 \supset \dots \supset E_n$, such that $E_k \in \F_{k-1}$. 
    Then for any $\A \in \H(d)$ and $\eps > 0$, it holds
    \begin{align*}
        \P\lr{E_n \bigcap \left\{\max_{k\in [n]}\lmax(\Z_k + \A) \ge \eps\right\}} &\le 
        \P\lr{\bigcup^n_{k=1} E_k \bigcap \left\{\lmax(\Z_k + \A) \ge \eps\right\} } \\
        &\le \frac{\tr\Bigl(\phi(\A) + \sum_{k=1}^n \E \ind\lrs{E_k}\cdot(\tilde{\V}_k)_{+}\Bigr)}{\phi(\eps)}.
    \end{align*}
\end{lemma}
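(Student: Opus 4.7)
The plan is to repeat the Lieb-based supermartingale argument of Proposition~\ref{prop:mart_bound}, but replace $\tr\exp(\cdot)$ by $\tr\phi(\cdot)$: the identity $\tr\phi(X)=\tr e^X-d-\tr X$, together with $\phi\ge 0$ and monotonicity of $\phi$ on $[0,\infty)$, yields $\tr\phi(X)\ge\phi(\lmax(X))$, so the Markov step will produce the intrinsic factor $\tr\phi(\A)$ in the bound instead of $d\tr e^{-\A}$. The first inequality in the statement is immediate from the nesting of the events: since $E_n\subset E_k$ for all $k\le n$,
\[
E_n\cap\lrc{\max_{k\in[n]}\lmax(\Z_k+\A)\ge\eps}\subset\bigcup_{k=1}^n\lr{E_k\cap\lrc{\lmax(\Z_k+\A)\ge\eps}}.
\]

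\textbf{Approximate supermartingale.} I would first note that $\V_k$ and $\E[\Y_k\mid\F_{k-1}]$ are $\F_{k-1}$-measurable, hence so is $\tilde{\V}_k$. Lieb's theorem, applied as in Proposition~\ref{prop:mart_bound}, gives $\E[\tr e^{\Z_k+\A}\mid\F_{k-1}]\le \tr e^{\Z_{k-1}+\A}$, while linearity gives $\E[\tr(\Z_k+\A)\mid\F_{k-1}]=\tr(\Z_{k-1}+\A)-\tr\tilde{\V}_k$. Subtracting, $\E[\tr\phi(\Z_k+\A)\mid\F_{k-1}]\le \tr\phi(\Z_{k-1}+\A)+\tr\tilde{\V}_k$. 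Multiplying by the $\F_{k-1}$-measurable indicator $\ind[E_k]\le\ind[E_{k-1}]$, using non-negativity of $\tr\phi$ and $\tr\tilde{\V}_k\le\tr(\tilde{\V}_k)_+$, one arrives at
\[
\E\lrs{\ind[E_k]\tr\phi(\Z_k+\A)\bigm|\F_{k-1}}\le \ind[E_{k-1}]\tr\phi(\Z_{k-1}+\A)+\ind[E_k]\tr(\tilde{\V}_k)_+.
\]

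\textbf{Stopping and conclusion.} Setting $S_0\eqset\tr\phi(\A)$ and $S_k\eqset\ind[E_k]\tr\phi(\Z_k+\A)-\sum_{i=1}^k\ind[E_i]\tr(\tilde{\V}_i)_+$, the previous bound yields $\E[S_k\mid\F_{k-1}]\le S_{k-1}$. I would then introduce the stopping time $\tau\eqset\min\{k\in[n]:E_k\text{ and }\lmax(\Z_k+\A)\ge\eps\}$ with $\min\varnothing=\infty$; it is $(\F_k)$-adapted because $\Z_j\in\F_j$ and $E_j\in\F_{j-1}$. Optional stopping at $\tau\wedge n$ gives $\E S_{\tau\wedge n}\le\tr\phi(\A)$. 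On $\{\tau\le n\}$, $\ind[E_\tau]=1$ and $\tr\phi(\Z_\tau+\A)\ge\phi(\lmax(\Z_\tau+\A))\ge\phi(\eps)$; on $\{\tau>n\}$ the term $\ind[E_n]\tr\phi(\Z_n+\A)$ is non-negative. Therefore $S_{\tau\wedge n}\ge\phi(\eps)\ind[\tau\le n]-\sum_{i=1}^n\ind[E_i]\tr(\tilde{\V}_i)_+$ almost surely; taking expectations and rearranging gives the claimed bound, since $\{\tau\le n\}=\bigcup_{k=1}^n(E_k\cap\{\lmax(\Z_k+\A)\ge\eps\})$.

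\textbf{Main obstacle.} The delicate point will be arranging the compensator to involve the weaker matrix $(\tilde{\V}_k)_+$ rather than the possibly sign-indefinite $\tilde{\V}_k$; this is achieved by multiplying the Lieb-plus-linearity inequality by $\ind[E_k]$ before invoking $\tr\tilde{\V}_k\le\tr(\tilde{\V}_k)_+$, and the nesting $E_k\subset E_{k-1}$ is precisely what makes the telescoping yield a genuine supermartingale that can be optionally stopped.
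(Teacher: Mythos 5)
Your proof is correct and follows essentially the same route as the paper: the Lieb-based inequality for $\tr e^{\Z_k+\A}$ combined with linearity to get the $\tr\phi$ recursion, multiplication by the nested indicators before passing to $(\tilde{\V}_k)_+$, and a supermartingale argument. The only cosmetic difference is that you compensate by subtracting the past terms and apply optional stopping to the resulting (signed) supermartingale, whereas the paper adds the conditional expectations of the future terms to obtain a non-negative supermartingale and invokes Doob's maximal inequality (Proposition~\ref{prop:mart_markov}); both are valid.
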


\begin{proof}
    First, we notice that Proposition~\ref{prop:mart_bound} ensures $\E \lrs{\tr e^{\Z_k + \A}\big| \F_{k-1}} \le \tr e^{\Z_{k-1} + \A}$. Consequently,
    \begin{align*}
        \E\lrs{\tr \phi\lr{\Z_k + \A} \middle| \F_{k-1}} &= \E \lrs{ \tr\lr{e^{\Z_k + \A} - \Z_k - \A - \bm{I} }\big| \F_{k-1}}\\
        &\le  \tr\lr{e^{\Z_{k-1} + \A} - \Z_{k-1} - \A - \bm{I} } - 
        \E\lrs{\tr\lr{\Y_k - \V_k}\middle| \F_{k-1}}\\
        &= \tr\phi\lr{\Z_{k-1} + \A} + \tr\lr{\V_k - \E\lrs{\Y_k\middle| \F_{k-1}}} = \tr\phi\lr{\Z_{k-1} + \A} +\tr\tilde{\V}_k.
    \end{align*}
    This yields
    \begin{align*}
        \E\lrs{\ind\lrs{E_k} \tr \phi\lr{\Z_k + \A} \middle| \F_{k-1} } = \ind\lrs{E_k} 
        \tr\lr{\phi\lr{\Z_{k-1} + \A} + \tilde{\V}_k } \le \ind\lrs{E_{k-1}}\tr\phi\lr{\Z_{k-1} + \A} + \ind\lrs{E_k} \tr(\tilde{V}_k)_{+}.
    \end{align*}
    The last inequality holds because $\ind\lrs{E_k} \le \ind\lrs{E_{k-1}}$ and $\phi \ge 0$. Here we assume $E_0 \eqset \Omega$.
    Define
    \[
    \xi_k \eqset \ind\lrs{E_{k}} \tr \phi\lr{\Z_{k} + \A} + \sum^n_{i=k+1}\tr \E\lrs{\ind\lrs{E_i} (\tilde{V}_i)_{+} \middle| \F_{k} } \ge 0.
    \]
    Further, $\ind\lrs{E_k}\tr(\tilde{\V}_k)_{+} = \E\lrs{\ind\lrs{E_k}\tr(\tilde{\V}_k)_{+}|\F_{k-1}}$ a.s., thus
    \[
    \E\lrs{\xi_k \middle| \F_{k-1} } \le \ind\lrs{E_{k-1}} \tr \phi\lr{\Z_{k-1} + \A} + \ind\lrs{E_k} \tr(\tilde{\V}_k)_{+} + \sum^n_{i=k+1} \tr \E\lrs{ \ind\lrs{E_i} \cdot (\tilde{\V}_i)_{+} \middle| \F_{k-1}} = \xi_{k+1}.
    \]
    By Proposition~\ref{prop:mart_markov}, this entails for any $t > 0$
    \begin{equation}
    \label{eq:bound_on_xi_aux}
        \P\lrc{\max_{k \in [n]} \xi_k \ge t} \le 
        \frac{\xi_0}{t} 
        =
        \frac{\tr \phi(\A) + \tr \lr{\sum^{n}_{i=1} \E \ind\lrs{E_i}\cdot(\tilde{\V}_i)_{+} }}{t}.
    \end{equation}
    Therefore,
    \begin{align*}
        &\P\lr{\bigcup_k \lrc{E_k \bigcap\lrc{\lmax(\Z_k + \A) \ge \eps} } } = \P\lr{\max_{k\in [n]} \ind\lrs{E_k} \lmax(\Z_k + A) \ge \eps}\\
        &\le \P\lrc{\max_{k\in [n]} \ind\lrs{E_k}\tr \phi(\Z_k + \A) \ge \phi(\eps)} \le \P\lrc{\max_{k\in [n]}\xi_k \ge \phi(\eps) }.
    \end{align*}
    We get the result of the lemma by combining this inequality with \eqref{eq:bound_on_xi_aux}.
\end{proof}

\begin{proof}[Proof of Theorem~\ref{thm:dim_free_bounded}]
    Denote
    \[
    E_k \eqset \lrc{\max_{i\in[k]}\norm*{\lmax(\X_i)| \F_{i-1} }_{\infty}\le K, \quad \sum^k_{i=1} \Sg_i \preccurlyeq \Sg} \in \F_{k-1}.
    \]
    Note that $E_n \subset E_{n-1} \subset \dots \subset E_1$ and by Lemma 3.2 in \citep{tropp2011freedman} 
    \[
    \V_k \eqset \ln \E\left[e^{\lm \X_k} \middle| \F_{k-1}\right] \preccurlyeq \Sg_k \frac{\phi(\lm K)}{K^2}~~~~\text{on}~E_k.
    \]
    Thus,
    \[
    \sum^n_{k=1} \ind\lrs{E_k} \cdot (\V_k)_{+} 
    \preccurlyeq
    \sum^n_{k=1} \ind\lrs{E_k} \Sg_k \frac{\phi(\lm K)}{K^2} \preccurlyeq \Sg  \frac{\phi(\lm K)}{K^2}.
    \]
    Set $\A \eqset \frac{\phi(\lm K)}{K^2} \Sg $ and note that on $E_n$ it holds
    \[
    \Z_k + \A = \lm \Sm_k - \sum^k_{i=1}\V_i + \A \succcurlyeq \lm \Sm_k.
    \]
    By Lemma~\ref{lemma:mart_bound_dim_free}
    \begin{align*}
        \P\lr{E_n \bigcap\lrc{\max_{k \in [n]} \lmax(\Sm_k) \ge t}} &\le \P\lr{E_n \bigcap\lrc{\max_{k \in [n]} \lmax(\Z_k + \A) \ge \lm t}} \\
        &\le \frac{\tr\lr{\phi(\A) + \A}}{\phi(\lm t)} = \frac{\tr\lr{e^{\A} - \bm{I}}}{\phi(\lm t)}.
    \end{align*}
    Lemma~7.5.1 by \cite{tropp2012user_nips} ensures
    \begin{align}
        \label{eq:bound_exp_phi_dim_free}
        \tr\lr{e^{\A} - \bm{I}} \le r(\A) \lr{e^{\lmax(\A)} - 1} = r(\Sg) \lr{\elr{\sg^2 \frac{\phi(\lm K)}{K^2} }- 1}.
    \end{align}

    If $\lm t \ge \max\lrc{1, \sg^2 \frac{\phi(\lm K)}{K^2}}$, it holds
    \begin{align*}
       \frac{\elr{\sg^2 \frac{\phi(\lm K)}{K^2}} - 1}{\phi(\lm t)} \overset{(a)}{\le} \frac{e-1}{e-2} \frac{\elr{\sg^2 \frac{\phi(\lm K)}{K^2} }- 1}{e^{\lm t} - 1} \overset{(b)}{\le} \frac{e-1}{e-2} \frac{\elr{\sg^2 \frac{\phi(\lm K)}{K^2} }}{e^{\lm t}} \le e \cdot \elr{\sg^2 \frac{\phi(\lm K)}{K^2} - \lm t};
    \end{align*}
    (a) holds because the function
    \[
    \frac{\phi(x)}{e^{x} - 1} = \frac{e^{x} - 1 - x}{e^{x} - 1} = 1 - \frac{x}{e^{x} -1}
    \]
    is increasing; 
    (b) holds because if $1 < a \le b$, then $\frac{a-1}{b-1} \le \frac{a}{b}$.
    
    Optimization over $\lm$ ensures 
    \[
    \min_{\lm > 0} \lr{\frac{\sg^2}{K^2} \phi(\lm K) - \lm t } = - \frac{\sg^2}{K^2} h\lr{\frac{K t}{\sg^2}} \le 0,
    \]
    where the minimum is attained at $\lm_{opt} = \frac{1}{K} \ln(1 + \frac{t K}{\sg^2})$. In particular, $\lm_{opt} t \ge \sg^2 \frac{\phi(\lm K)}{K^2}$.
    Thus, if $\lm_{opt} t \ge 1$, then the claim follows.
    Otherwise, $e \cdot \elr{\sg^2 \frac{\phi(\lm_{opt} K)}{K^2} - \lm_{opt} t} \ge 1$, and the result follows immediately, since the probability is bounded by $1$.
\end{proof}

Next, we prove the result for the unbounded case.

\begin{proof}[Proof of Theorem \ref{thm:dim_free_unbounded}]
    Fix some $y \ge \ln\lr{\frac{8}{\eps}}^{1/\alpha} K z$ and set
    \begin{gather*}
        \Y_i \eqset \X_i\ind_{(-\infty, y]}(\X_i), 
        \quad
        \bar{\Y}_i \eqset \Y_i - \E\lrs{\Y_i\middle| \F_{i-1}},
        \\
         \Z_i \eqset \X_i \ind_{(y, \infty)}(\X_i),
        \quad
        \bar{\Z}_i \eqset \Z_i - \E\lrs{\Z_i\middle| \F_{i-1}}.
    \end{gather*}

    Essentially, the proof consists of two steps. First, we bound the term $\max_{k\in [n]}\lmax(\sum^k_{i=1} \bar{\Y}_i)$, and then we bound $\max_{k\in [n]}\lmax(\sum^k_{i=1} \bar{\Z}_i)$.

    \paragraph*{A bound on $\max_{k\in [n]}\lmax(\sum^k_{i=1} \bar{\Y}_i)$.} 
    Since $\X_i$ is a martingale difference, it holds $\E\lrs{\X_i|\F_{i-1}} = 0$. This ensures $\bar{\Y}_i = \Y_i + \E\lrs{\Z_i|\F_{i-1}}$. Note that on the event $E$ it holds
    \begin{align}
    \label{eq:bound_on_Z_dim_free}
        \lmax\lr{\E\lrs{\Z_i\middle| \F_{i-1}} } 
        &\le \E\lrs{\norm{\Z_i} \middle |\F_{i-1}} \overset{\text{(a)}}{\le} \E\Bigl[\lmax(\X_i) \cdot\ind\lrs{\lmax(\X_i) > y} \Big |\F_{i-1} \Bigr]\nonumber \\ & \overset{\text{(b)}}{\le} \E \lrs{y e^{-\lr{\frac{y}{U_i}}^{\alpha} } e^{\lr{\frac{\lmax(\X_i)_+}{U_i}}^{\alpha}} \middle|\F_{i-1} } 
         \le 2 y e^{-\lr{\frac{y}{U_i}}^{\alpha} } \overset{\text{(c)}}{\le} 2^{-11}y,
    \end{align} 
    where (a) holds because $\norm{\Z_i} = \lmax(\Z_i) = \lmax(\X_i)\ind\lrs{\lmax(\X_i) > y_i}$; (b) holds by Lemma~\ref{lemma:useful_bound} since $y\ge Kz$, i.e., $\alpha \lr{\frac{y}{U_i}}^{\alpha} \ge \alpha \lr{\frac{y}{K}}^{\alpha} \ge 1$; (c) holds because $z^{\alpha} \ge 4$ and $\lr{\frac{y}{K}}^{\alpha} \ge z^\alpha \ln \frac{8}{\eps} \ge 4 \ln 8$.

    Consequently, we get
    \[
    \norm*{\lmax(\bar{\Y}_i)_{+}\middle| \F_{i-1}}_{\infty} \le y + \lmax\lr{\E\lrs{\Z_i\middle| \F_{i-1}} } \le (1 + 2^{-11})y.
    \]
    Moreover, by construction
    \[
    \E\lrs{\bar{\Y}^2_i \middle|\F_{i-1}} \preccurlyeq \E\lrs{\Y^2_i \middle|\F_{i-1}} \preccurlyeq \Sg_i.
    \]
    Now, define an event
    \begin{equation}\label{eq:aux_bounded_dim_free}
        E_Y \eqset \lrc{\max_{k\in [n]}\lmax\left(\sum^k_{i=1} \bar{\Y}_i \right) 
        \ge \sg\sqrt{2\xx} + 2 \lr{1 + 2^{-11}}y \frac{\xx}{\logg\lr{\lr{\frac{y}{\sg} }^2\xx} }}.
    \end{equation}
    Thus, applying Theorem \ref{thm:dim_free_bounded} we get that $\P(E \cap E_Y) \le e r(\Sg) e^{-\xx}$.

    \paragraph*{A bound on $\max_{k\in [n]}\lmax(\sum^k_{i=1} \bar{\Z}_i)$.} 
    Note that by construction $\Z_i \succcurlyeq 0$. Set
    \[
    \xi_i \eqset \lmax(\Z_i) = \lmax(\X_i) \cdot \ind\lrs{\lmax(\X_i) \ge y} \ge 0,
    \]
    then $\norm{\xi_i | \F_{i-1}}_{\pa} \le \norm{\lmax(\X_i)_{+}| \F_{i-1}}_{\pa} = U_i$.

    Next, we fix $y \eqset \ln\lr{\frac{8}{\eps}}^{1/\alpha} K z + \frac{\eps \sg}{2e\sqrt{\xx}}$ and apply Lemma~\ref{lemma:useful_bound} to \eqref{eq:bound_on_Z_dim_free}. It holds on the event $E$, that 
    \begin{equation}
    \label{eq:boud_boud}
        \E \lrs{\xi_i \middle |\F_{i-1}} \le 2y e^{-\lr{\frac{y}{U_i}}^{\alpha}} \le 2\frac{U^2_i}{y} \lr{\frac{4}{\alpha e}}^{\frac{2}{\alpha}}e^{-\frac{1}{2} \lr{\frac{y}{U_i}}^{\alpha}} \overset{(a)}{\le} \frac{\eps^2}{2 e^2} \frac{U^2_i}{U^2}\cdot\frac{\sg^2}{ y} ,
    \end{equation}
    where (a) holds because 
    \[
    e^{\frac{1}{2} \lr{\frac{y}{U_i}}^{\alpha}} \ge e^{\frac{1}{2} \ln\lr{\frac{8}{\eps}} z^\alpha} \ge e^{\frac{1}{2} z^\alpha + 2 \lr{\ln\lr{\frac{8}{\eps}} - 1}} \ge \frac{64}{e^2 \eps^2} \frac{U^2}{\sg^2} \lr{\frac{e}{\min\lrc{1, \alpha}}}^{\frac{2}{\min\lrc{1, \alpha}}},
    \]
    and  
    \[
    \lr{\frac{4}{\alpha e}}^{\frac{2}{\alpha}} 
    \lr{\frac{\min\{1, \alpha\}}{e}}^{\frac{2}{\min\lrc{1, \alpha}}} =
    \begin{cases}
       \lr{\frac{4}{e^2}}^{\frac{2}{\alpha}} \le \lr{\frac{4}{e^2}}^2, & \alpha \le 1\\
       \lr{\frac{4}{\alpha e}}^{\frac{2}{\alpha}}\frac{1}{e^2} < \lr{\frac{4}{e^2}}^2 , & \alpha > 1.
    \end{cases}
    \]

    Further, \eqref{eq:boud_boud} and the choice of $y$ ensure
    \begin{equation}
    \label{eq:bound_useful_exp_dim_free}  
        \sum^n_{i=1} \E \lrs{\xi_i \middle | \F_{i-1}} \le \frac{\eps^2}{2e^2} \frac{\sg^2}{y} \le \frac{\eps}{e} \sg \sqrt{\xx}.
    \end{equation}
    Similarly, we get
    \[
    \E \lrs{\xi^2_i\middle | \F_{i-1}} = \E \lrs{\lmax^2(\X_i) \ind \lrs{\lmax(\X_i) > y} \middle | \F_{i-1}} \le 2y^2 e^{-\lr{\frac{y}{U_i}}^{\alpha}} \le \frac{\eps^2}{2e^2} \frac{U^2_i}{U^2} \sg^2,
    \]
    thus
    \[
    \sum_{i=1}^n \E \lrs{\xi^2_i\middle | \F_{i-1}} \le \frac{\eps^2}{2e^2} \sg^2 .
    \]
    
    For the moment we set
    \begin{align*}
        T &\eqset \frac{3K}{\alpha}\left(2 \xx + 2\ln\lr{\frac{4 U}{K}}
        + \frac{4}{\alpha}\ln \lr{\frac{4}{\alpha e}} \right)^{\frac{1-\alpha}{\alpha}} \ind[\alpha < 1],
    \end{align*}
    and define the event
    \begin{align}
    \label{eq:yet_another_bounde_dim_free}
        E_\xi \eqset \lrc{  \max_{k \in [n]} \sum^k_{i=1}\lr{\xi_i - \E \lrs{\xi_i \middle | \F_{i-1}}} > \frac{\eps}{e} \sg \sqrt{\xx} + \frac{4 \kz \xx}{\min\lrc{2 \az, \logg\lr{\lr{\frac{Kz}{\sg}}^2 \xx} }} + T }.
    \end{align}
    Applying Theorem~\ref{thm:martingale}, we get $\P(E \cap E_\xi) \le e^{-\x} + \ind[\alpha < 1]e^{-\x}$. 
    
    \paragraph*{Mixed bound.} 
    We note that
    \[
    \lmax\lr{\sum^k_{i=1} \bar{\Z}_i} \le \sum^k_{i=1}\lmax\lr{\bar{\Z}_i} \le 
    \sum^k_{i=1}\lmax(\Z_i) = \sum^k_{i=1} \xi_i. 
    \]
    This yields
    \begin{align*}
        \lmax\lr{\Sm_k} \le \lmax\lr{\sum^k_{i=1} \bar{\Y}_i} + \lmax\lr{\sum^k_{i=1} \bar{\Z}_i} \le \lmax\lr{\sum^k_{i=1} \bar{\Y}_i} + \sum^k_{i=1}\lr{\xi_i - \E \lrs{\xi_i \middle | \F_{i-1}}} + \sum^k_{i=1}\E \lrs{\xi_i\middle | \F_{i-1}}. 
    \end{align*}
    Combining \eqref{eq:aux_bounded_dim_free}, \eqref{eq:bound_useful_exp_dim_free}, and \eqref{eq:yet_another_bounde_dim_free}, we get on the event $E \cap E_Y \cap E_\xi$ 
    \begin{align*}
        \lmax\lr{\Sm_k} & \le \sg \sqrt{2 \xx} + 2 \lr{1 + 2^{-11}} y \frac{\xx}{\logg\lr{\lr{\frac{y}{\sg}}^2\xx}} + \frac{\eps}{e} \sg \sqrt{\xx} + \frac{\eps}{e} \sg \sqrt{\xx} + \frac{4 \kz \xx}{\min\lrc{2 \az, \logg\lr{\lr{\frac{Kz}{\sg}}^2 \xx} }} + T \\
        &\le \sg \sqrt{2 \xx} + \lr{\frac{1 + 2^{-11}}{e} + \frac{2}{e}} \eps \sg \sqrt{\xx} +  \frac{\lr{2(1 + 2^{-11}) \lr{\ln \frac{8}{\eps}}^{1/\alpha} + 4} \kz \xx}{\min\lrc{2 \az, \logg\lr{\lr{\frac{Kz}{\sg}}^2 \xx} }} + T \\
        & \le (1 + \eps)\sg \sqrt{2\xx} + \frac{7 \lr{\ln \frac{8}{\eps}}^{1/\alpha} \kz \xx}{\min\lrc{2 \az, \logg\lr{\lr{\frac{Kz}{\sg}}^2 \xx} }} + T.
    \end{align*}
    Thus, the first inequality \eqref{res:ben_dim_free} is proven.

    \paragraph*{Bernstein-type bound.} 
    Finally, to prove \eqref{res:ber_dim_free}, we use instead Bernstein-type bounds in \eqref{eq:aux_bounded_dim_free} and \eqref{eq:yet_another_bounde_dim_free}. Then
    \begin{align*}
        \lmax\lr{\Sm_k} & \le \sg \sqrt{2 \xx} + \frac{1}{3} \lr{1 + 2^{-11}} y \xx + \frac{\eps}{e} \sg \sqrt{\xx} + \frac{\eps}{e} \sg \sqrt{\xx} + \frac{3}{4} \kz \xx + T \\
        &\le \sg \sqrt{2 \xx} + \lr{\frac{1 + 2^{-11}}{6 e} + \frac{2}{e}} \eps \sg \sqrt{\xx} +  \lr{\frac{1 + 2^{-11}}{3} \lr{\ln \frac{8}{\eps}}^{1/\alpha} + \frac{3}{4}} \kz \xx + T \\
        & \le (1 + \eps)\sg \sqrt{2\xx} + 2 \lr{\ln \frac{8}{\eps}}^{1/\alpha} \kz \xx + T.
    \end{align*}
\end{proof}

\subsection{Proof of Corollary~\ref{cor:empirical}}
\label{sec:proof_emp_ber}

\begin{lemma}
\label{lemma:bound_on_sigma}
    Under assumptions of Corollary~\ref{cor:empirical},
    for any $\xx > 0$ and $n \ge 2\xx$ it holds, with probability at least $1-3de^{-\xx}$, that
    \[
    \sg \le \left(\hat{\sg} + \norm*{\bar{\X} - \E \X_1}\right) \lr{1 + \frac{2\xx}{3n}}  + \frac{4}{3}K\hat{z}\sqrt{\frac{\xx}{n}}.
    \]
\end{lemma}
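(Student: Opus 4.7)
The plan is to combine the algebraic identity
\[
\tilde{\Sg} \eqdef \frac{1}{n}\sum_{i=1}^n (\X_i - \E\X_1)^2 = \hat{\Sg} + (\bar{\X} - \E\X_1)^2,
\]
obtained by expanding $\X_i - \bar{\X} = (\X_i - \E\X_1) - (\bar{\X} - \E\X_1)$ and using $\sum_i (\X_i - \E\X_1) = n(\bar{\X} - \E\X_1)$, with a matrix Bernstein deviation bound of $\tilde{\Sg}$ around its mean $\Sg$. Since both summands on the right are positive semidefinite,
\[
\lmax(\tilde{\Sg}) \le \hat{\sg}^2 + \norm*{\bar{\X} - \E\X_1}^2 \le \lr{\hat{\sg} + \norm*{\bar{\X} - \E\X_1}}^2,
\]
so by Weyl's inequality $\sg^2 \le \lmax(\tilde{\Sg}) + \lmax(\Sg - \tilde{\Sg})$, the problem reduces to controlling $\lmax(\Sg - \tilde{\Sg})$.

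For that deviation I would apply the Bernstein bound \eqref{res:ber} of Theorem~\ref{thm:martingale} to the i.i.d.\ centered matrices $\Q_i \eqdef \Sg - (\X_i - \E\X_1)^2$, which satisfy $\E \Q_i = 0$ and the deterministic one-sided bound $\lmax(\Q_i)_{+} \le \sg^2$. For the variance term, the pointwise estimate $(\X_1 - \E\X_1)^4 \preccurlyeq \norm{\X_1 - \E\X_1}^2 (\X_1 - \E\X_1)^2$ combined with a truncation of $\norm{\X_1-\E\X_1}$ at level $K z$ and the $\pa$-Orlicz tail bound---exactly in the spirit of the proof of Corollary~\ref{corr:covariances} (with Lemmas~\ref{lemma:bound_on_e_z} and~\ref{lemma:bound_on_4th_moment})---yields $\E \Q_1^2 \preccurlyeq z^2 K^2 \Sg + (\text{small})\,\Ii$, hence a variance proxy of order $(K z \sg)^2 / n$. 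Since $\lmax(\Q_i)_{+}$ is bounded (no heavy tail to treat), Theorem~\ref{thm:martingale} then gives, with probability at least $1 - d e^{-\xx}$,
\[
\lmax(\Sg - \tilde{\Sg}) \le c_1 K z \sg \sqrt{\tfrac{\xx}{n}} + c_2 \tfrac{\sg^2}{n}\, \xx
\]
for explicit constants $c_1, c_2$.

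Substituting into the reduction gives a quadratic inequality in $\sg$. Using AM--GM on the cross term, $K z \sg \sqrt{\xx/n} \le \eps \sg^2 + K^2 z^2 \xx / (4 n \eps)$ with $\eps \asymp \xx/n$, and invoking the assumption $n \ge 8\xx$ to keep the coefficient of $\sg^2$ strictly positive and close to $1$, one can solve for $\sg$ and take a square root to obtain a bound of the shape
\[
\sg \le \lr{\hat{\sg} + \norm*{\bar{\X} - \E\X_1}}\lr{1 + \tfrac{2 \xx}{3 n}} + c K z \sqrt{\tfrac{\xx}{n}}.
\]
A final step then replaces $z = z(K, \sg; \alpha)$ by $\hat{z} = z(K, \hat{\sg}; \alpha)$: via a short bootstrap that first produces a crude one-sided inequality $\sg \lesssim \hat{\sg} + K \hat{z} \sqrt{\xx/n}$ (at the cost of another $d e^{-\xx}$ failure probability, for the opposite deviation $\lmax(\tilde{\Sg} - \Sg)$), and then exploits the slow logarithmic dependence of $z$ on $\sg$ to absorb the difference into the constant. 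The third $d e^{-\xx}$ in the stated probability budget is the one from this bootstrap step.

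The main obstacle I anticipate is precisely this last swap $z \leadsto \hat z$: since $z$ depends on the unknown $\sg$ through $\logg(Ke/\sg)$, a careless application of matrix Bernstein produces a right-hand side that still implicitly involves $\sg$, and the bootstrap must be threaded carefully so as not to inflate the explicit constants $1 + \tfrac{2\xx}{3n}$ and $\tfrac{4}{3}$ appearing in the Lemma. The variance calculation for $\E \Q_1^2$ is the other technical bookkeeping step, but it is routine given the machinery already set up for Corollary~\ref{corr:covariances}.
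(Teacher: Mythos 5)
Your route is essentially the paper's: the same identity $\frac{1}{n}\sum_i(\X_i-\E\X_1)^2 = \hat{\Sg} + (\bar{\X}-\E\X_1)^2$, the same reduction $\sg^2 \le \hat{\sg}^2 + \norm{\bar{\X}-\E\X_1}^2 + \lmax\bigl(\frac{1}{n}\sum_i \Q_i - \E\Q_1\bigr)$ with $\Q_i$ the negated centered squares, a bounded matrix Bernstein bound for that deviation using $\lmax(\Q_i-\E\Q_1)\le\sg^2$ together with $\E(\Q_1-\E\Q_1)^2 \preccurlyeq \E(\X_1-\E\X_1)^4 \preccurlyeq \frac{5}{3}(\sg K z)^2\,\Ii$ from Lemma~\ref{lemma:bound_on_4th_moment}, and finally a quadratic inequality in $\sg$. (The paper invokes Tropp's Theorem~1.4 rather than Theorem~\ref{thm:martingale} for the bounded deviation; that choice is immaterial, though note that feeding a deterministic bound into Theorem~\ref{thm:martingale} requires converting it into an Orlicz norm and would degrade the constants.)

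Two places where your sketch as written would not deliver the stated bound. First, the $z\leadsto\hat z$ swap, which you single out as the main obstacle and propose to resolve by a bootstrap costing a further $d e^{-\x}$, is in fact a one-liner: if $\sg\le\hat\sg$ the claimed inequality is vacuous (its right-hand side already exceeds $\hat\sg$), and in the remaining case $\sg>\hat\sg$ the map $\sg\mapsto z(K,\sg;\alpha)$ is non-increasing, so $z\le\hat z$ and one substitutes directly --- no extra failure event, no bootstrap. Second, AM--GM on the cross term with $\eps\asymp\x/n$ cannot work: it converts $K z\sg\sqrt{\x/n}$ into $\eps\sg^2 + K^2z^2\x/(4n\eps)\asymp \eps\sg^2 + K^2 z^2$, so after extracting the square root the additive term is of order $K z$ rather than $K z\sqrt{\x/n}$ (which would wreck the $\x/n$-order term in Corollary~\ref{cor:empirical}); conversely, a constant $\eps$ spoils the $\bigl(1+\frac{2\x}{3n}\bigr)$ coefficient. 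The paper sidesteps the dilemma by solving the quadratic exactly, $\sg\le \frac{b+\sqrt{b^2+ac}}{a}\le\sqrt{c/a}+2b/a$ with $a=1-\frac{2\x}{3n}$ and $b = K z\sqrt{\x/n}$, which preserves both constants at once; you should do the same.
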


\begin{proof}
    The statement is trivial if $\sg \le \hat{\sg}$.
    Now, consider the case $\sg > \hat{\sg}$. 
    In the following, we will construct the bound of the form $\sg \le (\hat{\sg} + \norm*{\bar{\X} - \E \X_1}) (1 + C_1 \frac{\xx}{n}) + C_2\sqrt{\frac{\xx}{n}}$, with $C_1, C_2 > 0$ being some constants. 
        
    To construct this bound, we will use the square-root trick. Let $\Q_i \eqset - (\X_i - \E \X_i)^2$. First, we notice that
    \[
    \hat{\Sg} = \frac{1}{n} \sum \left(\X_i - \E \X_1\right)^2 - (\bar{\X} - \E \X_1)^2 
    = - \frac{1}{n} \sum_i \Q_i - (\bar{\X} - \E \X_1)^2.
    \]
    Thus,
    \[
    \Sg = - \E \Q_1 = \hat{\Sg} + 
    \frac{1}{n} \sum_i \Q_i - \E \Q_1 + (\bar{\X} - \E \X_1)^2.
    \]
    This yields 
    \begin{align}\label{eq:eq_lmin_aux}
       \sg^2 \eqset \lmax(\Sg) &\le \lmax(\hat{\Sg}) + \lmax\left(\frac{1}{n} \sum_i \Q_i - \E \Q_1\right) + \lmax\left((\bar{\X} - \E \X_1)^2\right) \nonumber \\
       &= \hat{\sg}^2 + \lmax\left(\frac{1}{n} \sum_i \Q_i - \E \Q_1\right) + \norm*{\bar{\X} - \E \X_1}^2.
    \end{align}

    Now we have to bound $\lmax\left(\frac{1}{n} \sum_i \Q_i - \E \Q_1\right)$. 
    We will use Theorem~1.4 from \cite{tropp2012user}.
    Its conditions are fulfilled, because all $\Q_i$ are i.i.d., $\Q_i \preccurlyeq 0$, and $\E \Q_i = - \Sg$, thus
    \[
    \lmax\left(\Q_i - \E \Q_1\right) \le \lmax(\Sg) \eqset \sg^2.
    \]
    
    Theorem~1.4 in \citep{tropp2012user} ensures that, with probability at least $1 - d e^{-\xx}$,
    \begin{align}
    \label{eq:aux_tropp_bern}
        \lmax\left(\frac{1}{n} \sum_i \Q_i - \E \Q_1 \right) &\le \sqrt{2 \lmax\left(\E \left(\Q_1 - \E \Q_1 \right)^2 \right) \frac{\xx}{n}} + \frac{2}{3} \sg^2 \frac{\xx}{n} \nonumber \\
        &\le 2 \sg K z \sqrt{\frac{\xx}{n}} + \frac{2}{3} \sg^2 \frac{\xx}{n}.
    \end{align}
    The last inequality follows from Lemma~\ref{lemma:bound_on_4th_moment} that ensures the bound
    \[
    \E (\Q_1 - \E \Q_1)^2 = \E \Q_1^2 - (\E \Q_1)^2 \preccurlyeq \E (\X_1 - \E \X_1)^4 \preccurlyeq \frac{5}{3} (\sg K z)^2 \Ii .
    \]

    Combining \eqref{eq:eq_lmin_aux} and \eqref{eq:aux_tropp_bern}, we get
    \begin{align*}
        \sg^2 &\le \hat{\sg}^2 + \norm*{\bar{\X} - \E \X_1}^2 + 2 \sg K z \sqrt{\frac{\xx}{n}} + \frac{2}{3} \sg^2 \frac{\xx}{n}.
    \end{align*}
    thus
    \[
    \left(1 - \frac{2 \xx}{3 n}\right) \sg^2 
    \le \hat{\sg}^2 + \norm*{\bar{\X} - \E \X_1}^2 + 2 \sg K z \sqrt{\frac{\xx}{n}} .
    \]
    Using a bound on the roots of a square inequality w.r.t.\ $\sigma$, we get
    \begin{align*}
        \sg &\le \sqrt{\frac{1}{1 - \frac{2 \xx}{3 n}} \left(\hat{\sg}^2 + \norm*{\bar{\X} - \E \X_1}^2 \right)} + \frac{2 K z}{1 - \frac{2 \xx}{3 n}} \sqrt{\frac{\xx}{n}} \\
        &\le \frac{\hat{\sg} + \norm*{\bar{\X} - \E \X_1}}{\sqrt{1 - \frac{2 \xx}{3 n}}} + \frac{2 K z}{1 - \frac{2 \xx}{3 n}} \sqrt{\frac{\xx}{n}}.
    \end{align*}

    Lemma's condition ensures $\frac{2\xx}{3n} \le \frac{1}{3}$, thus
    \[
    \sqrt{\frac{1}{1 - \frac{2 \xx}{3 n}}} \le 1 + \frac{2 \xx}{3 n} \quad\text{and}\quad \frac{1}{1 - \frac{2 \xx}{3 n}} \le \frac{3}{2}.
    \]
    Then
    \begin{align*}
        \sg &\le \left(\hat{\sg} + \norm*{\bar{\X} - \E \X_1}\right) \lr{1 + \frac{2 \xx}{3 n}} + \frac{4}{3} K z \sqrt{\frac{\xx}{n}}
    \end{align*}
    Finally, $\hat{\sg} \le \sg$ yields that $z \le \hat{z}$. 
    Thus, we get the result.
\end{proof}

\begin{proof}[Proof of Corollary \ref{cor:empirical}]
    To bound $\norm*{\bar{\X} - \E \X_1}$, we use (in two sides) Corollary~\ref{cor:iid} and Remark~\ref{rem:non-monotone}. 
    This yields that, with probability at least $1 - 2 d e^{-\xx}$,
    \begin{equation}
    \label{eq:bound_on_norm_bar_X}
        \norm*{\bar{\X} - \E \X_1} \le \inf_{\sg' \ge \sg} \lrc{\sg' \sqrt{2 \frac{\xx}{n}} + \frac{3}{4} K z(K, \sg'; \alpha) \frac{\xx}{n}} .
    \end{equation}

    In the case $\sg \le \hat{\sg}$ we immediately obtain
    \[
    \norm*{\bar{\X} - \E \X_1} \le \hat{\sg} \sqrt{2 \frac{\xx}{n}} + \frac{3}{4} K \hat{z} \frac{\xx}{n}.
    \]
    This ensures the result.

    If $\sg \ge \hat{\sg}$, it holds $z \le \hat{z}$. 
    Moreover, we notice that because of Lemma~\ref{lemma:useful_bound}
    \begin{align*}
    \label{eq:bound_on_sg_K}
        \sg^2 &= \lmax(\Sg) = \lmax\lr{\E (\X_1 - \E X_1)^2} \\
        &\le \E \norm{\X_1 - \E \X_1}^2
        \le 2 \lr{\frac{2}{\alpha e}}^{2/\alpha} \norm*{\norm{\X_1 - \E \X_1}}^2_{\pa
        } \le 2 K^2.
    \end{align*}
    This ensures $\hat{\sg} \le \sqrt{2} K \le \sqrt{2} K \hat{z}$.
    
    Now, we use Lemma~\ref{lemma:bound_on_sigma}.
    Keeping in mind Lemma's condition $\frac{\xx}{n} \le \frac{1}{8}$, we get with probability at least $1 - 3 d e^{-\xx}$
    \begin{align*}
        \sg &\le \lr{1 + \frac{2 \xx}{3 n}} \lr{\hat{\sg} + \norm*{\bar{\X} - \E \X_1}} + \frac{4}{3} K \hat{z} \sqrt{\frac{\xx}{n}} \\
        &\le \hat{\sg} + \sqrt{2} K \hat{z} \cdot \frac{2 \xx}{3 n} + \frac{13}{12} \norm*{\bar{\X} - \E \X_1} + \frac{4}{3} K \hat{z} \sqrt{\frac{\xx}{n}} \\
        &\le \hat{\sg} + \frac{13}{12} \norm*{\bar{\X} - \E \X_1} + \left(\frac{4}{3} + \frac{2}{3} \sqrt{2 \frac{\xx}{n}}\right) K \hat{z} \sqrt{\frac{\xx}{n}}\\
        &\le \hat{\sg} + \frac{13}{12} \norm*{\bar{\X} - \E \X_1} + \frac{5}{3} K \hat{z} \sqrt{\frac{\xx}{n}}
    \end{align*}
    
    Now, using the above result and \eqref{res:ber}, we get
    \begin{align*}
        \norm*{\bar{\X} - \E \X_1} & \le \sg \sqrt{2 \frac{\xx}{n}} + \frac{3}{4} K z \frac{\xx}{n} \\
        &\le \hat{\sg} \sqrt{2 \frac{\xx}{n}} + \norm*{\bar{\X} - \E \X_1} \cdot \frac{13}{12} \cdot \sqrt{2 \frac{\xx}{n}} + \frac{5 \sqrt{2}}{3} K \hat{z} \frac{\xx}{n} + \frac{3}{4} K \hat{z} \frac{\xx}{n} \\
        &\le \hat{\sg} \sqrt{2 \frac{\xx}{n}} + \norm*{\bar{\X} - \E \X_1} \cdot \frac{13}{12} \cdot \sqrt{2 \frac{\xx}{n}} + \frac{20 \sqrt{2} + 9}{12} K \hat{z} \frac{\xx}{n}\\
        &\le \hat{\sg} \sqrt{2 \frac{\xx}{n}} + \norm*{\bar{\X} - \E \X_1} \cdot \frac{13}{12} \cdot \sqrt{2 \frac{\xx}{n}} + \frac{7}{2} K \hat{z} \frac{\xx}{n}
    \end{align*}
    Thus, 
    \[
    \lr{1 - \frac{13}{12} \sqrt{2 \frac{\xx}{n}} } \norm*{\bar{\X} - \E \X_1} 
    \le \hat{\sg} \sqrt{2 \frac{\xx}{n}} + \frac{7}{2} K \hat{z} \frac{\xx}{n}.
    \]

    Further, we recall that $n \ge 8 \xx$ and let $t \eqset \sqrt{2 \xx/n} \le \frac{1}{2}$.
    \[
    \lr{1 - \frac{13}{12} t}^{-1} = 1 + \frac{\frac{13}{12} t}{1 - \frac{13}{12} t} 
    \le 1 + \frac{\frac{13}{12}}{1 - \frac{13}{24}} t
    \le 1 + \frac{26}{11} t \le \frac{24}{11}
    \]
    
    Thus, we get
    \begin{align*}
        \norm*{\bar{\X} - \E \X_1} &\le \lr{1 + \frac{26}{11} \sqrt{2 \frac{\xx}{n}}} \hat{\sg} \sqrt{2 \frac{\xx}{n}} + \frac{24}{11} \cdot \frac{7}{2} K \hat{z} \frac{\xx}{n} \\
        &\le \hat{\sg} \sqrt{2 \frac{\xx}{n}} + \frac{52}{11} \cdot \sqrt{2} K \hat{z} \cdot \frac{\xx}{n} + \frac{84}{11} K \hat{z} \frac{\xx}{n} \\
        &\le \hat{\sg} \sqrt{2 \frac{\xx}{n}} + 15 K \hat{z} \frac{\xx}{n}.
    \end{align*}
\end{proof}

\subsection{Proofs of Corollaries~\ref{corollary_maurer1} and~\ref{corollary:maurer2}}
\label{sec:proof_mcdiarmid}

Let $g(x_1, \dots, x_n)$ be a measurable function on $\mathcal{X}^n$, where $\mathcal{X}$ is a measurable space.

Let $I \subset [n]$ and $\bar{I} \eqset [n] \setminus I$.
We denote
\[
g(x_{I}, y_{\bar{I}}) \eqset g(z), 
\quad
z_i \eqset 
\begin{cases}
    x_i &~~\text{if}~i\in I,\\  
    y_i &~~\text{if}~i\in \bar{I}.
\end{cases}
\]

Let $\Pi_n$ be the set of all permutations of $[n]$, $\pi \in \Pi_n \colon [n] \to [n]$. 
We denote
\[
\pi(I) \eqset \{\pi(i): i \in I\}, \quad I \subset [n].
\]
 
\begin{lemma}
\label{lemma:permutations}
    Let $Y = (Y_1, \dots, Y_n)$ be a set of i.i.d.\ random variables on a measurable space $\mathcal{Y}$.
    Let $g_i \colon \mathcal{Y}^n \rightarrow \mathbb{R}_{+}$, $i \in [n]$, be measurable and integrable functions, such that each $g_i(y_1, \dots, y_n)$ does not depend on $y_i$ and
    \[
    \sum_{i \in [n]} g_i(Y) \le M~~\text{a.s.}
    \]
    Let $I \subset [n]$ and define
    \[
    \F_{I} \eqset \sg\lr{\lrc{Y_i: i \in I } }.
    \]
    Let $\Pi_n$ be the set of permutations of $[n]$. 
    Then it holds that
    \[
    \frac{1}{n!} \sum_{\pi \in \Pi_n} \sum^n_{i=1} \E [g_{\pi(i)}(Y) | \F_{\pi([1, i-1])}] 
    \le \frac{n+1}{n} M~~\text{a.s.}
    \]
\end{lemma}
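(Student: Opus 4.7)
The plan is to reduce the double sum to a single sum of certain averages and then bound those averages via a recursion. For each $i \in [n]$, $j \in [n]$, and $A \subset [n] \setminus \{j\}$ with $|A| = i - 1$, there are exactly $(i-1)!(n-i)!$ permutations $\pi$ with $\pi(i) = j$ and $\pi([1, i-1]) = A$. Applying this count (with $k := i - 1$) yields the identity
\[
\frac{1}{n!} \sum_{\pi \in \Pi_n} \sum_{i=1}^n \E[g_{\pi(i)}(Y) \mid \F_{\pi([1, i-1])}] \;=\; \frac{1}{n} \sum_{k=0}^{n-1} a_k, \qquad a_k \;:=\; \frac{1}{\binom{n-1}{k}} \sum_{|A| = k} \E\Bigl[\textstyle\sum_{j \notin A} g_j(Y) \,\Big|\, \F_A\Bigr].
\]
Since $g_j$ is $\F_{[n] \setminus \{j\}}$-measurable, the boundary values are immediate: $a_0 = \E[\sum_j g_j(Y)] \le M$ and $a_{n-1} = \sum_j g_j(Y) \le M$ a.s.

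The main work is the recursion $(n-k)\, a_k + k\, a_{k-1} \le n M$ for $k = 1, \dots, n-1$, which I would derive in two steps. First, $\sum_{j \notin A} g_j = \sum_j g_j - \sum_{j \in A} g_j \le M - \sum_{j \in A} g_j$ a.s., so taking $\E[\cdot \mid \F_A]$ and averaging over $|A| = k$ gives $a_k \le \frac{n}{n-k} M - \binom{n-1}{k}^{-1} \sum_{|A|=k} \sum_{j \in A} \E[g_j \mid \F_A]$. Second, for $j \in A$ the hypothesis that $g_j$ does not depend on $Y_j$ gives $\E[g_j \mid \F_A] = \E[g_j \mid \F_{A \setminus \{j\}}]$; writing $A = \{j\} \cup A'$ with $|A'| = k - 1$ and reindexing the sum over $(A, j)$ as a sum over $(j, A')$ converts $\sum_{|A|=k} \sum_{j \in A} \E[g_j \mid \F_A]$ into $\binom{n-1}{k-1} a_{k-1}$. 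Using $\binom{n-1}{k-1} / \binom{n-1}{k} = k / (n-k)$ produces the recursion.

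To finish, I would sum the recursion over $k = 1, \dots, n - 1$. A direct count shows that the coefficient of $a_k$ in the resulting left-hand side is $n + 1$ for $1 \le k \le n - 2$ and $1$ for $k \in \{0, n-1\}$, yielding
\[
a_0 + a_{n-1} + (n+1) \sum_{k=1}^{n-2} a_k \;\le\; (n-1) n\, M.
\]
Combined with $a_0, a_{n-1} \le M$, this rearranges to $(n+1) \sum_{k=0}^{n-1} a_k \le n(n+1)\, M$, i.e., $\frac{1}{n} \sum_{k=0}^{n-1} a_k \le M$, which is in fact sharper than the claimed $\frac{n+1}{n} M$ and so implies the lemma.

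The main obstacle is the recursion itself: one must translate a conditional expectation in which the conditioning set $A$ contains $j$ into one in which it does not, exploiting independence of $g_j$ from $Y_j$ to relate the two quantities $a_k$ and $a_{k-1}$. Everything else---the combinatorial counting of permutations, the boundary values, and the final summation---is mechanical bookkeeping.
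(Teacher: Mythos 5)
Your proof is correct, and it takes a genuinely different route from the paper's. Both arguments begin with the same permutation count (each pair $(A,j)$ with $|A|=i-1$, $j\notin A$ arises from $(i-1)!(n-i)!$ permutations) and both exploit the same structural fact---that $g_j$ does not depend on $y_j$, so the index $j$ can be moved across the conditioning boundary. But the mechanisms diverge from there. The paper replaces each conditional expectation by an expectation over an independent copy $Y'$, uses the pointwise identity $g_j(Y_I, Y'_{\bar I}) = g_j(Y_{I\setminus\{j\}}, Y'_{\bar I\cup\{j\}})$ to rewrite the ``$j\in I$'' sums as ``$j\notin J$'' sums one level down, and then forms a convex combination with weights $\alpha_i = i/n$ so that the two representations reassemble into full sums $\sum_{j=1}^n g_j(Y_I, Y'_{\bar I}) \le M$; the final count over $i=0,\dots,n$ produces the constant $\tfrac{n+1}{n}$. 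You instead stay at the level of conditional expectations, use $\E[g_j \mid \F_A] = \E[g_j \mid \F_{A\setminus\{j\}}]$ (valid by independence of the $Y_i$ and the fact that $g_j$ is $\F_{[n]\setminus\{j\}}$-measurable) to derive the two-term recursion $(n-k)a_k + k a_{k-1} \le nM$, and telescope it together with the boundary values $a_0, a_{n-1} \le M$. I checked the identity $\tfrac{1}{n!}\sum_\pi\sum_i \E[g_{\pi(i)}\mid\F_{\pi([1,i-1])}] = \tfrac1n\sum_{k=0}^{n-1}a_k$ (the normalization $k!(n-k-1)!/n! = 1/(n\binom{n-1}{k})$ is right), the recursion (using $\binom{n}{k}/\binom{n-1}{k} = n/(n-k)$ and $\binom{n-1}{k-1}/\binom{n-1}{k} = k/(n-k)$), and the final summation; all steps hold, including the degenerate cases $n=1,2$ where the recursion range is short or empty. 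What your approach buys is a strictly sharper constant: you obtain $\tfrac1n\sum_k a_k \le M$ rather than $\tfrac{n+1}{n}M$, essentially because the recursion handles the boundary levels $k=0$ and $k=n-1$ exactly, whereas the paper's convex combination effectively double-counts them. The improvement is harmless for the downstream corollaries (it would only remove the $\tfrac{n+1}{n}$ factors in Corollaries~\ref{corollary_maurer1} and~\ref{corollary:maurer2}), but it is a real, if minor, strengthening.
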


The proof is postponed to the Appendix~\ref{sec:proof_permutations}. 
Now, we are ready to prove the corollaries.

\begin{proof}[Proof of Corollary \ref{corollary_maurer1}]
    We set 
    \[
    X_i \eqset \E [f_i(Y) | \F_i] = \E [f(Y) | \F_i] - \E [f(Y) | \F_{i-1}],
    \]
    so that 
    \[
    \sum^{n}_{i=1} X_i = f(Y) - \E f(Y).
    \]

    Let $\lm > 0$.
    We consider an arbitrary permutation $\pi$ and set
    \[
    V^{\pi}_{i}(\lm) \eqset \ln \E 
    \left[\elr{\lm f(Y) - \E\lrs{ f(Y)| \F_{\pi(i)}}  } | \F_{\pi([1, i-1])} 
    \right].
    \]
    By Proposition~\ref{prop:mart_bound} it holds
    \[
    \E \elr{\lm f(Y) - \sum_i V^{\pi}_{i}(\lm)} \le 1.
    \]
    Thus, Jensen's inequality yields
    \[
    \E\elr{\lm f(Y) - \frac{1}{n!}\sum_{\pi}\sum^n_{i=1} V^{\pi}_i(\lm) } \le \frac{1}{n!}\sum_{\pi} \E \elr{\lm f(y) - \sum^n_{i=1} V^{\pi}_i(\lm)} \le 1.
    \]
    
    Moreover, \eqref{eq:sum_Vi} ensures
    \[
    \sum_i V^{\pi}_i(\lm) \le \frac{\phi(\lm K z)}{(K z)^2} \sum_i \E\lrs{\sg^2_{\pi(i)} \bigg| \F_{\pi(i-1)} } + \frac{2}{K^2}\lr{\phi(\lm Kz) - \frac{\lr{\lm Kz}^2 }{2}} e^{-z^{\alpha}} \sum_i \E \lrs{U^2_{\pi(i)} \bigg| \F_{\pi(i-1)}}.
    \]
    
    Notice that the definitions of $\sg^2_i$ and $U_i$ are equivalent to
    \begin{gather*}
        \sg^2_i = \sg^2_i(Y), \quad \sg^2_i(y) \eqset \E f^2_i(y_1, \dots, y_{i-1}, Y_i, y_{i+1}, \dots, y_n) , \\
        U_i = U_i(Y), 
        \quad
        U_i(y) \eqset \norm*{f_i(y_1, \dots, y_{i-1}, Y_i, y_{i+1}, \dots, y_n)}_{\pa}.
    \end{gather*}
    Now we apply Lemma \ref{lemma:permutations} first setting $g_i = \sg^2_i$, and then setting $g_i = U^2_i$, and get
    \begin{gather*}
        \frac{1}{n!} \sum_\pi \sum_i \E[\sg^2_{\pi(i)} | \F_{\pi(i-1)}] \le \frac{n+1}{n} \sg^2~\text{a.s.},\\
        \frac{1}{n!} \sum_\pi \sum_i \E[U^2_{\pi(i)} | \F_{\pi(i-1)}] \le \frac{n+1}{n} U^2~\text{a.s.}
    \end{gather*}
    Thus,
    \[
    \frac{1}{n!}\sum_{\pi} \sum_{i} V^{\pi}_i(\lm) \le \frac{\phi(\lm K z)}{(K z)^2} \frac{n+1}{n} \sg^2 + \frac{2}{K^2}\lr{\phi(\lm Kz) - \frac{\lr{\lm Kz}^2 }{2}} e^{-z^{\alpha}} \frac{n+1}{n} U^2~\text{a.s.}
    \]
    
    The result follows immediately from Corollary~\ref{cor:1d_case}.
\end{proof}
 
\begin{proof}[Proof of Corollary \ref{corollary:maurer2}]
    We set $f(y) = \norm*{\sum_i y_i}$ and notice, that
    \[
    f_i(Y) = \norm*{\sum_i Y_i} - \E'\norm*{\sum_{j\neq i}Y_j + Y'_i}.
    \]
    Applying the triangle inequality, we get
    \[
    |f_i(Y)| \le \E'\norm*{Y_i - Y'_i}
    \le \norm{Y_i} + \E \norm{Y_i} .
    \]
    Thus,
    \[
    \norm*{f_i(Y) | \F_{-i}}_{\pa} \le \norm*{\norm{Y_i} + \E \norm{Y_i}}_{\pa} \le 
    \norm*{\norm*{Y_i}}_{\pa} + \E \norm{Y_i} \le 2 \norm*{\norm*{Y_i}}_{\pa} \quad \text{a.s.}
    \]
    Similarly, 
    \[
    \E \lrs{f_i^2(Y) | \F_{-i}} \le 4 \E\norm*{Y_i}^2 \quad \text{a.s.}
    \]
    The result follows from Corollary~\ref{corollary_maurer1}.
\end{proof}

\bibliographystyle{unsrtnat}
\bibliography{references}  %%% Uncomment this line and comment out the ``thebibliography'' section below to use the external .bib file (using bibtex).

\begin{appendices} \section{Tail regimes}

\label{seq:proof_r}

To get~\eqref{eq:subg_r}, we recall that $\alpha z^{\alpha} \ge 4$. 
The result holds due to
\begin{align*}
    \max_k \lmax(\Sm_k) \le \sg \sqrt{2 \xx} + \frac{3}{4} \kz \xx \le \sg \sqrt{2 \xx} \lr{1 + 2 \sqrt{2 \kzss \xx}}
    \le \sg \sqrt{2 \xx} \lr{1 + 2 \sqrt{2 e}} \le 6 \sg \sqrt{2 \xx}.
\end{align*}

Bound \eqref{eq:subpoiss_r} holds due to
\begin{align*}
    \max_k \lmax(\Sm_k) &\le \sg \sqrt{2 \xx} + 4 \kz \frac{\xx}{\ln \lr{\kzss \xx}} \\
    &\le \frac{\sg^2}{\kz}\lr{2 \sqrt{\kzss \xx} + \frac{4 \kzss \xx}{ \ln\lr{\kzss \xx} }} \\
    &\le 8 \frac{\sg^2}{\kz}\frac{\kzss \xx}{\ln\lr{\kzss \xx}} = 8 \frac{\kz \xx}{\ln\lr{\kzss \xx}} .
\end{align*}
The last inequality is due to $\frac{1}{2} \ln x = \ln \sqrt{x} \le \sqrt{x} - 1$.

Bound \eqref{eq:subexp_r} holds due to
\begin{align*}
    \max_k \lmax(\Sm_k) \le \sg \sqrt{2 \xx} + \frac{2 \kz}{\az} \xx 
    \le \frac{\sg^2}{\kz} \lr{\sqrt{2 \kzss \xx} + \frac{1}{\az} 4 \kzss \xx} \le \frac{6 \kz}{\az} \xx.
\end{align*}
The last inequality follows from
\[
\sqrt{2 x} + \frac{4 x}{\az} = \frac{\sqrt{2 x}}{\az} \lr{\az + 2 \sqrt{2 x}} \le \frac{6 x}{\az}.
\]
The last inequality is due to
$\sqrt{x} \ge \elr{\az} \ge e \az$, since $x \ge \elr{2\az}$.

\section{Auxiliary results}\label{sec:auxiliary}

\begin{proof}[Proof of Lemma \ref{lemma:rho}]
    We consider only $x > 0$.
    Recall that $\phi'(t) = e^t - 1 = \phi(t) + t$, thus
    \begin{align*}
        \rho_{\lm,\alpha}'(x) &= \lm \left(\phi'(\lm x) - \lm x\right) \elr{- x^\alpha} - \alpha x^{\alpha - 1} \left(\phi(\lm x) - \frac{(\lm x)^2}{2}\right) \elr{- x^\alpha} \\
        &= \left(\lm \phi(\lm x) - \alpha x^{\alpha - 1} \left(\phi(\lm x) - \frac{(\lm x)^2}{2}\right)\right) \elr{- x^\alpha} \\
        &= \left(\frac{\lm \phi(\lm x)}{\phi(\lm x) - (\lm x)^2 / 2} - \alpha x^{\alpha-1}\right) \rho_{\lm,\alpha}(x).
    \end{align*}
    Since $\phi(\lm x) > \frac{(\lm x)^2}{2}$, $\rho_{\lm,\alpha}(x) > 0$, and hence
    \[
    \sgn \rho_{\lm,\alpha}'(x) = \sgn\left(\frac{\lm \phi(\lm x)}{\phi(\lm x) - (\lm x)^2 / 2} - \alpha x^{\alpha-1}\right) 
    = \sgn\left(\frac{\lm x \phi(\lm x)}{\phi(\lm x) - (\lm x)^2 / 2} - \alpha x^\alpha\right) .
    \]
\end{proof}

\begin{lemma}
\label{lemma:aux_increasing}
    The function $f(t) \eqset \frac{t \phi'(t)}{\phi(t)}$, extended at $0$ by continuity as $f(0) = 2$, is increasing on $\R$
\end{lemma}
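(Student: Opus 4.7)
The plan is to show $f'(t) > 0$ for every $t \ne 0$ by reducing the sign question to an elementary factorization, and then to conclude strict monotonicity on all of $\R$ using continuity at the removable point $t = 0$.

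For $t \ne 0$ we have $\phi(t) > 0$, so $f$ is differentiable there and
\[
f'(t) = \frac{\bigl(\phi'(t) + t\phi''(t)\bigr)\phi(t) - t\bigl(\phi'(t)\bigr)^2}{\phi(t)^2}.
\]
Substituting $\phi(t) = e^t - 1 - t$, $\phi'(t) = e^t - 1$, $\phi''(t) = e^t$ and expanding, the cross terms collapse into a multiple of $(e^t - 1)^2$ and the numerator reduces to
\[
N(t) \eqset (e^t - 1)^2 - t^2 e^t.
\]
Hence it suffices to prove $N(t) > 0$ for every $t \ne 0$.

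The key step is the difference-of-squares factorization
\[
N(t) = \bigl(e^t - 1 - t e^{t/2}\bigr)\bigl(e^t - 1 + t e^{t/2}\bigr) \eqset g(t)\, h(t),
\]
after which I claim both factors carry the sign of $t$. A direct computation gives $g'(t) = e^{t/2}\phi(t/2) \ge 0$ with equality only at $t = 0$, so $g$ is strictly increasing through $g(0) = 0$; thus $\sgn g(t) = \sgn t$. For $h$, an inspection is enough: if $t > 0$ then both $e^t - 1 > 0$ and $t e^{t/2} > 0$, while if $t < 0$ both summands are strictly negative. Consequently $g(t) h(t) > 0$ for every $t \ne 0$, and so $f'(t) > 0$ on $\R \setminus \{0\}$.

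To finish, the Taylor expansions $t\phi'(t) = t^2 + t^3/2 + O(t^4)$ and $\phi(t) = t^2/2 + t^3/6 + O(t^4)$ give $f(t) = 2 + t/3 + O(t^2)$ near $0$, so the extension $f(0) \eqset 2$ makes $f$ continuous (indeed analytic) on $\R$. A continuous function whose derivative is strictly positive off a single point is strictly monotone on the whole line (apply the mean value theorem separately on $[a,0]$ and $[0,b]$ when $a<0<b$). The only nontrivial step is spotting the identity $t^2 e^t = (t e^{t/2})^2$ that unlocks the factorization of $N$; after that the sign analysis is elementary.
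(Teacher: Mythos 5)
Your proof is correct, and it reaches the same numerator as the paper but disposes of it by a different device. Both arguments reduce to showing that $N(t) = (e^t-1)^2 - t^2 e^t$ (the paper writes this as $u(t) = 1 + e^{2t} - e^t(2+t^2)$, which is the same quantity) is positive for $t \neq 0$. The paper differentiates once more: $u'(t) = 2e^t\bigl(\phi(t) - \tfrac{t^2}{2}\bigr)$, invokes Proposition~\ref{prop:phi_properties} to get $\sgn u'(t) = \sgn t$, and concludes that $u$ has its global minimum $u(0)=0$ at the origin. You instead exploit the identity $t^2 e^t = (t e^{t/2})^2$ to factor $N(t) = g(t)h(t)$ with $g(t) = e^t - 1 - te^{t/2}$ and $h(t) = e^t - 1 + te^{t/2}$, and check that each factor carries the sign of $t$ — for $g$ via $g'(t) = e^{t/2}\phi(t/2) \ge 0$, which only needs the elementary inequality $e^s \ge 1+s$, and for $h$ by inspection. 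Your route avoids the appeal to the monotonicity of $\phi(t)/t^2$ and is in that sense more self-contained; the paper's route is more systematic and reuses a fact it needs elsewhere anyway. Your closing remarks (continuity at $0$ via the Taylor expansion, and the mean-value-theorem argument to pass from $f' > 0$ off a single point to strict monotonicity on all of $\R$) are also fine and match the paper's treatment of the removable singularity.
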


\begin{proof}
    To prove the monotonicity of $f$, one has to show that $f$ is continuous at $0$, and $f'(t) > 0$ for all $t \neq 0$.
    
    Note that
    \[
    f(t) = \frac{t (e^t - 1)}{e^t - t - 1} = \frac{t \left(t + o(t)\right) }{\frac{t^2}{2} + o(t^2) } = 2 + o(1), ~~ t\rightarrow 0,
    \]
    hence it can be continuously extended at $0$ as $f(0) = 2$.
    
    The first derivative is
    \[
    f'(t) = \frac{1 + e^{2 t} - e^t (2 + t^2)}{\phi^2(t)} .
    \]
    Consider $u(t) = 1 + e^{2 t} - e^t (2 + t^2)$, its derivative is
    \[
    u'(t) = e^t (2e^t - 2t - 2 - t^2) = 2 e^t \left(\phi(t) - \frac{t^2}{2}\right).
    \]
    By Proposition~\ref{prop:phi_properties}, $\sgn\left(\phi(t) - \frac{t^2}{2}\right) = \sgn(t)$, thus $u$ attains a global minimum at $t = 0$ and $u(t) > u(0) = 0$ for all $t \neq 0$. 
    Therefore, $f'(t) > 0$ for all $t \neq 0$.
\end{proof}

\begin{lemma}\label{lemma:convexity_xphi}
    The function $\upsilon(t)$,
    extended at $0$ by continuity as $\upsilon(0) = 3$, is increasing and strictly convex on $\R$.
    Moreover, for any $t \in \R$
    \[
    \upsilon(t) \le \max\{4, 1.5 t\} .
    \]
\end{lemma}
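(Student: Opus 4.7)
The plan is to verify, in order, continuous extension at $0$, monotonicity, strict convexity, and the bound $\upsilon(t)\le\max\{4,1.5t\}$, leveraging the identity
$$\upsilon(t)=\frac{t\phi(t)}{\phi(t)-t^2/2}=t+\frac{t}{2G(t)},\qquad G(t):=\frac{\phi(t)}{t^2}-\frac{1}{2},$$
and, for $t>0$, the probabilistic interpretation $\upsilon(t)=\E[K\mid K\ge 3]$ with $K\sim\mathrm{Poisson}(t)$, obtained from $\phi(t)-t^2/2=\sum_{k\ge 3}t^k/k!$. By Proposition~\ref{prop:phi_properties}, $G$ is analytic with $G(0)=0$ and Taylor expansion $G(t)=t/6+t^2/24+O(t^3)$, so $t/G(t)\to 6$ at $0$ and $\upsilon$ extends analytically with $\upsilon(0)=3$.

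For monotonicity, the Poisson representation handles $t>0$: the Poisson family has monotone likelihood ratio in $k$ with respect to $t$, so the conditional expectation of the increasing function $k\mapsto k$ on the upward-closed event $\{k\ge 3\}$ is increasing in $t$. For $t\le 0$, I would compute $\upsilon'$ directly; using $\phi'=\phi+t$ one gets
$$\upsilon'(t)=\frac{2\phi(t)^2-t^2\bigl((t-1)e^t+1\bigr)}{2\bigl(\phi(t)-t^2/2\bigr)^2},$$
and the numerator---which vanishes to order six at $0$---can be shown nonnegative on $(-\infty,0)$ either by an iterated-integral representation of the integrand or by a power-series argument in $-t$.

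For strict convexity, differentiation of the probabilistic formula gives $\upsilon'(t)=\sigma^2/t$ and $\upsilon''(t)=(\mu_3-\sigma^2)/t^2$, where $\sigma^2$ and $\mu_3$ are respectively the conditional variance and third central moment of $K\mid K\ge 3$. Strict convexity on $(0,\infty)$ then reduces to $\mu_3>\sigma^2$, which I would verify via an explicit generating-function computation: expand $\log Z(t)$ with $Z(t)=\sum_{k\ge 3}t^k/k!$ and check the relevant cumulant inequality. On $(-\infty,0]$ convexity follows from extending analytically in a neighbourhood of $0$ (already established) together with a direct sign analysis of $\upsilon''$ for $t$ bounded away from $0$.

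Finally, for the bound $\upsilon(t)\le\max\{4,1.5t\}$: monotonicity and the limits $\upsilon(-\infty)=2$, $\upsilon(0)=3$ give $\upsilon\le 3<4$ on $(-\infty,0]$. Since $\max\{4,1.5t\}$ switches from $4$ to $1.5t$ at $t=8/3$ (where both equal $4$), monotonicity reduces $(0,8/3]$ to the single check $\upsilon(8/3)\le 4$. Both $\upsilon(8/3)\le 4$ and the inequality $\upsilon(t)\le 1.5t$ on $[8/3,\infty)$ reduce by straightforward algebra to $\phi(t)\ge 3t^2/2$ (at $t=8/3$ and on $[8/3,\infty)$ respectively); the function $\phi(t)-3t^2/2$ has second derivative $e^t-3>0$ for $t>\ln 3$, so it is strictly convex and eventually increasing past $\ln 3$, and the endpoint check $e^{8/3}\ge 43/3$ settles both inequalities. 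The main obstacle is the rigorous proof of strict convexity on all of $\R$: although $\upsilon''=(\mu_3-\sigma^2)/t^2$ is transparent, the inequality $\mu_3>\sigma^2$ for the conditioned Poisson does not follow from standard stochastic-dominance results and appears to require an explicit (if elementary) generating-function calculation, with a separate hands-on verification needed for $t<0$.
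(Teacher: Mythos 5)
Your conditioned-Poisson reframing is correct and elegant: for $t>0$ one indeed has $\upsilon(t)=\E[K\mid K\ge 3]$ with $K\sim\mathrm{Poisson}(t)$, the MLR argument does give monotonicity on $(0,\infty)$, and the identities $\upsilon'=\sigma^2/t$, $\upsilon''=(\mu_3-\sigma^2)/t^2$ check out. The final bound $\upsilon(t)\le\max\{4,1.5t\}$ is also handled correctly and essentially as in the paper (the paper picks the crossover point $t_0=2.68$ and uses that $\upsilon$ is increasing while $\phi(t)/(\phi(t)-t^2/2)$ is decreasing; your reduction to $\phi(t)\ge \tfrac32 t^2$ on $[8/3,\infty)$ is the same computation, though you should also record that $\phi'(8/3)-3\cdot\tfrac83>0$ so that convexity of $\phi(t)-\tfrac32 t^2$ actually propagates nonnegativity to the right).

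The genuine gap is that strict convexity --- the heart of the lemma, and the property actually invoked in the proof of Lemma~\ref{lemma:phi_bound} --- is never established. For $t>0$ you reduce it to $\mu_3>\sigma^2$ for the conditioned Poisson and then concede this "appears to require an explicit generating-function calculation" that you do not carry out; for $t<0$ the probabilistic representation is unavailable and you offer only "a direct sign analysis of $\upsilon''$ for $t$ bounded away from $0$," which is precisely the hard step, not a proof of it. The same applies to monotonicity on $(-\infty,0]$, where the nonnegativity of the numerator of $\upsilon'$ is asserted with two candidate methods but neither is executed. The paper closes exactly this gap by brute force: it computes
\[
\upsilon''(t) = \frac{t e^t}{4 \left(\phi(t) - \tfrac{t^2}{2}\right)^3}\, u(t), \qquad u(t) = t^4 + 8 t^2 - 24 + 4 (t^2 + 6) \cosh t - 24 t \sinh t,
\]
notes that the prefactor is positive for $t\neq 0$ because $\sgn\bigl(\phi(t)-\tfrac{t^2}{2}\bigr)=\sgn(t)$, and shows $u(t)>0$ for $t\neq 0$ by differentiating six times: $u^{(i)}(0)=0$ for $i=0,\dots,5$ and $u^{(6)}(t)=4t^2\cosh t+24t\sinh t>0$. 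It then obtains monotonicity for free from convexity together with $\lim_{t\to-\infty}\upsilon'(t)=0$, a shortcut you could adopt to eliminate your separate (and also incomplete) $t\le 0$ monotonicity computation. To make your proposal a proof you must either perform the cumulant computation for $t>0$ \emph{and} supply a genuine argument for $t<0$, or fall back on an explicit $\upsilon''$ computation of the paper's type.
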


\begin{proof}
    Note that
    \[
    \upsilon(t) =  \frac{t \phi(t)}{\phi(t) - \frac{t^2}{2}}
    = \frac{t (e^t - 1 - t)}{e^t - 1 - t - \frac{t^2}{2}}.
    \]
    To prove the strict convexity of $\upsilon$, one has to show that $\upsilon$ and $\upsilon'$ are continuous at $0$, and $\upsilon'' (t) > 0$ for all $t \neq 0$.
    Note that 
    \[
    \upsilon(t) = \frac{t \left(\frac{t^2}{2} + o(t^2)\right)}{\frac{t^3}{6} + o(t^3)} = 3 + o(1), \quad t \to 0,
    \]
    hence it can be continuously extended at $0$ with $\upsilon(0) = 3$. 
    The first derivative is 
    \begin{align*}
        \upsilon'(t) &= \frac{\phi(t) + t \phi'(t)}{\phi(t) - \frac{t^2}{2}} - \frac{t \phi(t)^2}{\left(\phi(t) - \frac{t^2}{2}\right)^2} \\
        &= \frac{\left(\frac{t^2}{2} + \frac{t^3}{6} + t (t + \frac{t^2}{2}) + o(t^3)\right) \left(\frac{t^3}{6} + \frac{t^4}{24} + o(t^4)\right) - t \left(\frac{t^2}{2} + \frac{t^3}{6} + o(t^3)\right)^2}{\left(\frac{t^3}{6} + o(t^3)\right)^2} \\
        &= \frac{\left(\frac{3 t^2}{2} + \frac{2 t^3}{3} + o(t^3)\right) \left(\frac{t^3}{6} + \frac{t^4}{24} + o(t^4)\right) - t \left(\frac{t^4}{4} + \frac{t^5}{6} + o(t^5)\right)}{\frac{t^6}{36} + o(t^6)} \\
        &= \frac{\left(\frac{3}{2} \frac{1}{24} + \frac{2}{3} \frac{1}{6} - \frac{1}{6}\right) t^6 + o(t^6)}{\frac{t^6}{36} + o(t^6)} = \frac{t^6 + o(t^6)}{4 t^6 + o(t^6)} = \frac{1}{4} + o(1), \quad t \to 0,
    \end{align*}
    thus it is also continuous at $0$ with $\upsilon'(0) = \frac{1}{4}$.

    The second derivative is 
    \[
    \upsilon''(t) = \frac{t e^t}{4 \left(\phi(t) - \frac{t^2}{2}\right)^3} \underbrace{\left(t^4 + 8 t^2 - 24 + 4 (t^2 + 6) \cosh t - 24 t \sinh t\right)}_{u(t)} .
    \]
    By Proposition~\ref{prop:phi_properties}, $\sgn\left(\phi(t) - \frac{t^2}{2}\right) = \sgn(t)$, thus the first term is positive for any $t \neq 0$.
    
    Now, we show that $u$ is positive as well.
    We explicitly compute 
    \begin{align*}
        u^{(1)}(t) &= 4 t \left(t^2 + 4 - 4 \cosh t + t \sinh t\right),\\
        u^{(2)}(t) &= 4 (3 t^2 + 4 + (t^2 - 4) \cosh t - 2 t \sinh t),\\
        u^{(3)}(t) &= 4 (6 t + (t^2 - 6) \sinh t),\\
        u^{(4)}(t) &= 4 (6 + (t^2 - 6) \cosh t + 2 t \sinh t),\\
        u^{(5)}(t) &= 4 (4 t \cosh t + (t^2 - 4) \sinh t),\\
        u^{(6)}(t) &= 4 t (t \cosh t + 6 \sinh t) = 4 t^2 \cosh t + 24 t \sinh t.
    \end{align*}
    Since $t \sinh t > 0$ and $\cosh t > 1$ for all $t \neq 0$, one immediately obtains that $u^{(6)}(t) > 0$, $t \neq 0$.
    Moreover, $u^{(i)}(0) = 0$ for all $i = 0, \dots, 5$. 
    By Taylor's theorem, this ensures that $u(t) \ge 0$ for all $t$, with the only global minimum $u(0) = 0$. 
    Thus, $\upsilon''(t) > 0$ for all $t \neq 0$, and therefore $\upsilon$ is strictly convex.

    Finally,
    \[
    \lim_{t \to -\infty} \upsilon(t) = \lim_{t \to -\infty} \frac{-t^2 + o(t^2)}{-\frac{t^2}{2} + o(t^2)} = 2,
    \]
    hence $\lim_{t \to -\infty} \upsilon'(t) = 0$ and $f$ is strictly increasing.

    To get the last result, we consider $f(t) = \frac{\phi(t)}{\phi(t) - t^2 / 2}$. 
    Note that $f$ is decreasing by Proposition~\ref{prop:phi_properties} and $\upsilon$ is increasing by Lemma~\ref{lemma:convexity_xphi}. 
    
    If $t_0 = 2.68$,  $\upsilon(t_0) < 1.5$ and $\upsilon(t_0) < 4$. Thus, 
    \[
    \upsilon(t) \le \min\{\upsilon(t_0), t f(t_0)\} < \min\{4, 1.5 t\}.
    \]
\end{proof}

\begin{lemma}
\label{lemma:useful_bound}
    For all $\alpha > 0$, $t \ge 0$, and $p > 0$ it holds
    \[
    t^p \le \left(\frac{p}{\alpha e} \right)^{p/\alpha} e^{t^{\alpha}}.
    \]
    Let $t_1$ be such that $\alpha t^{\alpha}_1 \ge p$. Then for all $t_2 \ge t_1$ 
    \[
    t_2^p e^{-t_2^{\alpha}} \le t_1^p e^{-t_1^{\alpha}}.
    \]
\end{lemma}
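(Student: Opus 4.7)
The plan is to reduce both statements to a single-variable optimization problem for $h(t) = t^p e^{-t^\alpha}$ on $[0, \infty)$ (equivalently, for $g(t) = p \log t - t^\alpha$). Taking the logarithmic derivative, I would compute
\[
\frac{h'(t)}{h(t)} = \frac{p}{t} - \alpha t^{\alpha - 1} = \frac{p - \alpha t^\alpha}{t},
\]
so $h$ is strictly increasing on $[0, t^*]$ and strictly decreasing on $[t^*, \infty)$, where $t^* \eqset (p/\alpha)^{1/\alpha}$ is the unique critical point. Both claims of the lemma will follow from this monotonicity.

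For the first inequality, I would substitute $t = t^*$ into $h$ to obtain the global maximum
\[
\max_{t \ge 0} t^p e^{-t^\alpha} = (t^*)^p e^{-(t^*)^\alpha} = \lr{\frac{p}{\alpha}}^{p/\alpha} e^{-p/\alpha} = \lr{\frac{p}{\alpha e}}^{p/\alpha}.
\]
Rearranging $t^p e^{-t^\alpha} \le (p/(\alpha e))^{p/\alpha}$ into $t^p \le (p/(\alpha e))^{p/\alpha} e^{t^\alpha}$ yields the first claim. A clean alternative is to substitute $s = t^\alpha$ and $q = p/\alpha$, reducing the bound to the classical estimate $s^q e^{-s} \le (q/e)^q$.

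For the second inequality, the hypothesis $\alpha t_1^\alpha \ge p$ is exactly $t_1 \ge t^*$, so both $t_1$ and $t_2 \ge t_1$ lie in the region where $h$ is decreasing. Therefore $h(t_2) \le h(t_1)$, which is the desired estimate.

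I do not anticipate any genuine obstacle: the argument is a direct critical-point analysis, and the only minor care needed is handling $t = 0$ (where $h$ vanishes, so the inequality is trivial) and checking that the critical value matches the stated constant $(p/(\alpha e))^{p/\alpha}$, which follows from $-(p/\alpha) + (p/\alpha)\log(p/\alpha) = (p/\alpha)\log(p/(\alpha e))$.
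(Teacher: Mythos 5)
Your proposal is correct and follows essentially the same argument as the paper: differentiate $t^p e^{-t^\alpha}$, locate the unique critical point at $\alpha t^\alpha = p$, evaluate the maximum there to get $(p/(\alpha e))^{p/\alpha}$, and use the sign of the derivative for $\alpha t^\alpha \ge p$ to deduce the monotonicity claim.
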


\begin{proof}
    Taking the derivative, one gets
    \[
    \left(t^p e^{-t^{\alpha}} \right)' = (p t^{p-1} - \alpha t^{\alpha - 1} t^p) e^{-t^{\alpha}} = (p - \alpha t^\alpha) t^{p-1} e^{-t^{\alpha}} .
    \]
    Thus, the maximum of $t^p e^{-t^{\alpha}}$ is attained at $\alpha t^{\alpha} = p$, where 
    \[
    t^p e^{-t^{\alpha}} = \left( \frac{p}{\alpha}\right)^{p/\alpha}e^{-\frac{p}{\alpha}} = \left(\frac{p}{\alpha e} \right)^{\frac{p}{\alpha}}.
    \]
    The first result follows.
    
    The second result holds because $\left(t^p e^{-t^{\alpha}} \right)' \le 0$ for $\alpha t^{\alpha} \ge p$. 
\end{proof}

\begin{lemma}
\label{lemma:bound_on_e_z}
    Let $\alpha, u, \sg > 0$.
    Then it holds for $z = z(u, \sg; \alpha)$ coming from~\eqref{eq:def_z} that
    \[
    e^{z^{\alpha}} \ge \frac{e^4}{16} \lr{\frac{u z}{\sg}}^2 \ge 3 \lr{\frac{u z}{\sg}}^2 .
    \]
\end{lemma}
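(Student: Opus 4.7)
The second inequality $\frac{e^4}{16} \ge 3$ is immediate from $e^4 > 48$. For the first, observe that $z^\alpha \ge 4$ in either branch of~\eqref{eq:def_z}, so $z > 0$ and taking logarithms reduces the claim to
\[
z^\alpha - \tfrac{2}{\alpha} \ln z^\alpha \;\ge\; 4 - 4\ln 2 + 2 \ln(u/\sigma).
\]
Set $s := \ln(u/\sigma)$ and note that both branches of~\eqref{eq:def_z} can be written uniformly as $z^\alpha = w_0 + 4 s_+$, where $w_0 := 4$ for $\alpha \ge 1$ and $w_0 := \tfrac{4}{\alpha} \ln \tfrac{e}{\alpha}$ for $\alpha < 1$. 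In both regimes one checks $\alpha w_0 \ge 4$: trivially if $\alpha \ge 1$, and because $\alpha w_0 = 4 \ln(e/\alpha) \ge 4$ when $\alpha \le 1$. Introduce
\[
F(s) \;:=\; (w_0 + 4 s_+) - \tfrac{2}{\alpha} \ln(w_0 + 4 s_+) - 2 s - 4 + 4 \ln 2,
\]
so the desired bound is $F(s) \ge 0$ for all $s \in \R$.

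The idea is to reduce everything to the single-point estimate $F(0) \ge 0$ and then use two elementary monotonicity steps. For $s \ge 0$ one has $F'(s) = 2 - \tfrac{8}{\alpha (w_0 + 4 s)}$, which is non-negative precisely because $\alpha w_0 \ge 4$; hence $F(s) \ge F(0)$ for $s \ge 0$. For $s \le 0$ the positive part vanishes, giving $F(s) = F(0) - 2 s \ge F(0)$. It therefore suffices to show $F(0) \ge 0$. For $\alpha \ge 1$ direct substitution yields $F(0) = 4 \ln 2 \,(1 - 1/\alpha) \ge 0$. For $\alpha < 1$, setting $\beta := 1/\alpha \ge 1$ and $L := 1 + \ln \beta$, a routine expansion of $4\beta L - 2\beta \ln(4 \beta L) - 4 + 4 \ln 2$ yields
\[
F(0) \;=\; 4 (\beta - 1)(1 - \ln 2) + 2 \beta \ln(\beta / L),
\]
and both summands are non-negative: the first because $\beta \ge 1$ and $\ln 2 < 1$; the second because $L = 1 + \ln \beta \le \beta$ by the classical inequality $\ln \beta \le \beta - 1$, so $\beta/L \ge 1$.

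The main technical step is the algebraic simplification of $F(0)$ in the regime $\alpha < 1$ into the manifestly non-negative form above; the remaining observations are one-line checks. No other obstacle is anticipated, and the case $\alpha = 1$ serves as a consistency check since both branches of $w_0$ give $w_0 = 4$ and $F(0) = 0$.
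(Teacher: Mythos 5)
Your proof is correct, and it takes a genuinely different route from the paper's. The paper writes $e^{z^{\alpha}}$ as the product of two square roots, bounds one factor by $\left(\frac{\alpha e}{4}\right)^{2/\alpha} z^{2}$ via the auxiliary estimate $t^{p} \le \left(\frac{p}{\alpha e}\right)^{p/\alpha} e^{t^{\alpha}}$ (Lemma~\ref{lemma:useful_bound}), and evaluates the other factor explicitly as $\left(\frac{e}{\min\{\alpha,1\}}\right)^{2/\alpha} A^{2}$ with $A = \max\{u/\sg, 1\}$; multiplying the two gives $\frac{e^4}{16}(Az)^2$ directly. You instead take logarithms, observe that both branches of \eqref{eq:def_z} collapse to the single formula $z^\alpha = w_0 + 4 s_+$ with $\alpha w_0 \ge 4$, and reduce everything to the one-point inequality $F(0) \ge 0$ by two monotonicity observations (the derivative sign for $s \ge 0$ hinging exactly on $\alpha w_0 \ge 4$, and the trivial shift for $s \le 0$). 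I verified your algebraic identity $F(0) = 4(\beta-1)(1-\ln 2) + 2\beta \ln(\beta/L)$ for $\alpha < 1$; it is correct, and the non-negativity of both summands follows as you say from $\ln\beta \le \beta - 1$. Your argument is self-contained (it does not invoke Lemma~\ref{lemma:useful_bound}) and makes transparent where the constant $\frac{e^4}{16}$ is tight (at $s = 0$, $\alpha = 1$, where $F(0) = 0$); the paper's argument is shorter on the page because it reuses machinery already needed elsewhere in the appendix.
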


\begin{proof}
    Let us define $A \eqset \max\lrc{\frac{u}{\sg},\, 1}$, so that
    \[
    z = \lrs{\frac{4}{\min\{\alpha, 1\}} \ln \frac{e}{\min\{\alpha, 1\}} + 4 \ln A}^{1/\alpha} .
    \]
    
    Note that by Lemma~\ref{lemma:useful_bound},
    \[
    e^{z^{\alpha}} \ge \left(\frac{\alpha e}{4}\right)^{4/\alpha} z^4 .
    \]
    
    First, consider the case $\alpha < 1$. Then
    \[
    e^{z^{\alpha}} = \elr{\frac{4}{\alpha} \ln \frac{e}{\alpha} + 4 \ln A} 
    = \left(\frac{e}{\alpha}\right)^{4 / \alpha} A^4 ,
    \]
    and combining two bounds, we get
    \[
    e^{z^{\alpha}} \ge \left(\frac{\alpha e}{4}\right)^{2/\alpha} z^2 \cdot \left(\frac{e}{\alpha}\right)^{2 / \alpha} A^2
    = \left(\frac{e^2}{4}\right)^{2/\alpha} \left(A z\right)^2 
    \ge \left(\frac{e^2}{4}\right)^{2} \left(A z\right)^2 
    = \frac{e^4}{16} \left(A z\right)^2 .
    \]

    Now, consider the case $\alpha \ge 1$:
    \[
    e^{z^{\alpha}} = \elr{4 + 4 \ln A} = e^4 A^4 ,
    \]
    and thus
    \[
    e^{z^{\alpha}} \ge \left(\frac{\alpha e}{4}\right)^{2/\alpha} z^2 \cdot e^2 A^2
    \ge \left(\frac{e}{4}\right)^{2} e^2 \left(A z\right)^2 
    = \frac{e^4}{16} \left(A z\right)^2 .
    \]
    The claim follows.
\end{proof}
 
\begin{lemma}
\label{lemma:bound_on_4th_moment}
    Fix $\alpha > 0$. 
    Let a random matrix $\X \in \H(d)$ be such that $u \eqset \norm[\big]{\norm{\X}}_{\pa} < \infty$ and $\sg^2 \eqset \lmax(\E \X^2)$. 
    Then for any $z > 0$ such that $\alpha z^\alpha \ge 4$
    \[
    \E \X^4 \preccurlyeq z^2 u^2 \lr{\E \X^2 + 2 z^2 u^2 e^{-z^\alpha} \Ii}.
    \]
    In particular, for $z = z(u, \sg; \alpha)$ defined by~\eqref{eq:def_z},
    \[
    \lmax(\E \X^4) \le \frac{5}{3} (\sg u z)^2.
    \]
\end{lemma}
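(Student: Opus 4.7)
The plan is to cut $\X$ at level $zu$ in operator norm and handle the two regimes separately. The starting point is the matrix inequality $\X^2 \preccurlyeq \norm{\X}^2 \Ii$, which gives $\X^4 \preccurlyeq \norm{\X}^2 \X^2$. Splitting according to whether $\norm{\X} \le zu$ or $\norm{\X} > zu$, I would write
\[
\E \X^4 \preccurlyeq z^2 u^2 \,\E\lrs{\X^2 \ind[\norm{\X} \le zu]} + \E\lrs{\norm{\X}^4 \ind[\norm{\X} > zu]} \Ii \preccurlyeq z^2 u^2 \,\E \X^2 + \E\lrs{\norm{\X}^4 \ind[\norm{\X} > zu]} \Ii .
\]

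Next I would bound the tail term using the Orlicz-norm assumption. By the definition of $\norm{\cdot}_{\pa}$, $\E \exp\lr{(\norm{\X}/u)^\alpha} \le 2$. The condition $\alpha z^\alpha \ge 4$ is exactly what triggers the monotonicity statement in Lemma~\ref{lemma:useful_bound} with $p = 4$: the function $s \mapsto s^4 e^{-s^\alpha}$ is non-increasing on $[z, \infty)$. Hence on the event $\{\norm{\X} > zu\}$,
\[
\norm{\X}^4 = u^4 \lr{\frac{\norm{\X}}{u}}^4 e^{-(\norm{\X}/u)^\alpha} e^{(\norm{\X}/u)^\alpha} \le z^4 u^4 e^{-z^\alpha} e^{(\norm{\X}/u)^\alpha} ,
\]
and taking expectations yields $\E\lrs{\norm{\X}^4 \ind[\norm{\X} > zu]} \le 2 z^4 u^4 e^{-z^\alpha}$. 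Plugging this back gives the first claim.

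For the specialization $z = z(u, \sg; \alpha)$, Lemma~\ref{lemma:bound_on_e_z} delivers $e^{z^\alpha} \ge 3 (uz/\sg)^2$, i.e., $2 z^2 u^2 e^{-z^\alpha} \le \tfrac{2}{3} \sg^2$. Taking $\lmax$ in the first bound and using $\lmax(\E \X^2) = \sg^2$ yields
\[
\lmax(\E \X^4) \le z^2 u^2 \lr{\sg^2 + \tfrac{2}{3}\sg^2} = \tfrac{5}{3}(\sg u z)^2.
\]

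There is no real obstacle here; the only point requiring care is checking that $\alpha z^\alpha \ge 4$ so that Lemma~\ref{lemma:useful_bound} applies to $s^4 e^{-s^\alpha}$, which is built into the hypothesis (and automatically holds for the $z$ defined in~\eqref{eq:def_z}, since that $z$ satisfies $z^\alpha \ge 4$ for $\alpha \ge 1$ and $\alpha z^\alpha \ge 4\ln(e/\alpha) \ge 4$ for $\alpha < 1$).
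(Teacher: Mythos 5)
Your proof is correct and follows essentially the same route as the paper: truncate at level $zu$, control the tail via the monotonicity of $s^4 e^{-s^\alpha}$ from Lemma~\ref{lemma:useful_bound} together with $\E e^{(\norm{\X}/u)^\alpha}\le 2$, and then invoke Lemma~\ref{lemma:bound_on_e_z} for the specialization. The only cosmetic difference is that the paper states the splitting as a scalar inequality $x^4 \le x^2 z^2 + z^4 e^{-z^\alpha} e^{|x|^\alpha}\ind[|x|\ge z]$ and transfers it to $\X$, whereas you truncate directly on the operator norm; your explicit check that $\alpha z^\alpha \ge 4$ holds for the $z$ of \eqref{eq:def_z} is a welcome addition the paper leaves implicit.
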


\begin{proof}
    W.l.o.g.\ we can assume $u = 1$. 
    Notice that
    \begin{align*}
        x^4 &\le x^2 z^2 + x^4 \ind[|x| \ge z] = 
        x^2 z^2 + x^4 e^{-|x|^{\alpha}} e^{|x|^{\alpha}} \ind[|x| \ge z] \\
        &\le x^2 z^2 + z^4 e^{-z^{\alpha}} e^{|x|^{\alpha}} \ind[|x| \ge z],
    \end{align*}
    where the last inequality holds due to Lemma~\ref{lemma:useful_bound} since $\alpha z^{\alpha} \ge 4$.
    Thus, we get
    \begin{equation}
        \E \X^4 \preccurlyeq z^2 \E \X^2 + z^4 e^{-z^{\alpha}} \E e^{\norm{\X}^{\alpha}} \Ii 
        \preccurlyeq z^2 (\E \X^2 + 2 z^2 e^{-z^{\alpha}} \Ii) . 
    \end{equation}
    Finally, for $z = z(u, \sg; \alpha)$, Lemma~\ref{lemma:bound_on_e_z} ensures 
    \[
    \lmax(\E \X^4) \le z^2 \sg^2 + 2 z^4 e^{-z^{\alpha}}
    \le z^2 \sg^2 + \frac{2}{3} z^4 \lr{\frac{\sg}{z}}^2 
    = \frac{5}{3} (\sg z)^2 .
    \]
\end{proof}

\begin{lemma}
\label{lemma:h_bound}
    The inverse function of $h(x) = (x + 1) \ln(x+1) - x$ satisfies
    \begin{equation}
    \label{eq:bound_h_inv}
        h^{-1}(u) \le \sqrt{2 u} + \frac{2u}{\logg 2u}.
    \end{equation}
\end{lemma}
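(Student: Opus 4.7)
The plan is to prove the equivalent statement $h(y) \ge u$ where $y \eqset \sqrt{2u} + \frac{2u}{\logg 2u}$, thereby reducing everything to a one-variable analysis via the convexity of $h$. Since $h'(x) = \ln(1+x)$ is increasing on $(-1, \infty)$, the function $h$ is convex, and the tangent-line (subgradient) inequality yields
\[
h(a+b) \ge h(a) + b \ln(1+a), \qquad a, b \ge 0 .
\]
Setting $a = \sqrt{2u}$ and $b = \frac{2u}{\logg 2u}$, the goal is reduced to proving
\[
h(\sqrt{2u}) + \frac{2u \ln(1+\sqrt{2u})}{\logg 2u} \ge u .
\]

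Next, I would split on the two branches of $\logg$. If $2u > e$, so $\logg 2u = \ln 2u$, then the trivial bound $1 + \sqrt{2u} \ge \sqrt{2u}$ gives $\ln(1+\sqrt{2u}) \ge \tfrac{1}{2} \ln 2u$; hence the second summand alone is already at least $u$, and the inequality follows from $h(\sqrt{2u}) \ge 0$. If $2u \le e$, so $\logg 2u = 1$, then writing $t \eqset \sqrt{2u} \in [0, \sqrt{e}]$ and $u = t^2/2$ reduces the task to showing $f(t) \eqset (1+t+t^2) \ln(1+t) - t - t^2/2 \ge 0$.

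For this last step---the only calculation worth tracking---I would check $f(0) = 0$ and differentiate once to get
\[
f'(t) = (1+2t) \ln(1+t) - \frac{t}{1+t} .
\]
Using the standard inequality $\ln(1+t) \ge t/(1+t)$ (immediate from $\int_0^t ds/(1+s) \ge t/(1+t)$) together with $1 + 2t \ge 1$, one obtains $f'(t) \ge 0$, whence $f(t) \ge 0$ on all of $[0, \infty)$ (and in particular on the interval $[0, \sqrt{e}]$ that we actually need). I do not anticipate a serious obstacle: the convexity estimate in the first step is precisely what decouples the two additive terms of the conjectured upper bound on $h^{-1}(u)$, after which the two regimes dictated by the piecewise definition of $\logg$ are handled by completely elementary one-variable arguments.
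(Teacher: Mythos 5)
Your proof is correct, and it takes a genuinely different route from the paper's. The paper splits at $u = e^6/2$: for $u \le e^6/2$ it invokes the standard bound $h^{-1}(u) \le \sqrt{2u} + u/3$ and observes that $\logg 2u < 6$ makes $\frac{2u}{\logg 2u} \ge \frac{u}{3}$, while for $u > e^6/2$ it verifies $h\bigl(\sqrt{2u} + \tfrac{2u}{\ln 2u}\bigr) \ge u$ directly by discarding lower-order terms and using concavity of the logarithm. You instead exploit convexity of $h$ via the tangent-line bound $h(a+b) \ge h(a) + b\ln(1+a)$, which cleanly decouples the two summands of the claimed majorant, and then split according to the two branches of $\logg$ (at $2u = e$ rather than $2u = e^6$). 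Your large-$u$ branch becomes a one-liner from $\ln(1+\sqrt{2u}) \ge \tfrac12 \ln 2u$, and your small-$u$ branch reduces to the elementary check $f(t) = (1+t+t^2)\ln(1+t) - t - t^2/2 \ge 0$, whose derivative computation $f'(t) = (1+2t)\ln(1+t) - \tfrac{t}{1+t} \ge 0$ I have verified. The upshot is that your argument is fully self-contained (it does not need to cite the classical $\sqrt{2u} + u/3$ bound, which your small-$u$ case effectively re-derives), whereas the paper's version leans on that known inequality to shorten the sub-threshold regime; both are elementary and correct.
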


\begin{proof}
    First, consider $0 \le u \le \frac{e^6}{2}$. Then
    \[
    h^{-1}(u) \le \sqrt{2u} + \frac{u}{3} \le \sqrt{2u} + \frac{2u}{\logg 2u} .
    \]
    The first inequality is well-known (see, e.g., Proposition~8 by \citet{sen2018gentle}), and the second trivially follows from the fact that $\logg 2 u < 6$.
    
    Now we consider $u > \frac{e^6}{2}$.
    Since $h(\cdot)$ is increasing, the goal is to check that
    \[
    h\lr{\sqrt{2u} + \frac{2u}{\logg 2u}} - u \geq 0.
    \]
    Notice that
    \[
    \ln \lr{\frac{e^6}{6} + e^3 + 1} \ge 1.
    \]
    Thus, using the definition of $h(\cdot)$ and the inequality $\logg 2u = \ln 2u \ge 6$, we get
    \begin{align*}
        &\lr{\frac{2u}{\ln 2u} + \sqrt{2u} + 1} \ln \lr{ \frac{2u}{\ln 2u} + \sqrt{2u} + 1  } - \frac{2u}{\ln 2u} - \sqrt{2u} - u\\
        &\ge \frac{2u}{\ln 2u} \ln\lr{\frac{2u}{\ln 2u}}  - \frac{2u}{\ln 2u} - u \\
        &= u \lr{ \frac{2}{\ln 2u}  \lr{\ln 2u - \ln\ln 2u}  - \frac{2}{\ln 2u} - 1} = u \lr{1 - \frac{2}{\ln 2u} \lr{ 1 + \ln\ln 2u}} .
    \end{align*}
    Due to the concavity of the logarithm,
    \[
    \ln x \le \ln a + \frac{x - a}{a} \quad \forall x, a > 0,
    \]
    and since $\ln 2 u \ge 6$, we get
    \begin{align*}
        \frac{2}{\ln 2 u} \lr{1 + \ln \ln 2u}
        \le \frac{2}{\ln 2 u} \lr{1 + \ln 6 + \frac{\ln 2 u - 6}{6}}
        = \frac{1}{3} + \frac{2 \ln 6}{\ln 2 u}
        \le \frac{1 + \ln 6}{3} < 1.
    \end{align*}
    The claim follows.
\end{proof}

\begin{lemma}
\label{lemma:bound_on_g_inv}
    Let
    \[
    g_{\lm_0}(t) \eqset \max_{0 \le \lm \le \lm_0}\{\lm t - \phi(\lm) \},
    \]
    and set $x_0 \eqset \lm_0 \phi'(\lm_0) - \phi(\lm_0)$. 
   
    Then, it holds that
    \[
    g^{-1}_{\lm_0}(x) =
    \begin{cases}
        h^{-1}(x), &~\text{if}~ x \le x_0,\\
        t_0 + \frac{x - x_0}{\lm_0} \le \frac{2}{\lm_0} x, &~\text{if}~ x > x_0.
    \end{cases}
    \]
\end{lemma}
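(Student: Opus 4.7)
The plan is to compute the constrained conjugate $g_{\lm_0}$ explicitly, identify the transition point, and then invert piecewise.

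First I would handle the unconstrained problem. Since $\phi(\lm) = e^\lm - 1 - \lm$, the function $\lm \mapsto \lm t - \phi(\lm)$ is concave with derivative $t - (e^\lm - 1)$, so the unique stationary point is $\lm^{\star} = \ln(1+t)$ (valid for $t > 0$, and extended by $\lm^{\star} = 0$ at $t = 0$). Direct substitution yields the value $(1+t)\ln(1+t) - t = h(t)$, i.e.\ the Legendre conjugate of $\phi$ on $[0,\infty)$ is exactly $h$. This stationary point lies in $[0, \lm_0]$ precisely when $t \le \phi'(\lm_0) =: t_0$.

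Next, a standard KKT/monotonicity argument splits $g_{\lm_0}$ into two regimes. For $t \le t_0$, the interior maximizer is admissible and $g_{\lm_0}(t) = h(t)$. For $t > t_0$, the map $\lm \mapsto \lm t - \phi(\lm)$ is strictly increasing on $[0, \lm_0]$ (its derivative at $\lm_0$ equals $t - \phi'(\lm_0) > 0$ and it is concave), so the maximum is attained at $\lm = \lm_0$ and $g_{\lm_0}(t) = \lm_0 t - \phi(\lm_0)$. Continuity at $t = t_0$ holds by construction: both expressions equal $x_0 = \lm_0 \phi'(\lm_0) - \phi(\lm_0)$. Since $h$ is a strictly increasing bijection of $[0,\infty)$ and the linear piece is strictly increasing with slope $\lm_0$, we immediately read off
\[
g_{\lm_0}^{-1}(x) = h^{-1}(x) \text{ for } x \le x_0, \qquad g_{\lm_0}^{-1}(x) = t_0 + \frac{x - x_0}{\lm_0} \text{ for } x > x_0.
\]

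The remaining step is the bound $t_0 + (x - x_0)/\lm_0 \le 2x/\lm_0$ for $x > x_0$. Multiplying by $\lm_0$, this is equivalent to $\lm_0 t_0 - x_0 \le x$, and by the definition of $x_0$ we compute $\lm_0 t_0 - x_0 = \phi(\lm_0)$. Thus it suffices to show $\phi(\lm_0) \le x_0$, equivalently $2\phi(\lm_0) \le \lm_0 \phi'(\lm_0)$. I expect this elementary inequality to be the main (small) obstacle. It can be proved by setting $f(\lm) \eqset \lm \phi'(\lm) - 2\phi(\lm) = (\lm - 2) e^\lm + \lm + 2$, noting $f(0) = 0$, $f'(\lm) = (\lm-1) e^\lm + 1$ with $f'(0) = 0$, and $f''(\lm) = \lm e^\lm \ge 0$ for $\lm \ge 0$, so $f' \ge 0$ and hence $f \ge 0$ on $[0, \infty)$. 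Combined with $x > x_0 \ge \phi(\lm_0)$, this gives the stated linear upper bound on $g_{\lm_0}^{-1}(x)$ and completes the proof.
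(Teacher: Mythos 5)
Your proposal is correct and follows essentially the same route as the paper: compute the unconstrained maximizer $\ln(1+t)$, split at $t_0 = \phi'(\lm_0)$, and reduce the linear bound to the inequality $2\phi(\lm_0) \le \lm_0 \phi'(\lm_0)$. The only (cosmetic) difference is that you verify this last inequality by direct differentiation of $(\lm-2)e^{\lm}+\lm+2$, whereas the paper invokes its Lemma~\ref{lemma:aux_increasing} on the monotonicity of $t\phi'(t)/\phi(t)$; both are valid.
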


\begin{proof}
    Notice that $\phi(\cdot)$ is strictly convex. 
    Thus $u(\lm) \eqset \lm t - \phi(\lm)$ is strictly concave and its $\max$ is unique. 
    Consider
    \[
    u'(\lm) = t - \phi'(\lm) = t + 1 - e^{\lm} = 0.
    \]
    Thus, the global $\max$ is attained at $\ln(t + 1)$. 
    
    Taking into account the condition $0 \le \lm \le \lm_0$, we get 
    \[
    g(t) = \lm^* t - \phi(\lm^*), 
    \quad
    \lm^* = \min\{\lm_0, \ln(t + 1) \}.
    \]
    The critical point is $t_0 = e^{\lm_0} - 1 = \phi'(\lm_0)$. 
    
    If $t \le t_0$,
    \[
    g(t) = t\ln(t+1) - \phi(\ln(t+1)) = h(t).
    \]
    Thus, for all $x \le x_0$, $x_0 \eqset h(t_0)$,
    \[
    g^{-1}(x) = h^{-1}(x).
    \]

    We also notice that substituting $t_0 = e^{\lm_0} - 1 = \phi'(\lm_0)$ to $h(t_0)$, one gets
    \[
    x_0 \eqset h(t_0) = \lm_0 \phi'(\lm_0) - \phi(\lm_0).
    \]
    
    Now consider $t > t_0$,
    \[
    g(t) = \lm_0 t - \phi(\lm_0).
    \]
    This yields
    \[
    g^{-1}_{\lm_0}(x) = \frac{\phi(\lm_0) + x}{\lm_0} 
    \le \frac{\phi(\lm_0) x / x_0 + x}{\lm_0} = \frac{\phi(\lm_0) + x_0}{x_0} \frac{x}{\lm_0}.
    \]

    Finally, notice that 
    \[
    \frac{x_0}{\phi(\lm_0) + x_0} = \frac{\lm_0 \phi'(\lm_0) - \phi(\lm_0)}{\lm_0 \phi'(\lm_0)} = 1 - \frac{\phi(\lm_0)}{\lm_0 \phi'(\lm_0)} \ge \frac{1}{2},
    \]
    the last inequality follows from the bound $\frac{\lm_0 \phi'(\lm_0)}{\phi(\lm_0)} \ge 2$ due to Lemma \ref{lemma:aux_increasing}.
\end{proof}

\section{Proof of Lemma~\ref{lemma:permutations}}
\label{sec:proof_permutations}

\begin{proof}[Proof of Lemma~\ref{lemma:permutations}]
    Let $Y' = (Y'_1, \dots, Y'_n)$ be an independent copy of $Y$ (all $Y'_i$ are i.i.d.). 
    First, we notice that
    \[
    \E_{Y} \left[g_{\pi(i)}(Y) | \F_{\pi([1, i-1])}\right] 
    = \E_{Y'} g_{\pi(i)}\left(Y_{\pi([1, i-1])}, Y'_{\pi([i, n])} \right).
    \]
    This yields
    \[
    \frac{1}{n!} \sum_{\pi\in\Pi_n} \sum^n_{i=1} \E_{Y}[g_{\pi(i)}(Y) | \F_{\pi([1, i-1])}] 
    = \E_{Y'} \frac{1}{n!} \sum^n_{i=1} \sum_{\pi\in\Pi_n} g_{\pi(i)}(Y_{\pi([1, i-1])}, Y'_{\pi([i, n])}).
    \]

    Second, we notice that for $j \in I \subset [n]$
    \[
    g_j(Y_{I}, Y'_{\bar{I}}) = g_{j}(Y_{I \setminus \{j\}}, Y'_{\bar{I}\bigcup \{j\} }),
    \]
    since $g_{j}(x_1, \dots, x_n)$ does not depend on $x_{j}$.
    Thus, we get for a fixed $i \in [n]$
    \begin{align*}
    \label{eq:just_a_bound}
         \sum_{\pi \in \Pi_n} g_{\pi(i)}\left(Y_{\pi([1, i])}, Y'_{\pi([i+1, n])} \right)
         &= (i-1)! (n-i)! \sum_{I \subset [n]:~|I|=i} \sum_{j \in I} g_j(Y_{I}, Y'_{\bar{I}}) \\
         & = (i-1)! (n-i)! \sum_{J \subset [n]:~|J| = i-1} \sum_{j \notin J} g_j(Y_{J}, Y'_{\bar{J}}).
    \end{align*}
    
    Now let $\alpha_i \eqset \frac{i}{n}$ for $i\in [n]$. 
    Combining the above results, we get
    \begin{align*}
        \frac{1}{n!} & \sum_{\pi \in \Pi_n} \sum^n_{i=1} \E \lrs{g_{\pi(i)}(Y) \middle| \F_{\pi([1, i-1])}} \\
        &= \frac{1}{n!} \sum^n_{i=1} (i-1)!(n-i)! \lr{\alpha_i \sum_{|I| = i} \sum_{j\in I} g_j(Y_{I}, Y'_{\bar{I}}) + (1-\alpha_i) \sum_{ |I| = i-1 } \sum_{j \neq I} g_j(Y_{I}, Y'_{\bar{I}})} \\
        & = \frac{1}{n!} \sum^n_{i=1} \alpha_i (i-1)!(n-i)! \sum_{|I|=i} \sum_{j\in I}g_j(Y_{I}, Y'_{\bar{I}}) + \frac{1}{n!} \sum^{n-1}_{i=0}(1-\alpha_{i+1}) i! (n-i-1)! \sum_{|I|=i}\sum_{j\notin I}g_j(Y_{I}, Y'_{\bar{I}}).
    \end{align*}
    
    Now we notice that
    \[
    \alpha_i (i-1)! (n-i)! = \frac{i! (n-i)!}{n}, 
    \quad
    (1 - \alpha_{i+1}) i! (n-i-1)! \le \frac{i! (n-i)!}{n} .
    \]
    Thus,
    \begin{align*}
        \frac{1}{n!} &\sum^n_{i=1} \alpha_i (i-1)!(n-i)! \sum_{|I|=i} \sum_{j \in I}g_j(Y_{I}, Y'_{\bar{I}}) + \frac{1}{n!} \sum^{n-1}_{i=0} (1-\alpha_{i+1}) i! (n-i-1)! \sum_{|I|=i} \sum_{j \notin I}g_j(Y_{I}, Y'_{\bar{I}}) \\
        &\le \sum^n_{i=1} \frac{i! (n-i)!}{n \cdot n!} \lr{\sum_{|I|=i} \sum_{j \in I} g_j(Y_{I}, Y'_{\bar{I}}) + \sum_{|I|=i} \sum_{j \notin I} g_j(Y_{I}, Y'_{\bar{I}})} \\
        &= \sum_{i=0}^n \frac{i! (n-i)!}{n \cdot n!} \sum_{|I|=i} \sum_{j=1}^n g_j(Y_{I}, Y'_{\bar{I}}).
    \end{align*}
    Recall that by the Lemma's condition it holds that $\sum_{j=1}^n g_j(Y_{I}, Y'_{\bar{I}}) \le M$ a.s. 
    Further, the number of subsets of cardinality $i$ is
    $|\{I \subset [n] : |I| = i\}| = \binom{n}{i} = \frac{n!}{i! (n-i)!}$. 
    Thus, we get
    \[
    \frac{1}{n!} \sum_{\pi \in \Pi} \sum^n_{i=1} \E \lrs{g_{\pi(i)}(Y) \middle| \F_{\pi([1, i-1])}}
    \le \sum_{i=0}^n \frac{i! (n-i)!}{n \cdot n!} \binom{n}{i} M = \frac{n+1}{n} M .
    \]
\end{proof}

\end{appendices}

\end{document}